\setlist[enumerate,1]{label = (\arabic*),itemsep=.5em, topsep=.5em}
\newenvironment{bsm}
{\left[\begin{smallmatrix}}
{\end{smallmatrix}\right]}
\numberwithin{equation}{subsection}
\newtheorem{thm}{Theorem}[subsection]
\newtheorem{prop}[thm]{Proposition}
\theoremstyle{remark}
\newtheorem{rmk}[thm]{Remark}
\newcommand{\mr}[1]{\mathrm{#1}}
\newcommand{\bA}{\mathbb{A}}
\newcommand{\bC}{\mathbb{C}}
\newcommand{\bI}{\mathbb{I}}
\newcommand{\bQ}{\mathbb{Q}}
\newcommand{\bS}{\mathbb{S}}
\newcommand{\bT}{\mathbb{T}}
\newcommand{\bU}{\mathbb{U}}
\newcommand{\bZ}{\mathbb{Z}}
\newcommand{\bba}{\mathbb{a}}
\newcommand{\bbb}{\mathbb{b}}
\newcommand{\bbc}{\mathbb{c}}
\newcommand{\cB}{\mathcal{B}}
\newcommand{\cC}{\mathcal{C}}
\newcommand{\cD}{\mathcal{D}}
\newcommand{\cE}{\mathcal{E}}
\newcommand{\cF}{\mathcal{F}}
\newcommand{\cH}{\mathcal{H}}
\newcommand{\cK}{\mathcal{K}}
\newcommand{\cL}{\mathcal{L}}
\newcommand{\cM}{\mathcal{M}}
\newcommand{\cN}{\mathcal{N}}
\newcommand{\cO}{\mathcal{O}}
\newcommand{\cP}{\mathcal{P}}
\newcommand{\cS}{\mathcal{S}}
\newcommand{\cW}{\mathcal{W}}
\newcommand{\fa}{\mathfrak{a}}
\newcommand{\fp}{\mathfrak{p}}
\newcommand{\fx}{\mathfrak{x}}
\newcommand{\fz}{\mathfrak{z}}
\newcommand{\fA}{\mathfrak{A}}
\newcommand{\fB}{\mathfrak{B}}
\newcommand{\fD}{\mathfrak{D}}
\newcommand{\sB}{\mathscr{B}}
\newcommand{\sC}{\mathscr{C}}
\newcommand{\sL}{\mathscr{L}}
\newcommand{\sS}{\mathscr{S}}
\newcommand{\sT}{\mathscr{T}}
\DeclareMathAlphabet{\mathpzc}{OT1}{pzc}{m}{it}
\newcommand{\pzV}{\mathpzc{V}}
\newcommand{\bfP}{\mathbf{P}}
\newcommand{\bbeta}{\bm{\beta}}
\newcommand{\utau}{\protect\underline{\tau}}
\DeclareMathOperator{\GL}{GL}
\DeclareMathOperator{\GSp}{GSp}
\DeclareMathOperator{\GU}{GU}
\DeclareMathOperator{\U}{U}
\DeclareMathOperator{\SL}{SL}
\DeclareMathOperator{\Sp}{Sp}
\newcommand{\cont}{\mathrm{cont}}
\newcommand{\diag}{\mathrm{diag}}
\newcommand{\Meas}{\mathpzc{M}eas}
\newcommand{\Nm}{\mathrm{Nm}}
\newcommand{\ord}{\mathrm{ord}}
\newcommand{\Tr}{\mathrm{Tr}}
\newcommand{\triv}{\mathrm{triv}}
\newcommand{\tor}{\mathrm{tor}}
\newcommand{\val}{\mathrm{val}}
\newcommand{\vol}{\mathrm{vol}}
\newcommand{\Sieg}{\mr{Sieg}}
\DeclareMathOperator{\Gal}{Gal}
\DeclareMathOperator{\Her}{Her}
\DeclareMathOperator{\Hom}{Hom}
\DeclareMathOperator{\Spec}{Spec}
\DeclareMathOperator{\Sym}{Sym}
\newcommand{\adic}{\text{-}\mathrm{adic}}
\newcommand{\qexp}{q\text{-}\mathrm{exp}}
\newcommand{\lhra}{\ensuremath{\lhook\joinrel\longrightarrow}}
\newcommand{\lra}{\longrightarrow}
\newcommand{\ra}{\rightarrow}
\newcommand{\hra}{\hookrightarrow}
\newcommand{\wh}{\widehat}
\newcommand{\wt}{\widetilde}
\newcommand{\llb}{\llbracket}
\newcommand{\rrb}{\rrbracket}
\newcommand{\bid}{\mathbf{1}}
\newcommand{\genlegendre}[4]{%
  \genfrac{(}{)}{}{#1}{#3}{#4}%
  \if\relax\detokenize{#2}\relax\else_{\!#2}\fi
}
\def\l@section{\@tocline{1}{0pt}{1pc}{}{}}
\def\l@subsection{\@tocline{2}{0pt}{1pc}{4.6em}{}}
\def\l@subsubsection{\@tocline{3}{0pt}{1pc}{7.6em}{}}
\renewcommand{\tocsection}[3]{%
  \indentlabel{\@ifnotempty{#2}{\makebox[2.3em][l]{%
    \ignorespaces#1 #2.\hfill}}}#3}
\renewcommand{\tocsubsection}[3]{%
  \indentlabel{\@ifnotempty{#2}{\hspace*{2.3em}\makebox[2.3em][l]{%
    \ignorespaces#1 #2.\hfill}}}#3}
\renewcommand{\tocsubsubsection}[3]{%
  \indentlabel{\@ifnotempty{#2}{\hspace*{4.6em}\makebox[3em][l]{%
    \ignorespaces#1 #2.\hfill}}}#3}
\newcommand{\mf}{f}
\newcommand{\dsec}{{\tt f}}
\newcommand{\Bes}{\cB}
\newcommand{\bes}{B}
\newcommand{\Whi}{\cW}
\newcommand{\whi}{W}
\newcommand{\alphaS}{\alpha_{\mathbb{S}}}
\newcommand{\alphabS}{\bar{\alpha}_{\mathbb{S}}}
\newcommand{\Schw}{\Phi}
\newcommand{\sPi}{\scriptscriptstyle{\Pi}}
\newcommand{\sLambda}{\scriptscriptstyle{\Lambda}}
\newcommand{\disc}{\mathrm{disc}}
\newcommand{\bLam}{\boldsymbol\Lambda}
\newcommand{\bUps}{\boldsymbol\Upsilon}
\newcommand{\bdot}{\boldsymbol{\cdot}}
\newcommand{\spe}{\mathrm{sp}}
\newcommand{\KL}{\mathrm{KL}}
\title{$p$-adic $L$-functions for $\GSp(4)\times\GL(2)$ II}
\author{Zheng Liu}
\address{University of California, Santa Barbara, , CA, United States}\email{\href{mailto:zliu@math.ucsb.edu}{zliu@math.ucsb.edu}}
\begin{document}

\maketitle

\begin{abstract}
We construct four-variable $p$-adic $L$-functions for cuspidal Hida families on $\GSp(4)\times\GL(2)$ and prove a complete interpolation formula. The  archimedean zeta integrals are computed by using a partial interpolation formula for the four-variable $p$-adic $L$-functions and some previously constructed $p$-adic $L$-functions.
\end{abstract}

\tableofcontents 
\numberwithin{equation}{subsection}

\section{Introduction}

In this paper, we generalize the construction of the cyclotomic-variable $p$-adic $L$-function for $\GSp(4)\times\GL(2)$ in \cite{pFL} to construct four-variable $p$-adic $L$-functions for Hida families on $\GSp(4)$ and $\GL(2)$, and complete the interpolation formula in {\it loc.cit} by calculating the archimedean integrals via $p$-adic interpolations. 

Fix an odd prime $p$ and an isomorphism $\bar{\bQ}_p\cong\bC$. Let $F$ be a sufficiently large finite extension of $\bQ_p$ and $\cO$ be its ring of integers. For $G=\GL(2),\GSp(4)$, let $\Lambda_G$  be the Iwasawa algebra for $G$ over $\cO$ defined in \eqref{eq:Iw-alg}, and  $\bT_{G,\ord}$ be the Hecke algebra acting on the $\Lambda_G$-module of Hida families on $G$ of tame level $K^p_G$  (chosen as in \S\ref{sec:setup}). Given geometrically irreducible components 
\begin{align*}
   \sC_1&\subset \Spec(\bT_{\GL(2),\ord}),
   &\sC_2&\subset\Spec(\bT_{\GSp(4),\ord}),
\end{align*} 
denote by $\bI_{\sC_1},\bI_{\sC_2}$ their coordinate rings and by $F_{\sC_1},F_{\sC_2}$ their functions fields. We construct the (imprimitive) $p$-adic $L$-function for $\sC_1,\sC_2$ and verify its full interpolation properties as predicted by Coates and Perrin-Riou \cite{CoPerrin,Coates} when the weight $l$ of the specialization of $\sC_1$ and the weight $(l_1,l_2)$ of the specialization of $\sC_2$ belong to the region
\begin{equation}\label{eq:wt-region}
    \frac{\min\{-l_1+l_2+l,\,l_1+l_2-l\}}{2}\geq 3,
\end{equation}
(which is the region (D) in the convention of \cite{LoRi}).

\begin{thm}\label{thm:main}
Given Hida families $\sC_1,\sC_2$ on $\GL(2),\GSp(4)$ and  the auxiliary data: 
\begin{enumerate}
\item[--] $\bS=\begin{bmatrix}\bba&\frac{\bbb}{2}\\ \frac{\bbb}{2}&\bbc\end{bmatrix}\in\Sym_2(\bQ)_{>0}$,
\item[--] a $p$-adically continuous Hecke character $\bLam:\cK^\times\backslash\bA^\times_{\cK,f}\ra \Lambda^\times_{\GSp(4)}$ with $\cK=\bQ(\sqrt{-\det\bS})$, such that $\bLam|_{\bA^\times_\bQ}=\omega_{\sC_2}$, the central character associated to $\sC_2$,
\item[--] a finite set $S$ of places of $\bQ$ containing $p,\infty$ such that everything is unramified outside $S$, (see \S\ref{sec:setup} for the precise condition on $S$),
\item[--] an open subgroup $U^p$ of $\hat{\bZ}^{p,\times}$ containing $\prod_{v\notin S} \bZ^\times_v$,
\end{enumerate}
taking $\beta_1\in\bQ_{>0},\beta_2\in\Sym_2(\bQ)_{>0}$, there exists a four-variable $p$-adic $L$-function
\begin{align*}
   \cL^S_{\sC_1,\sC_2,\beta_1,\beta_2}\in &\,\Meas\left(\bQ^\times\backslash\bA^\times_{\bQ,f}/U^p,\bI_{\sC_1}\wh{\otimes}\bI_{\sC_2}\right)\otimes_{\bI_{\sC_1}\wh{\otimes}\bI_{\sC_2}} (F_{\sC_1}\wh{\otimes}F_{\sC_2})\\
   &\,\cong (\bI_{\sC_1}\wh{\otimes}\bI_{\sC_2})\llb \bQ^\times\backslash\bA^\times_{\bQ,f}/U^p\rrb) \otimes_{\bI_{\sC_1}\wh{\otimes}\bI_{\sC_2}} (F_{\sC_1}\wh{\otimes}F_{\sC_2})
\end{align*}
satisfying the interpolation property: 
\begin{align*}
    \cL^S_{\sC_1,\sC_2,\beta_1,\beta_2}(\kappa,x)
    =&\,2^{-l-l_1-l_2}i^l\sum_{f\in \sS_{\GL(2),x}} \frac{f_{\bbc}f_{\beta_1}}{\bfP(f,f)}\sum_{\varphi\in \sS_{\GSp(4)},x} \frac{\bes^\dagger_{\bS,\Lambda}\left(\varphi\right)\varphi_{\beta_2}}{\bfP(\varphi,\varphi)} \\
   &\times E_\infty \Big(k+\frac{l+l_1+l_2}{2},\Pi_x\times\pi_x\times \chi\Big) E_p\Big(k+\frac{l+l_1+l_2}{2},\Pi_x\times\pi_x\times \chi\Big)\\
   &\times L^S\Big(k+\frac{l+l_1+l_2}{2},\Pi_x\times\pi_x\times \chi\Big),
\end{align*}
where
\begin{enumerate}[leftmargin=2em]
\item[--] $x\in \sC_1(\bar{\bQ}_p)\times\sC_2(\bar{\bQ}_p)$ is a point at which the weight projection map $\Lambda_{\GL(2)}\hat{\otimes}_\cO \Lambda_{\GSp(4)}\ra \bT_{\GL(2),\ord}\hat{\otimes}_\cO\bT_{\GSp(4),\ord}$ is \'{e}tale with arithmetic image
\[
    (\tau,(\tau_1,\tau_2))=((l,\xi),(l_1,l_2,\xi_1,\xi_2))
    \in \Hom_\cont\left(T^1_{\GL(2)}(\bZ_p)\times T^1_{\GSp(4)}(\bZ_p),\bar{\bQ}^\times_p\right),
\]
and $\kappa=(k,\chi)$ is an arithmetic point in $\Hom_\cont\left(\bQ^\times\backslash\bA^\times_{\bQ,f}/U^p,\bar{\bQ}^\times_p\right)$, (see \S\ref{sec:pchar} for some of the notations), such that 
\begin{equation}\label{eq:kl}
  -\frac{\min\{-l_1+l_2+l,\,l_1+l_2-l\}}{2}+2 \leq k+\frac{l_1+l_2+l}{2}\leq \frac{\min\{-l_1+l_2+l,\,l_1+l_2-l\}}{2}-1,
\end{equation}
(when $\min\{-l_1+l_2+l,\,l_1+l_2-l\}\geq 3$, $s=k+\frac{l+l_1+l_2}{2}$ for such $k$'s are all the critical points for the $L$-function $L(s,\Pi_x\times\pi_x\times\chi)$),
\item[--] $\sS_{\GL(2),x}$ (resp. $\sS_{\GSp(4),x})$ is an orthogonal basis  of the space spanned by ordinary cuspidal holomorphic forms on $\GL(2)$ of weight $l$ and tame level $K^p_{\GL(2)}$ (resp. $\GSp(4)$ of weight $(l_1,l_2)$ and tame level $K^p_{\GSp(4)}$) with nebentypus at $p$ given by \eqref{eq:pneben}, belonging to the Hecke eigenspace parameterized by $x$, 
\item[--] $\pi_x$ (resp. $\Pi_x$) is any unitary cuspidal irreducible automorphic representation of $\GL(2,\bA_\bQ)$ (resp. $\GSp(4,\bA_\bQ)$) inside the representation generated by $\sS_{\GL(2),x}$ (resp. $\sS_{\GSp(4),x}$) twisted by a real power of $|\det|$, 
\item[--] the factors $E_p,E_\infty$ are the modified Euler factor for $p$-adic interpolation (see \eqref{eq:Ep}\eqref{eq:Einf} for the precise formula),
\item[--] $f_\bbc,f_{\beta_1}$ (resp. $\varphi_{\beta_2}$) denotes the Fourier coefficient of $f$ indexed by $\bbc,\beta_1$ (resp. of $\varphi$ indexed by $\beta_2$), 
\item[--] $\Lambda$ is the classical Hecke character corresponding to the specialization of $\bLam$ at $(\tau_1,\tau_2)$, and $\bes^\dagger_{\bS,\Lambda}(\varphi)$ is the Bessel period with a modification at $p$ (see \eqref{eq:bes-dagger} and \cite[\S2.2.1]{pFL} for the precise definition).
\end{enumerate}
\end{thm}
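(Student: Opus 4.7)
The plan is to generalize the cyclotomic-variable construction of \cite{pFL} to the full Hida-theoretic setting and then to pin down the archimedean zeta integral indirectly by a $p$-adic bootstrap, rather than by direct computation. The starting point is a Furusawa-type integral representation of the form
\[
 I(s,\varphi,\mf,\chi)=\int \mathrm{Eis}(s,\iota(g_1,g_2))\,\bes^\dagger_{\bS,\Lambda}(\varphi)(g_1)\,\mf(g_2)\,\chi(\det g_2)\,dg_1\,dg_2,
\]
where $\mathrm{Eis}$ is a Siegel-type Eisenstein series on an auxiliary group (of type $\GU(3,3)$ or similar) pulled back along a diagonal embedding $\iota$ into $\GSp(4)\times\GU(1,1)$, and which unfolds formally to $L^S\bigl(s+\frac{l+l_1+l_2}{2},\Pi\times\pi\times\chi\bigr)$ times local zeta integrals.

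First I would construct a $(\Lambda_{\GSp(4)}\hat{\otimes}\Lambda_{\GL(2)}\hat{\otimes}\cO\llb\bQ^\times\backslash\bA^\times_{\bQ,f}/U^p\rrb)$-adic family of Siegel Eisenstein sections whose $q$-expansion along the Siegel parabolic is manifestly $p$-adically analytic. The section at $p$ should be the ordinary choice that realizes, upon specialization, the modified Euler factor $E_p$ of \eqref{eq:Ep}; the sections away from $S$ are spherical, so that the corresponding local zeta integrals account for the missing Euler factors between the primitive $L$ and $L^S$. Pulling back along $\iota$ and pairing with the duals of $\sC_1,\sC_2$ under Hida's perfect pairing (which is available after inverting enough elements because the weight projection is \'etale at $x$), then applying the ordinary projector, yields the candidate measure $\cL^S_{\sC_1,\sC_2,\beta_1,\beta_2}$ in the stated coefficient ring. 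The Fourier-coefficient factors $f_{\bbc}f_{\beta_1}$ and $\varphi_{\beta_2}$, together with the Petersson-norm denominators $\bfP(f,f)$ and $\bfP(\varphi,\varphi)$, arise naturally from expanding the Bessel period and the $\GL(2)$ Fourier coefficient against the pullback of $\mathrm{Eis}$ and from the self-duality of the Hida pairing.

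The hard part is to identify the archimedean constant of proportionality with the predicted $E_\infty$. A direct computation of the archimedean local integrals over the full region \eqref{eq:wt-region} is delicate, since one must combine the archimedean Bessel functional, the pullback of a vector-valued Siegel section, and a holomorphic projection, leading to elaborate ratios of Gamma functions. At this stage, the construction gives only a partial interpolation formula in which the archimedean contribution is an a priori unknown function $C_\infty(k,l,l_1,l_2)$ depending solely on the weights and on $k$. To determine $C_\infty$, I would specialize the Hida families to individual classical eigenforms, reducing to a one-variable (cyclotomic) measure, and compare with the $p$-adic $L$-function already built in \cite{pFL}, whose archimedean factor is computable at sufficiently many critical points. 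The range of critical specializations allowed by \eqref{eq:kl}, together with the $p$-adic analyticity of $C_\infty$ in the weights and the explicit Gamma-factor form of $E_\infty$, then forces $C_\infty=E_\infty$ identically. This ``partial interpolation plus comparison'' step is the technical heart of the theorem and is precisely what the abstract refers to when it says that the archimedean zeta integrals are computed by $p$-adic interpolation.
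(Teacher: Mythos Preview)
Your construction of the Eisenstein measure and the application of Hida idempotents to obtain a four-variable measure with an interpolation formula involving an unknown archimedean factor $C_\infty(k,l,l_1,l_2)$ is essentially correct and matches the paper (this is the content of Theorems~\ref{thm:EisF} and~\ref{thm:pFL1}).

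The gap is in your proposed determination of $C_\infty$. Specializing the Hida families and comparing with the one-variable $p$-adic $L$-function of \cite{pFL} is circular: that $p$-adic $L$-function is built from the \emph{same} Furusawa integral with the \emph{same} archimedean section, so its interpolation formula contains the same uncomputed integral $I_\infty(k,\cD_{l_1,l_2},\cD_l,\Lambda_\infty)$. Indeed, the present paper \emph{completes} the interpolation formula of \cite{pFL} (see the remark after the proof of Theorem~\ref{thm:main}), not the other way around. The only case in which $I_\infty$ was previously known explicitly is the scalar-weight locus $l=l_1=l_2$; but this locus is a proper closed subvariety of the three-dimensional weight space and hence is not Zariski dense, so even granting $p$-adic analyticity of the ratio $C_\infty/E_\infty$ (which is itself not automatic), the values there do not determine the function on the whole weight space.

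The paper instead determines $I_\infty$ by a genuinely different comparison: one takes $\sC_2=\theta(\sB,\sB')$ to be a Hida family of Yoshida lifts, so that the degree-$8$ $L$-function factors as a product of two Rankin--Selberg $L$-functions for $\GL(2)\times\GL(2)$. For these, Hida's three-variable $p$-adic $L$-functions (Theorem~\ref{thm:Rankin}) have \emph{complete} interpolation formulas with known archimedean factors. The remaining obstacle is to match the periods $\bfP(\varphi,\varphi)$ on the $\GSp(4)$ side with the $\GL(2)$ periods $\bfP(h_x,h_x)$; this is achieved not by an inner-product formula for Yoshida lifts (which is unavailable in the required generality at $p$), but by invoking the standard $p$-adic $L$-function for $\Sp(4)$ from \cite{SLF} and the Kubota--Leopoldt $p$-adic $L$-function (Propositions~\ref{prop:modpLf}, \ref{prop:intBes}, \ref{prop:compare}). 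Evaluating the resulting identity of $p$-adic measures at Zariski-dense classical points yields the explicit formula~\eqref{eq:arch-zeta} for $I_\infty$ at \emph{all} weights in the region~\eqref{eq:kl}.
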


We make some remarks on comparisons between our results and previous works on critical values of degree $8$ $L$-functions for $\GSp(4)\times\GL(2)$. There have been constructions of $p$-adic $L$-functions of one and three variables. For weights in the region \eqref{eq:wt-region} considered in this paper, a one-variable $p$-adic $L$-function ($l=l_l=l_2=-k-1$) is constructed in \cite{Agarwal}, and a three-variable $p$-adic $L$-function ($k+\frac{l_1+l_2+l}{2}=\frac{\min\{-l_1+l_2+l,\,l_1+l_2-l\}}{2}-1$) is constructed in \cite{LoRi}, and a one-variable cyclotomic $p$-adic $L$-function is constructed in \cite{pFL}. For weights in a different region where $-l_1+l_2+l\leq 1$, a one-variable cyclotomic $p$-adic $L$-function is constructed in \cite{LPSZ} and a three-variable $p$-adic $L$-function is constructed in \cite{LoZe}. (In all the previous constructions, the interpolations formulas are less complete than the one we prove in Theorem~\ref{thm:main}. They include unramified conditions at $p$ or conditions at ramified places away from $p$ or uncomputed local zeta integrals.) The constructions in \cite{Agarwal,pFL} start with the same automorphic integral as we utilize in this paper, {\it i.e.} Furusawa's formula recalled in \S\ref{sec:Furusawa}. The constructions in \cite{LPSZ,LoZe,LoRi} start with a different automorphic integral involving globally generic (non-holomorphic) automorphic forms on $\GSp(4)$ and Eisenstein series on $\GL(2)$. The constructions in  \cite{LPSZ,LoZe,LoRi} are motivated by studying Euler systems for $\GSp(4)\times\GL(2)$ constructed from Siegel units for modular curves. One major motivation for our construction is studying congruences between Yoshida lifts and other cuspidal automorphic representations on $\GSp(4)$.

Computations of local zeta integrals for Furusawa's formula are crucial for deducing algebraicity results on critical values for $L(s,\Pi\times\pi)$ and have been extensively studied  \cite{Furusawa,PiSch,Saha,Pitale,Saha-pullback,Morimoto-GSp4GL2-I,Morimoto-GSp4GL2-II}. Before \cite{Morimoto-GSp4GL2-II}, the arhcimedean zeta integrals have only been computed for holomorphic discrete series $\Pi_\infty$ of scalar weights. For general vector weights, the computation achieved in \cite{Morimoto-GSp4GL2-II} is up to $\bQ^\times$. Here, by utilizing the four-variable $p$-adic $L$-function for Hida families of Yoshida lifts and several previously constructed $p$-adic $L$-functions for Hida families (more precisely the Kubota--Leopoldt $p$-adic $L$-function \cite{KuLeo}, Rankin--Selberg $p$-adic $L$-function \cite{Hi88} and standard $p$-adic $L$-function for $\Sp(4)$ \cite{SLF}), we obtain a quantative result \eqref{eq:arch-zeta} on computing the archimdean zeta integrals when $\Pi_\infty$ is a holomorphic discrete series of general vector weights (for particular choices of test sections). (See the explanations of notations in Theorem~\ref{thm:pFL1} and \eqref{eq:Einf} for some notations in \eqref{eq:arch-zeta}.)

When carrying out the strategy of computing the archimedean zeta integrals via a comparison between the four-variable $p$-adic $L$-functions for the Yoshida lifts of two Hida families on $\GL(2)$, which interpolate special values of a product of Rankin--Selberg $L$-functions, and Hida's Rankin--Selberg $p$-adic $L$-functions, the key step is to compare the periods appearing in the interpolation formulas. The Petersson norm of the Yoshida lift is involved. One possible approach is to rewrite this Petersson norm in terms of the value at $s=1$ of the Rankin--Selberg $L$-function for the two modular forms used for the Yoshida lift. However, for our purpose, this approach requires precise formulas relating the Petersson norm and the Rankin--Selberg $L$-value when the components at $p$ of the automorphic representations of $\GL(2,\bA_\bQ)$  are principal series induced from sufficiently ramified characters, which are not currently available. (The cases treated in \cite{HsiehNamiInner} are those with  the component at $p$ unramified or Steinberg.)  We bypass this difficulty by employing an alternative approach making use of the $p$-adic standard $L$-functions for Hida families on $\Sp(4)$ constructed in \cite{SLF}.\\
 
\noindent{\bf Acknowledgement.} The author would like to thank Ming-Lun Hsieh for suggesting considering Furusawa's automorphic integral for $\GSp(4)\times\GL(2)$. During the preparation of this paper, the author was partially supported by the NSF grant DMS-2001527. 

\section{Notation and review of Hida theory and Furusawa's formula}

\subsection{Notation}

We fix an odd prime number $p$, an isomorphism $\bar{\bQ}_p\cong\bC$, and a sufficiently large  finite extension $F$ of $\bQ_p$. Denote by $\cO$ the ring of integers of $F$. 

We use $v$ to denote a place of $\bQ$. We fix the additive character 
\[
   \psi_{\bA_\bQ}=\bigotimes_v\psi_v:\bQ\backslash\bA\ra\bC^\times,
   \qquad \psi_v(x)=\left\{\begin{array}{ll}e^{-2\pi i\{x\}_v},& v\neq\infty\\e^{2\pi ix},&v=\infty\end{array},\right.
\] 
where $\{x\}_v$ is the fractional part of $x$.

Given a positive integer $n$,  define the algebraic group $\GSp(2n)$ over $\bZ$ as
\begin{align*}
   \GSp(2n,R)=\left\{g\in\GL(2n,R):\ltrans{g}\begin{bmatrix}0&\bid_n\\-\bid_n&0\end{bmatrix}g=\nu_g\begin{bmatrix}0&\bid_n\\-\bid_n&0\end{bmatrix}, \,\nu_g\in R^\times\right\}
\end{align*}
for all $\bZ$-algebra $R$. Given an imaginary quadratic field $\cK$, define the algebraic group $\GU(n,n)$ over $\bZ$ as
\begin{align*}
   \GU(n,n)(R)=\left\{g\in\GL(2n,\cO_\cK\otimes R):\ltrans{\bar{g}}\begin{bmatrix}0&\bid_n\\-\bid_n&0\end{bmatrix}g=\nu_g\begin{bmatrix}0&\bid_n\\-\bid_n&0\end{bmatrix}, \,\nu_g\in R^\times\right\},
\end{align*}
where for $\alpha\in\cK$, $\bar{\alpha}$ denotes its image under the nontrivial element in $\Gal(\cK/\bQ)$. In this paper, we will work with $\GSp(4),\GSp(2)=\GL(2),\GU(3,3),\GU(1,1)$. 

Fix the following maximal torus of $\GSp(2n)$, $\GU(n,n)$:
\begin{align*}
   T_{\GSp(2n)}&=\left\{\diag(a_1,\cdots,a_n,\nu a^{-1}_1,\cdots,\nu a^{-1}_n)\in\GSp(2n)\right\},\\
   T_{\GU(n,n)}&=\left\{\diag(\fa_1,\cdots,\fa_n,\nu \bar{\fa}^{-1}_1,\cdots,\nu \bar{\fa}^{-1}_n)\in\GU(n,n)\right\},
\end{align*}
and Siegel parabolic subgroup
\begin{align*}
   Q_{\GSp(2n)}&=\left\{\begin{bmatrix} A&B\\0&\nu \ltrans{A}^{-1}\end{bmatrix}\in \GSp(2n)\right\},
   &Q_{\GU(n,n)}&=\left\{\begin{bmatrix} \fA&\fB\\0&\nu \ltrans{\bar{\fA}}^{-1}\end{bmatrix}\in \GU(n,n)\right\}.
\end{align*}
Denote by $M_{\GSp(2n)}\subset Q_{\GSp(2n)},M_{\GU(n,n)}\subset Q_{\GU(n,n)}$ the Levi subgroup. Let
\begin{align*}
   T^1_{\GSp(2n)}&=T_{\GSp(2n)}\cap \Sp(2n),
   &T^1_{\GU(n,n)}&=T_{\GU(n,n)}\cap \U(n,n),\\
   M^1_{\GSp(2n)}&=M_{\GSp(2n)}\cap \Sp(2n),
   &M^1_{\GU(n,n)}&=M_{\GU(n,n)}\cap \U(n,n),
\end{align*}
where $\Sp(2n)$ (resp. $\U(n,n)$) is the subgroup of $\GSp(2n)$ (resp. $\GU(n,n)$) consisting of elements with similitude $1$. We identify $M^1_{\GSp(2n)},M^1_{\GU(n,n)}$ with $\GL(n),\GL(n)_{/\cK}$ via
\begin{align*}
   A&\longmapsto \begin{bmatrix} A\\&\ltrans{A}^{-1}\end{bmatrix},
   &\fA&\longmapsto \begin{bmatrix} \fA\\&\ltrans{\bar{\fA}}^{-1}\end{bmatrix}.
\end{align*}
Denote by 
\begin{align*}
U_{M^1_{\GSp(2n)}}&\subset M^1_{\GU(n,n)},
&U_{M^1_{\GU(n,n)}}&\subset M^1_{\GU(n,n)}
\end{align*} 
the subgroup whose elements are upper triangular with diagonal entries being $1$ under the above identification. Let
\begin{align*}
   U_{\GSp(2n)}&=\left\{\begin{bmatrix} A&B\\0&\nu \ltrans{A}^{-1}\end{bmatrix}\in Q_{\GSp(2n)}:A\in U_{M^1_{\GSp(2n)}}\right\},\\
   U_{\GU(n,n)}&=\left\{\begin{bmatrix} \fA&\fB\\0&\nu \ltrans{\bar{\fA}}^{-1}\end{bmatrix}\in Q_{\GU(n,n)}:\fA\in U_{M^1_{\GU(n,n}}\right\}.
\end{align*}

Given a character $\theta:\bQ^\times_p\ra \bC^\times$ or a character $\Theta:\cK^\times_p\ra\bC^\times$, we let 
\begin{align}
  \label{eq:char-circ} \theta^\circ&=\mathds{1}_{\bZ^\times_p}\cdot \theta,
   &\Theta^\circ&=\mathds{1}_{\cO^\times_{\cK,p}}\cdot \Theta.
\end{align}
Given a Hecke character $\Theta:\cK^\times\backslash\bA^\times_\cK\ra\bC^\times$, we let
\[
   \Theta_\bQ=\Theta|_{\bA^\times_\bQ}.
\]

\subsection{Review of Hida theory}\label{sec:Hida}
We recall some constructions and results in Hida theory for symplectic groups. (We will use it for $\GSp(4)$ and $\GSp(2)=\GL(2)$.) See \cite{HidaCon} or \cite[\S6.2]{SLF} for details.

For $G=\GSp(2n)$, define the Iwasawa algebra
\begin{align}
   \label{eq:Iw-alg}\wt{\Lambda}_G&=\cO\llb T^1_G(\bZ_p)\rrb,
   &\Lambda_G&=\cO\llb T^1_G(1+p\bZ_p)\rrb\cong\cO\llb T_1,T_2,\cdots,T_n\rrb.
\end{align}
Fix a neat open compact subgroup $K^p_G\subset G(\bA^p_{\bQ,f})$. Let $Y_G$ denote the Shimura variety for $G$ of level $K^p_G G(\bZ_p)$ defined over $\cO$. Let $\sT_{G,l,m}$ denote the $l$-th layer of the Igusa tower over $\bZ/p^m\bZ$, which is an $M^1_G(\bZ/p^l)$-\'{e}tale cover of the ordinary locus $Y_{G,\ord}$, and $\sT^\tor_{G,l,m}$ be a smooth partial toroidal compactification of $\sT_{G,l,m}$ (with respect to a chosen polyhedral cone decomposition) with boundary $C$. Put
\[
   V_{G,l,m}=H^0\left(\sT^\tor_{G,l,m},\cO_{\sT_{G,l,m}}(-C)\right)^{U_{M^1_G(\bZ_p)}},
\]
and 
\begin{align*}
   V_G&=\varprojlim_m\varinjlim_l V_{G,l,m},
   &\pzV_G&=\varinjlim_m\varinjlim_l V_{G,l,m}.
\end{align*}
The group $T^1_G(\bZ_p)$ naturally acts on these spaces, and they are all naturally modules over $\wt{\Lambda}_G$ and $\Lambda_G$.

For a tuple of integers $\underline{m}:m_1\geq m_2\geq\cdots\geq m_0\geq 0$ and $\nu\geq 0$ (corresponding to $\diag(p^{m_1+m_0},\dots,p^{m_n+m_0},p^{-m_1},\dots,p^{-m_n})$), an operator $U^G_{p,\underline{m},m_0}$ on $V_{G,l,m}$ is defined. We call these operators $\bU_p$-operators. Put $U^G_p=U^G_{p,\underline{m},0}$ with $\underline{m}=(n,n-1,\dots,1)$. As operators on $V_{G,l,m}$, the limit
\[
    e^G_\ord=\lim_{r\to \infty} \left(U^G_p\right)^{r!} 
\]
exists. Let
\begin{align*}
   V_{G,\ord}&= e^G_\ord V_G,
   &\pzV_{G,\ord}&= e^G_\ord \pzV_G,
   &\pzV^*_{G,\ord}&=\Hom(\pzV_{G,\ord},F/\cO).
\end{align*}
The $\tilde{\Lambda}_G$-module of Hida families of cuspidal $p$-adic automorphic forms on $G$ of tame level $K^p$ is defined to be
\[
   \cM_{G,\ord}=\Hom_{\wt{\Lambda}_G}(\pzV^*_G,\wt{\Lambda}_G).
\]
For each $p$-adic weight $\underline{\tau}\in \Hom_{\cont}\left(T^1_G(\bZ_p),\bar{\bQ}^\times_p\right)$, there is a natural map
\begin{equation}\label{eq:embd}
\begin{aligned}
     V_{G,\ord}[\utau]
     &\lra \Hom_{\cO}\left(\Hom(V_{G,\ord},\cO)/\cP_{\utau},\wt{\Lambda}_G/\cP_{\utau}\right)\\
     &\lra \Hom_{\cO}\left(\Hom(\pzV_{G,\ord},F/\cO)/\cP_{\utau},\wt{\Lambda}_G/\cP_{\utau}\right) \lra  \cM_{G,\ord}\otimes_{\wt{\Lambda}_G}  \wt{\Lambda}_G/\cP_{\utau},
\end{aligned}
\end{equation}
where $\cP_{\utau}$ is the prime ideal of $\wt{\Lambda}_G$ corresponding to $\utau$.

\begin{thm}
$\cM_{G,\ord}$ is free over $\Lambda_{G}$ of finite rank. The map \eqref{eq:embd} is an isomorphism. If $\utau$ is algebraic and sufficiently regular, then
\[
   V_{G,\ord}[\utau]=e^G_\ord H^0\left(Y^\tor_G,\omega_{\utau}(-C)\right),
\]
where $\omega_{\utau}$ is the automorphic vector bundle of weight $\utau$ over a toroidal compactification of $Y_G$. (The right hand side can be identified with the space of classical holomorphic automorphic forms on $G$ of weight $\utau$.)
\end{thm}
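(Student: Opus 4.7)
The plan is to follow the standard strategy for vertical control in Hida theory, as pioneered by Hida for $\GL(2)$ and extended to symplectic groups in work of Hida, Tilouine--Urban, and as reformulated in the author's previous paper \cite{SLF}. All three conclusions---freeness of $\cM_{G,\ord}$ over $\Lambda_G$, the interpolation isomorphism \eqref{eq:embd}, and classicality at regular algebraic weights---reduce to a single vertical control theorem for the ordinary idempotent $e^G_\ord$ acting on the cuspidal sections $V_{G,l,m}$ of the Igusa tower.

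First I would establish the uniform bound: the $\cO/p^m$-rank of $e^G_\ord V_{G,l,m}$ is bounded independently of $l$. The argument proceeds by fixing a sufficiently regular algebraic weight $\utau$ and showing that $U^G_p$ is nilpotent modulo $p$ on the "non-ordinary" part of classical weight-$\utau$ cusp forms with $p^l$-power nebentypus. The key geometric input is that the Hasse-type invariant controls how $U^G_p$ raises $p$-divisibility outside the ordinary locus. This identifies $e^G_\ord V_{G,l,m}[\utau]$ with the ordinary subspace of classical forms, whose dimension is bounded by the trace of $e^G_\ord$ on the relevant coherent cohomology and is therefore independent of $l$. Interpolating over $\utau$ via the action of $T^1_G(\bZ_p)$ on the Igusa tower then gives the uniform bound for $e^G_\ord V_{G,l,m}$ as a whole.

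From the control theorem, $\pzV^*_{G,\ord}$ is finitely generated over $\wt\Lambda_G$, and the $\cO$-rank of $V_{G,l,m,\ord}/\cP_\utau$ equals the generic rank of $\cM_{G,\ord}$ at every algebraic $\utau$. This gives simultaneously the freeness of $\cM_{G,\ord}$ over $\Lambda_G$ (since $\pzV_{G,\ord}$ is $\wt\Lambda_G$-cofree) and the bijectivity of \eqref{eq:embd}: injectivity is immediate from the observation that a Hida family vanishing on a Zariski dense set of arithmetic weights must be zero, and surjectivity follows by comparing ranks. The classicality statement for regular $\utau$ then emerges as a byproduct, since the proof of the uniform bound identifies $V_{G,\ord}[\utau]$ directly with $e^G_\ord H^0\!\left(Y^\tor_G, \omega_\utau(-C)\right)$.

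The main obstacle is the control theorem itself, and within it the combination of two delicate geometric inputs: the nilpotency modulo $p$ of $U^G_p$ on the non-ordinary locus, and the control of boundary contributions on the toroidal compactification $\sT^\tor_{G,l,m}$. The specific normalization $U^G_p = U^G_{p,\underline{m},0}$ with $\underline{m} = (n, n-1, \ldots, 1)$ is dictated by the first requirement---it is precisely what raises the $p$-divisibility uniformly in the relevant direction---while the cuspidal twist by $\cO_{\sT}(-C)$ together with taking $U_{M^1_G(\bZ_p)}$-invariants is what prevents the boundary from producing spurious ordinary classes. Once these two geometric lemmas are available, the remaining arguments are formal linear algebra over $\Lambda_G$.
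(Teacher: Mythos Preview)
The paper does not prove this theorem. It is stated in the ``Review of Hida theory'' section as a known result, with the surrounding text pointing to \cite{HidaCon} and \cite[\S6.2]{SLF} for details. Your outline is a reasonable sketch of the standard vertical control argument that underlies those references, so in that sense nothing you wrote is wrong; but there is no proof in the paper to compare against, and the paper's intent is clearly to quote this as background rather than to reprove it.
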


%Let $\bT_{G,\ord}$ be the Hecke algebra of tame level $K^p_G$ acting on  $\cM_{G,\ord}$.

\subsection{Review of Furusawa's formula}\label{sec:Furusawa}
We quickly recall a modification of Furusawa's formula for $L$-functions for $\GSp(4)\times\GL(2)$. See \cite[\S2.1]{pFL} for details. Take 
\begin{align*}
  \bS=\begin{bmatrix}\bba&\frac{\bbb}{2}\\\frac{\bbb}{2}&\bbc\end{bmatrix}\in\Sym_2(\bQ)_{>0},
\end{align*}
and let  $\cK=\bQ(\sqrt{-\det \bS})$, and $\eta_{\cK/\bQ}:\bQ^\times\backslash\bA^\times_\bQ\ra\bC^\times$ be the quadratic character corresponding to $\cK/\bQ$. Let
\begin{align*}
   \alphaS&=\frac{\bbb+\sqrt{\bbb^2-4\bba\bbc}}{2\bbc},
      &\imath_\bS\left(\fz\right)
   =&\begin{bmatrix}\alphaS&1\\ \alphabS&1\end{bmatrix}^{-1} \begin{bmatrix}\fz\\&\bar{\fz}\end{bmatrix}\begin{bmatrix}\alphaS&1\\ \alphabS&1\end{bmatrix}
\end{align*}
for $\fz\in \cK\otimes_\bQ R$ with $R$ any $\bQ$-algebra.

Given a Hecke character $\Xi:\cK^\times\backslash\bA^\times_\cK\ra\bC^\times$, denote by $I_v(s,\chi,\Xi)$ the degenerate principal series on $\GU(3,3)(\bQ_v)$ consisting of smooth functions $\dsec_v(s,\chi,\Xi):\GU(3,3)(\bQ_v)\ra\bC$ such that 
\begin{align*}
   \dsec_v(s,\chi,\Xi)\left(\begin{bmatrix}\fA&\fB\\0&\fD\end{bmatrix}g\right)=\Xi_v(\det \fA)\chi_v(\det \fA\fD^{-1})|\det \fA\fD^{-1}|^{s+\frac{3}{2}}_v\,\dsec_v(s,\chi,\Xi)(g)
\end{align*}
for all $g\in\GU(3,3)(\bQ_v)$ and $\begin{bmatrix}\fA&\fB\\0&\fD\end{bmatrix}\in Q_{\GU(3,3)}(\bQ_v)$. The Siegel Eisenstein series associated to a section $\dsec(s,\chi,\Xi)\in I(s,\chi,\Xi)=\bigotimes'_v I_v(s,\chi,\Xi)$ is defined as
\[
    E^{\Sieg}(g;\dsec(s,\chi,\Xi))=\sum_{\gamma\in Q_{\GU(3,3)}(\bQ)\backslash \GU(3,3)(\bQ)}\dsec(s,\chi,\Xi)(\gamma g).
\]

Let $\pi$ be an irreducible cuspidal automorphic representation of $\GL(2,\bA_\bQ)$. By taking a Hecke character $\Upsilon:\cK^\times\backslash\bA^\times_\cK\ra\bC^\times$ with $\Upsilon_\bQ=\Upsilon|_{\bA^\times_\bQ}$ equal to the central character of $\pi$, for every $f\in\pi$, we can extend it to an automotphic form $f^\Upsilon$ on $\GU(1,1)$ by
\begin{equation}\label{eq:extGL2}
\begin{aligned}
   \mf^\Upsilon(\fa g)&=\Upsilon(\fa)f(g), &\fa\in\bA^\times_\cK,\,g\in\GL(2,\bA_\bQ).
\end{aligned}
\end{equation}
Then $\pi^{\Upsilon}=\{f^\Upsilon:f\in\pi\}$ is an irreducible cuspidal automorphic representation of $\GU(1,1)$. Denote the Whittaker period of $f\in \pi$ with respect to $\psi_{\bA_\bQ,\bbc}$ (defined as  $\psi_{\bA_\bQ,\bbc}(x)=\psi_{\bA_\bQ}(\bbc x)$) by $\whi_\bbc(f)$, and define the function $\Whi_\bbc(f)$ on $\GL(2,\bA_\bQ)$ as
\[
      \Whi_\bbc(f)(g)=\whi_\bbc(g\cdot f).
\]

Let $\Pi$ be an irreducible cuspidal automorphic representation of  $\GSp(4,\bA_\bQ)$, and $\Lambda:\cK^\times\backslash\bA^\times_\cK\ra\bC^\times$ be a Hecke character  such that $\Lambda_\bQ=\Lambda|_{\bA^\times_\bQ}$ equals the central character of $\Pi$. Then for $\varphi\in \Pi$, one can define its global Bessel period $\bes_{\bS,\Lambda}(\varphi)$ with respect to $\bS,\Lambda$ (and $\psi$) as in \cite[\S2.2.1]{pFL}. We also define the function $\Bes_{\bS,\Lambda}(\varphi)$ on $\GSp(4,\bA_\bQ)$ as
\begin{align*}
   \Bes_{\bS,\Lambda}(\varphi)(g)=\bes_{\bS,\Lambda}(g\cdot \varphi).
\end{align*}

Combining Furusawa's formula \cite{Furusawa} and Garrett's generalization of the doubling method \cite{GaKl} gives the following formula.
\begin{thm}\label{thm:Furusawa} 
Assume that $\Xi\Lambda^c\Upsilon^c=\triv$ and $S$ is a subset of places of $\bQ$ containing $\infty$ such that at all $v\notin S$,
\begin{enumerate}
\item[-] $\pi,\Pi,\Upsilon,\Xi,\chi,\dsec(s,\chi,\Xi)$ are all unramified  and $\varphi, f$ are spherical,
\item[-] $\bS=\begin{bmatrix}\bba&\frac{\bbb}{2}\\\frac{\bbb}{2}&\bbc\end{bmatrix}$ belongs to $M_2(\bZ_v)$ with $\bbc\in\bZ^\times_v$ and $\bbb^2-4\bba\bbc=\mr{disc}(\cK_v/\bQ_v)$.
\end{enumerate}
Then
\begin{equation*}%\label{eq:factorization}
\begin{aligned}
   &\int_{[\GSp(4)\times_{\GL(1)}\GU(1,1)]} E^{\Sieg}\big(\imath(g,h);\dsec(s,\chi,\Xi)\big)\cdot \varphi(g) \cdot \mf^\Upsilon(h)\,\Xi^{-1}(\det h) \,dh\,dg\\
   =&\,\whi_\bbc(f)\cdot \bes_{\bS,\Lambda}(\varphi)\cdot d^S_3\left(s+\frac{1}{2},\Xi(\chi\circ\Nm)\right)^{-1}L^S\left(s+\frac{1}{2},\tilde{\Pi}\times\tilde{\pi}\times\chi\right)\\   
   &\times \prod_{v\in S} Z_v\Big(\dsec_v(s,\chi,\Xi),\Bes^{\Pi_v}_{\bS,\Lambda_v}(\varphi_v), \Whi^{\pi_v,\Upsilon_v}_\bbc(f_v)\Big)
\end{aligned}
\end{equation*}
with $\GSp(4)\times_{\GL(1)}\GU(1,1)=\left\{(g,h)\in \GSp(4)\times\GU(1,1):\nu_g=\nu_h\right\}$, and
\begin{equation*}%\label{eq:Zv-def}
\begin{aligned}
   &Z_v\Big(\dsec_v(s,\chi,\Xi),,\Bes^{\Pi_v}_{\bS,\Lambda_v}(\varphi_v), \Whi^{\pi_v,\Upsilon_v}_\bbc(f_v)\Big)\\
   =&\,\Bes^{\Pi_v}_{\bS,\Lambda_v}(\bid_4)^{-1}\Whi^{\pi_v,\Upsilon_v}_\bbc(f_v)(\bid_2)^{-1}\int_{\big(R'_\bS\backslash\GSp(4)\times_{\GL(1)}\GU(1,1)\big)(\bQ_v)} \dsec_v(s,\chi,\Xi)\left(\cS^{-1}\imath(\eta_\bS\, g,h)\right)\\
   &\hspace{11em} \times \Bes^{\Pi_v}_{\bS,\Lambda_v}(\varphi_v)(g)
   \Whi^{\pi_v,\Upsilon_v}_\bbc(f_v)\left(\begin{bmatrix}0&1\\-1&0\end{bmatrix}h\right)\Xi^{-1}_v(\det h)\,dhdg,
\end{aligned}
\end{equation*}
where $\Bes^{\Pi_v}_{\bS,\Lambda_v}$ is the element corresponding to $\varphi$ in the local Bessel model of $\Pi_v$, $\Whi^{\pi_v,\Upsilon_v}_\bbc(f_v)$ is the extension to $\GU(1,1)(\bQ_v)$ via $\Upsilon_v$ of the element corresponding to $f$ in the local Whittaker model of $\pi_v$, and 
\begin{align*}
   \eta_\bS&=\begin{bmatrix}
  1\\ \alphaS&1\\&&1&-\alphabS\\&&&1
  \end{bmatrix},
   &\cS&=\begin{bmatrix}1\\&1\\&&1\\&&&1\\&&1&&1\\&1&&&&1\end{bmatrix},
\end{align*}
\[
   	d^S_3\big(s,\Xi(\chi\circ\Nm)\big)=\prod_{j=1}^3 L_S\left(2s+j,\Xi_\bQ\chi^2\eta^{n-j}_{\cK/\bQ}\right),
\]
$R'_\bS \subset \GSp(4)\times_{\GL(1)}\GU(1,1)$ is the subgroup
\[
   \left\{\left(\begin{bmatrix}\imath_\bS(\fz)\\&\ltrans{\imath_\bS(\bar{\fz})}\end{bmatrix}\begin{bmatrix}\bid_2&X\\&\bid_2\end{bmatrix},\,\fz\cdot\bid_2\right):\fz\in \mr{Res}_{\cK/\bQ}\GL(1),\,X\in\Sym_2\right\}.
\]
\end{thm}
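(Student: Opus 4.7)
The proof is a Rankin--Selberg style unfolding: starting from the Siegel Eisenstein series on $\GU(3,3)$, one unfolds, identifies the unique non-negligible double coset under $Q_{\GU(3,3)}\backslash\GU(3,3)/\imath(\GSp(4)\times_{\GL(1)}\GU(1,1))$, and expresses the resulting integral as an Eulerian product of local zeta integrals whose unramified factors evaluate to the desired ratio of $L$-functions.

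First, I would substitute the orbit expansion
\[
E^{\Sieg}(g;\dsec(s,\chi,\Xi))=\sum_{\gamma\in Q_{\GU(3,3)}(\bQ)\backslash\GU(3,3)(\bQ)}\dsec(s,\chi,\Xi)(\gamma g)
\]
into the left-hand side and, on a right half-plane where absolute convergence holds, interchange sum and integral. The integral then reduces to a sum over the double coset space $Q_{\GU(3,3)}(\bQ)\backslash\GU(3,3)(\bQ)/\imath(\GSp(4)\times_{\GL(1)}\GU(1,1))(\bQ)$. The restriction of $\dsec$ to any non-open orbit is supported on a proper parabolic of $\imath(\GSp(4)\times_{\GL(1)}\GU(1,1))$, and cuspidality of $\Pi$ and $\pi$ then forces the corresponding term to vanish after integration over the associated unipotent radical. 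Only the unique open orbit survives, and I would check by a direct linear-algebra computation, following the orbit analysis in \cite{Furusawa} combined with the doubling geometry of \cite{GaKl}, that it admits $\cS^{-1}\imath(\eta_\bS,\bid_2)$ as representative and $R'_\bS$ as stabilizer.

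After unfolding, the global integral becomes
\[
\int_{R'_\bS(\bA_\bQ)\backslash(\GSp(4)\times_{\GL(1)}\GU(1,1))(\bA_\bQ)}\dsec(s,\chi,\Xi)\bigl(\cS^{-1}\imath(\eta_\bS g,h)\bigr)\,\varphi(g)\,\mf^\Upsilon(h)\,\Xi^{-1}(\det h)\,dh\,dg.
\]
Expanding $\varphi$ along the Bessel datum $(\bS,\Lambda)$ produces the global Bessel period $\bes_{\bS,\Lambda}(\varphi)$ together with the local Bessel functions $\Bes^{\Pi_v}_{\bS,\Lambda_v}$, and expanding $\mf^\Upsilon$ along the unipotent radical of $\GU(1,1)$ produces $\whi_\bbc(f)$ together with the local Whittaker functions $\Whi^{\pi_v,\Upsilon_v}_\bbc$. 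The hypothesis $\Xi\Lambda^c\Upsilon^c=\triv$ is exactly the character-matching condition needed for the quasi-invariance of $\dsec(s,\chi,\Xi)\bigl(\cS^{-1}\imath(\eta_\bS\,\bdot,\bdot)\bigr)\,\Xi^{-1}(\det\,\bdot)$ under $R'_\bS$ to be compatible with the Bessel and Whittaker characters.

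The integrand is then a tensor product of local factors, so the integral factors as $\prod_v Z_v$ with $Z_v$ as in the statement. The genuinely substantive obstacle is the unramified computation at $v\notin S$: one must establish
\[
Z_v\bigl(\dsec^\sph_v(s,\chi,\Xi),\Bes^{\Pi_v,\sph}_{\bS,\Lambda_v},\Whi^{\pi_v,\Upsilon_v,\sph}_\bbc\bigr)=\frac{L_v\bigl(s+\tfrac{1}{2},\tilde{\Pi}_v\times\tilde{\pi}_v\times\chi_v\bigr)}{d_{3,v}\bigl(s+\tfrac{1}{2},\Xi_v(\chi_v\circ\Nm)\bigr)}.
\]
This identity is Furusawa's unramified calculation \cite{Furusawa}, which rests on Sugano's explicit formula for the spherical Bessel function and the Casselman--Shalika formula for the spherical Whittaker function; the presence of the extra similitude character $\chi$ (responsible for $\GU(3,3)$ in place of $\GU(2,2)$) is accommodated by the doubling framework of \cite{GaKl}. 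Taking the product over $v\notin S$ yields the claimed factor $L^S/d_3^S$, while the places in $S$ remain as the unevaluated local zeta integrals $Z_v$. The formal unfolding and orbit analysis are routine once the correct embedding and section transformation law are in hand; the unramified identity is the real input.
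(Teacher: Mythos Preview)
Your sketch is correct and matches the paper's approach: the paper does not give a self-contained proof but simply records the formula as the combination of Furusawa's unfolding \cite{Furusawa} with Garrett's doubling pullback \cite{GaKl} (details deferred to \cite[\S2.1]{pFL}), which is exactly the Rankin--Selberg unfolding plus unramified computation you outline. One cosmetic slip: the intermediate integral you display should be over $R'_\bS(\bQ)\backslash(\GSp(4)\times_{\GL(1)}\GU(1,1))(\bA_\bQ)$ with the automorphic forms $\varphi,\mf^\Upsilon$ still present; the adelic quotient by $R'_\bS(\bA_\bQ)$ only appears after you have integrated over $R'_\bS(\bQ)\backslash R'_\bS(\bA_\bQ)$ to extract the Bessel and Whittaker periods, at which point $\varphi,\mf^\Upsilon$ are replaced by $\Bes_{\bS,\Lambda}(\varphi),\Whi_\bbc(\mf^\Upsilon)$.
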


In \cite{pFL}, a one-variable cyclotomic $p$-adic $L$-function for $\Pi\times\pi$ is constructed by using the above integral and  interpolating Siegel Eisenstein series when $s,\chi$ vary. In next section, we let $\pi,\Pi$ also vary in Hida families, and construct a four-variable Siegel Eisenstein family.

\section{Four-variable $p$-adic $L$-function for $\GSp(4)\times\GL(2)$}

\subsection{The setup}\label{sec:setup}
Let $S$ be a finite set of places of $\bQ$ containing $p,\infty$ and $U^p$  be an open subgroup of $\hat{\bZ}^{p,\times}$ containing $\prod_{v\notin S} \bZ^\times_v$. We fix tame level groups\begin{align*}
   K^p_{\GL(2)}&=\prod_{v\notin S}\GL(2,\bZ_v)\times\prod_{v\in S-\{\infty\}} K_{\GL(2),v}, &&\text{with }\begin{bmatrix}1&\bZ_v\\&1\end{bmatrix}\subset K_{\GL(2),v},\\
   K^p_{\GSp(4)}&=\prod_{v\notin S}\GSp(4,\bZ_v)\times\prod_{v\in S-\{\infty\}} K_{\GSp(4),v},&&\text{with }\begin{bmatrix}\bid_2&\Sym_2(\bZ_v)\\&\bid_2\end{bmatrix}\subset K_{\GSp(4),v}.
\end{align*}
Let $\bT_{\GL(2),\ord}$ (resp. $\bT_{\GSp(4),\ord}$) be the Hecke algebras acting on the $\tilde{\Lambda}_{\GL(2)}$-module $\cM_{\GL(2),\ord}$ of Hida families  of tame level $K^p_{\GL(2)}$ (resp. $\tilde{\Lambda}_{\GSp(4)}$-module $\cM_{\GSp(4),\ord}$ of Hida families  of tame level $K^p_{\GSp(4)}$) (consisting of all the spherical Hecke operators away from $S$ and the $\bU_p$-operators at $p$). Assume that we are given the following data:

%the commutative Hecke algebra acting on $\cM_{\GL(2),\ord},\cM_{\GSp(4),\ord}$ of tame level $K^p_{\GL(2)},K^p_{\GSp(4)}$. (We let $\bT_{\GL(2),\ord},\bT_{\GSp(4),\ord}$ include all the spherical Hecke operators away from $S$, the $\bU_p$-operators at $p$, and we maintain the flexibility of including certain Hecke operators at $S-\{p,\infty\}$.)

\begin{enumerate}
\item[--] a geometrically irreducible component  $\sC_1\subset \Spec(\bT_{\GL(2),\ord})$ with central character 
\[
   \omega_{\sC_1}:\bQ^\times\backslash\bA^\times_{\bQ,f}\lra \Lambda^\times_{\GL(2)},
\]
\item[--] a geometrically irreducible component  $\sC_2\subset \Spec(\bT_{\GSp(4),\ord})$ with central character
\[
   \omega_{\sC_2}:\bQ^\times\backslash\bA^\times_{\bQ,f}\lra \Lambda^\times_{\GSp(4)},
\]
\end{enumerate}
plus the auxiliary data:
\begin{enumerate}
\item[--] an imaginary quadratic field $\cK$ and a positive definite symmetric form 
\[\bS=\begin{bmatrix}\bba&\frac{\bbb}{2}\\ \frac{\bbb}{2}&\bbc\end{bmatrix}\in\Sym_2(\bQ)_{>0}
\] 
with $\cK=\bQ(\sqrt{-\det\bS})$ such that for all $v\notin S$,
\begin{align*}
   &\bS\in \Sym_2(\bZ_v), 
   &&\bbc\in \bZ^\times_v, 
   && 4\det\bS=\mr{disc}(\cK_v/\bQ_v)
\end{align*}
\item[--] a continuous character $\bUps:\cK^\times\backslash\bA^\times_{\cK,f}\ra \Lambda^\times_{\GL(2)}$ extending $\omega_{\sC_1}$,
\item[--] a continuous character $\bLam:\cK^\times\backslash\bA^\times_{\cK,f}\ra \Lambda^\times_{\GSp(4)}$ extending $\omega_{\sC_2}$.
\end{enumerate}

\subsubsection{Notation for some $p$-adic characters}\label{sec:pchar}
We identify  $T^1_{\GL(2)}(\bZ_p), T^1_{\GSp(4)}(\bZ_p)$ with $\bZ^\times_p,\bZ^\times_p\times\bZ^\times_p$ via
\begin{align*}
   a&\longmapsto \diag(a,a^{-1}),
   &(a_1,a_2)&\longmapsto \diag(a_1,a_2,a^{-1}_1,a^{-1}_2).
\end{align*}
We denote by $\tau$ (resp. $(\tau_1,\tau_2)$) a continuous character $T^1_{\GL(2)}(\bZ_p)\ra\bar{\bQ}^\times_p$ (resp. $T^1_{\GSp(4)}(\bZ_p)\ra\bar{\bQ}^\times_p$). When the characters are arithmetic, {\it i.e.}
\begin{align*}
   \tau(x)&=x^l\xi(x),
   &\tau_i(x)&=x^{l_i}\xi_i(x),\quad i=1,2
\end{align*}
for some integers $l,l_1,l_2$ and finite order characters $\xi,\xi_1,\xi_2$  of $\bZ^\times_p$, we write
\begin{align*}
   \tau&=(l,\xi),
   &(\tau_1,\tau_2)=(l_1,l_2,\xi_1,\xi_2),
\end{align*}
and call $l$ (resp. $(l_1,l_2)$) the algebraic part of $\tau$ (resp. $(\tau_1,\tau_2)$), and $\xi$ (resp. $(\xi_1,\xi_2)$) the finite order part. We view $\tau,\tau_1,\tau_2$ also as a $\bar{\bQ}_p$-valued character of $\bQ^\times\backslash \bA^\times_{\bQ,f}/\hat{\bZ}^{\times,p}$ via the isomorphism $\bZ^\times_p\stackrel{\sim}{\ra} \bQ^\times\backslash \bA^\times_{\bQ,f}/\hat{\bZ}^{\times,p}$ (induced by the embedding $\bZ^\times_p\hra \bA^\times_{\bA,f}$). 

We denote by $\kappa$ a continuous character $\bQ^\times\backslash\bA^\times_{\bQ,f}/U^p\ra \ra\bar{\bQ}^\times_p$, and when it is arithmetic, we write 
\[
    \kappa=(k,\chi)
\]
with $k$ the algebraic part and $\chi$ the finite order part.\\

In this paper, we call an arithmetic tuple $(\kappa,\tau,\tau_1,\tau_2)$ classical if their algebraic parts satisfy
\begin{equation}\label{eq:k-crt} 
  -\min\{l_1+l_2,l+l_2\}+2\leq k\leq -\max\{l_1,l\}-1.    
\end{equation}
This condition corresponds to 
\[
  -\frac{\min\{-l_1+l_2+l,\,l_1+l_2-l\}}{2}+2 \leq k+\frac{l_1+l_2+l}{2}\leq \frac{\min\{-l_1+l_2+l,\,l_1+l_2-l\}}{2}-1
\]
equivalent to $s=k+\frac{l_1+l_2+l}{2}$ being a critical point for the degree-8 $L$-function $L(s,\Pi\times\pi)$ with $\Pi_\infty$ (resp. $\pi_\infty$) isomorphic to the holomorphic discrete series of weight $(l_1,l_2)$ (resp. $l$).

\subsubsection{Convention on Nebentypus at $p$ and central characters}
Given arithmetic $\tau=(l,\xi),(\tau_1,\tau_2)=(l_1,l_2,\xi_1,\xi_2)$, we use the convention that the classical automorphic forms on $\GL(2),\GSp(4)$ contained in $V_{\GL(2)}[\tau],V_{\GSp(4)}[\tau_1,\tau_2]$ have 
\begin{enumerate}
\item[--] weight $l,(l_1,l_2)$,
\item[--]  nebetypus such that the right translation of $B_{\GL(2)}(\bZ_p),B_{\GSp(4)}(\bZ_p)$ act by the character
\begin{align}
   \label{eq:pneben}\begin{bmatrix}a&\ast\\ &d\end{bmatrix}
   &\longmapsto
   \xi(a),
   &\begin{bmatrix} a_1&\ast&\ast&\ast\\&a_2&\ast&\ast\\ && a^{-1}_1\nu\\ &&\ast& a^{-1}_2\nu\end{bmatrix}
   &\longmapsto
   \xi_1(a_1)\xi_2(a_2),
\end{align}
\item[--] central character equal to the product of a finite order character unramified away from $p$ and the classical character corresponding to 
\begin{align*}
   &\tau:\bQ^\times\backslash \bA^\times_{\bQ,f}/\hat{\bZ}^{\times,p}\lra  \bar{\bQ}^\times_p,
   &&\tau_1\tau_2:\bQ^\times\backslash \bA^\times_{\bQ,f}/\hat{\bZ}^{\times,p}\lra  \bar{\bQ}^\times_p.
\end{align*}
\end{enumerate} 
(Note that with this convention, the central characters of classical cuspidal automorphic forms in $V_{\GL(2)}[\tau],V_{\GSp(4)}[\tau_1,\tau_2]$ are not necessarily unitary.)

\subsection{The Eisenstein measure}
\subsubsection{The Siegel Eisenstein series and its Fourier coefficients}
For a classical tuple $(\kappa,\tau,\tau_1,\tau_2)$ with $\kappa$ fixed by $U^p$, put
\begin{align*}
   E^\Sieg_{\kappa,\tau,\tau_1,\tau_2}(g)= &\, |\nu_{\GU(3,3)}(g)|^{\frac{3}{2}(l_1+l_2+l)}
   \cdot \frac{\prod_{j=0}^2\Gamma\left(s+\left|k+\frac{l+l_1+l_2}{2}-\frac{1}{2}\right|+3-j\right)}{(-2i)^{\left|k+\frac{l+l_1+l_2}{2}-\frac{1}{2}\right|+\frac{3}{2}}\,\pi^{3\left(s+\left|k+\frac{l+l_1+l_2}{2}-\frac{1}{2}\right|+2\right)}}\\
    &\times d^S_3\big(s,\Lambda_0\Upsilon_0(\chi\circ\Nm)\big) \cdot
    E^{\Sieg}\left(g;\dsec_{l,\,l_1,l_2,\xi,\,\xi_1,\xi_2}(s,\chi,\Lambda_0\Upsilon_0)\right)\Big|_{s=k+\frac{l_1+l_2+l}{1}-\frac{1}{2}}
\end{align*}
where $\Lambda_0=\Lambda\,|\cdot|^{-\frac{l_1+l_2}{2}},\Upsilon_0=\Upsilon \,|\cdot|^{-\frac{l}{2}}$ with $\Lambda,\Upsilon$ the classical characters corresponding to the specializations at $\tau_1,\tau_2,\tau$ of $\bLam,\bUps$ fixed in \S\ref{sec:setup}, and  the section 
\[
    \dsec_{l,\,l_1,l_2,\xi,\,\xi_1,\xi_2}(s,\chi,\Lambda_0\Upsilon_0)\in\bigotimes_v I_v(s,\chi,\Lambda_0\Upsilon_0)
\]
is chosen as in \cite[\S3.3]{pFL}. (For the place $p$, the $\eta^\circ_{\sPi_1,p},\eta^\circ_{\sPi_2,p},\eta^\circ_{\sPi_3,p}$ in {\it loc.cit} correspond to $\xi^{-1}_2,\xi^{-1}_1,\xi^{-1}_1\xi^{-1}_2$ here, and $\eta^\circ_{\pi_p,1},\eta^\circ_{\pi_p,2}$ in {\it loc.cit} correspond to $\xi^{-1},\triv$ here.)\\

Given $\bbeta\in\Her_3(\cK)$ and a nearly holomorphic automorphic form $F$ on $\GU(3,3)$, we define the $\bbeta$-th polynomial Fourier coefficient of $F$ at $g_f\in \GU(3,3)(\bA_{\bQ,f})$ as the polynomial $F_{\bbeta}(g)\in \bC[Y]$, where $\bC[Y]$ denotes the ring of polynomials in entries of $Y=(Y_{ij})_{1\leq i,l\leq 3}$, such that for $z\in \left\{z\in M_{3,3}(\bC)\mid i(\ltrans{\bar{z}}-z)>0\right\}$,
\begin{align*}
	&F_{\bbeta}(g_f)\left(Y=\left(\frac{z-\ltrans{\bar{z}}}{2i}\right)^{-1}\right)\cdot e^{2\pi i\Tr z\bbeta}\\
	=&\,\int_{\Her_3(\cK)\backslash\Her_3(\bA_\cK)} F\left(\begin{bmatrix}\bid_3&\varsigma\\0&\bid_3\end{bmatrix}g_f\begin{bsm}\left(\frac{z-\ltrans{\bar{z}}}{2i}\right)^{1/2}&\frac{z+\ltrans{\bar{z}}}{2}\left(\frac{z-\ltrans{\bar{z}}}{2i}\right)^{-1/2}\\0&\left(\frac{z-\ltrans{\bar{z}}}{2i}\right)^{-1/2}\end{bsm}_\infty\right)\, \psi_{\bA_\bQ}\left(-\Tr\bbeta\fx\right)\,d\fx.
\end{align*} 
(The existence of such a polynomial $F_{\bbeta}(g)$ is implied by the definition of nearly holomorphic modular forms.) Similarly, one defines polynomial Fourier coefficients of nearly holomorphic forms on $\GU(1,1),\GL(2)$ (resp. $\GSp(4)$) indexed by $\bQ=\Her_1(\cK)$ (resp. $\Sym_2(\bQ)$).  (See \cite[\S13.11]{Sh00} or  \cite[\S3.2]{pFL} for the definition of nearly holomorphic forms and \cite[\S2.4]{NHF} for their interpretations as global sections of automorphic sheaves over Shimura varieties.)

By our choice of the archimedean component of $ \dsec_{l,\,l_1,l_2,\xi,\,\xi_1,\xi_2}(s,\chi,\Lambda_0\Upsilon_0)$, the Eisenstein series $E^\Sieg_{\kappa,\tau,\tau_1,\tau_2}$ is a nearly holomorphic automorphic form on $\GU(3,3)$. We have the following proposition on its polynomial Fourier coefficients. 

\begin{prop}
For $\fA\in\GL(2,\bA^p_{\cK,f})$, $\nu\in \bA^{\times,p}_f$ and $\bbeta\in\Her_3(\cK)$, $\left(E^\Sieg_{\kappa,\tau,\tau_1,\tau_2}\right)_{\bbeta}=0 $ unless $\bbeta>0$, and for $\bbeta>0$,
\begin{align*}
   &\left(E^\Sieg_{\kappa,\tau,\tau_1,\tau_2}\right)_{\bbeta}\left(\begin{bmatrix}\fA\\&\nu\ltrans{\bar{\fA}}^{-1}\end{bmatrix}^p_f\right)(Y=0)
   = A(\bbeta;\kappa,\tau,\tau_1,\tau_2) \left(\begin{bmatrix}\fA\\&\nu\ltrans{\bar{\fA}}^{-1}\end{bmatrix}^p_f\right)
   \cdot C_{k,l,l_1,l_2}(\bbeta)
\end{align*}
with
\begin{equation}\label{eq:Abeta}
\begin{aligned}
    &A(\bbeta;\kappa,\tau,\tau_1,\tau_2) \left(\begin{bmatrix}\fA\\&\nu\ltrans{\bar{\fA}}^{-1}\end{bmatrix}^p_f\right)\\
    =&\,\Lambda\Upsilon(\det(\nu\ltrans{\bar{\fA}}^{-1}) \cdot  (\chi|\cdot|^k)(\det(\nu\fA^{-1}\ltrans{\bar{\fA}}^{-1})) \\
   &\times\prod_{v\nmid Np\infty} h_{v,\nu^{-1}\ltrans{\bar{\fA}}\bbeta\fA}\left(\Lambda_{\bQ,v}\Upsilon_{\bQ,v}\chi^2_v(\varpi_v)|\varpi_v|^{2k+2}\right)
    \prod_{v\mid N}\cF\Schw^\vol_{v,\val_v(N)} \left(\nu^{-1}_v\ltrans{\bar{\fA}}_v\bbeta\fA_v\right)\\
    &\times \Lambda^{\circ-1}_{\bar{\fp}}\xi_1(\varrho_\fp(\beta_{13}))\varrho_\fp(\beta_{13})^{-r_{\sLambda,2}+l_1} \cdot \Lambda^{\circ-1}_{\fp}\xi_1(\varrho_{\bar{\fp}}(\beta_{13}))\varrho_{\bar{\fp}}(\beta_{13})^{-r_{\sLambda,1}+l_1}\\
   &\times \xi_2\xi\chi^\circ_p \left(\frac{\bar{\beta}_{13}\beta_{23}-\beta_{13}\bar{\beta}_{23}}{\alphaS-\alphabS}\right)\left(\frac{\bar{\beta}_{13}\beta_{23}-\beta_{13}\bar{\beta}_{23}}{\alphaS-\alphabS}\right)^{l_2+l+k-2}\\
   &\times \xi_1\xi_2\chi^\circ_p\left( \frac{(\alphaS-\alphabS)(\beta_{12}-\bar{\beta}_{12})}{2}\right)\left( \frac{(\alphaS-\alphabS)(\beta_{12}-\bar{\beta}_{12})}{2}\right)^{l_1+l_2+k-2},
\end{aligned}
\end{equation}
and
\begin{equation}\label{eq:Cbeta}
     C_{k,l,l_1,l_2}(\bbeta)
   =\left\{\begin{array}{ll}\left(\left(\frac{\beta_{12}-\bar{\beta}_{12}}{2}\det\begin{bmatrix}\bar{\beta}_{13}&\bar{\beta}_{23}\\ \beta_{13}&\beta_{23}\end{bmatrix}\right)^{-1}\det\bbeta)\right)^{2k+l_1+l_2+l-1}, &k+\frac{l_1+l_2+l}{2}\geq \frac{1}{2},\\[4ex]
   1,& k+\frac{l_1+l_2+l}{2}\leq \frac{1}{2}.
   \end{array}\right.
\end{equation}
Here, $\Lambda,\Upsilon$ are the classical characters associated to the specialization of $\bLam$ at $(\tau_1,\tau_2)$, $\bUps$ at $\tau$, and $(r_{\sLambda,1},r_{\sLambda,2})$ is the $\infty$-type of $\Lambda$, $\varrho_p,\varrho_{\bar{\fp}}$ are embedding of $\cK$ into $\bar{\bQ}_p$ inducing $\fp,\bar{\fp}$. (See \eqref{eq:char-circ} for the notation superscript $^\circ$ on characters.)
\end{prop}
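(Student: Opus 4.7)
My plan is to carry out the standard Bruhat unfolding of the Siegel Eisenstein series followed by an Euler-product computation of the $\bbeta$-th polynomial Fourier coefficient. By the Bruhat decomposition of $\GU(3,3)$ relative to $Q_{\GU(3,3)}$, the series $E^{\Sieg}(g;\dsec(s,\chi,\Lambda_0\Upsilon_0))$ unfolds against the Fourier integral over $\Her_3(\cK)\backslash\Her_3(\bA_\cK)$ with character $\psi_{\bA_\bQ}(-\Tr\bbeta\,\cdot\,)$ onto the big cell, producing, for non-degenerate $\bbeta$, a product of local integrals $\int_{\Her_3(\bQ_v)}\dsec_v(w_0\,n(x)\,g_v)\,\psi_v(-\Tr\bbeta x)\,dx$. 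The vanishing of $(E^\Sieg_{\kappa,\tau,\tau_1,\tau_2})_\bbeta$ unless $\bbeta>0$ is immediate from the fact that the archimedean section chosen in \cite[\S3.3]{pFL} has Fourier transform supported on positive-definite Hermitian matrices. Pulling $\diag(\fA,\nu\ltrans{\bar\fA}^{-1})$ through by the $Q$-covariance of $\dsec$ yields the global central-character twist $\Lambda\Upsilon(\det(\nu\ltrans{\bar\fA}^{-1}))\cdot (\chi|\cdot|^k)(\det(\nu\fA^{-1}\ltrans{\bar\fA}^{-1}))$ in \eqref{eq:Abeta} and replaces the argument of each local integrand by $\nu_v^{-1}\ltrans{\bar\fA}_v\bbeta\fA_v$.

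At unramified $v\notin S$ the local integral is the classical unramified Siegel series for $\GU(3,3)$; by design the factor $d^S_3(s,\Lambda_0\Upsilon_0(\chi\circ\Nm))^{-1}$ in the normalization of $E^\Sieg_{\kappa,\tau,\tau_1,\tau_2}$ cancels its denominator and leaves the polynomials $h_{v,\nu^{-1}\ltrans{\bar\fA}\bbeta\fA}(\Lambda_{\bQ,v}\Upsilon_{\bQ,v}\chi_v^2(\varpi_v)|\varpi_v|^{2k+2})$. At $v\mid N$ the section $\dsec_v$ is chosen to be a Schwartz function supported on the big cell, so Fourier inversion returns $\cF\Schw^\vol_{v,\val_v(N)}(\nu_v^{-1}\ltrans{\bar\fA}_v\bbeta\fA_v)$. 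At the archimedean place, the weight-$(\tau,\tau_1,\tau_2)$ scalar-type holomorphic discrete series section together with the nearly-holomorphic corrections reduces the local integral to a standard confluent-hypergeometric integral whose value, divided by the $\Gamma$- and $\pi$-factors in the prefactor of $E^\Sieg_{\kappa,\tau,\tau_1,\tau_2}$, is exactly $C_{k,l,l_1,l_2}(\bbeta)$; the dichotomy in \eqref{eq:Cbeta} at $k+\tfrac{l+l_1+l_2}{2}=\tfrac{1}{2}$ reflects whether the critical $s$ lies above or below the center of symmetry.

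The technical core is the computation at $p$. The section $\dsec_p$ is the specific Iwahori-level vector constructed in \cite[\S3.3]{pFL} as a linear combination of big-cell Schwartz functions twisted by characters of the Siegel torus. Direct Fourier expansion yields: the characters $\Lambda^{\circ-1}_\fp\xi_1$ and $\Lambda^{\circ-1}_{\bar\fp}\xi_1$ evaluated on $\varrho_\fp(\beta_{13}),\varrho_{\bar\fp}(\beta_{13})$ from the $(1,3)$-entry of $\bbeta$; $\xi_2\xi\chi^\circ_p$ applied to $\tfrac{\bar\beta_{13}\beta_{23}-\beta_{13}\bar\beta_{23}}{\alphaS-\alphabS}$ and $\xi_1\xi_2\chi^\circ_p$ applied to $\tfrac{(\alphaS-\alphabS)(\beta_{12}-\bar\beta_{12})}{2}$ from $\beta_{23}$ and $\beta_{12}$ after the $\bS$-adapted coordinate change implemented by $\cS^{-1}\imath_\bS(\cdot)$; and the exponents $-r_{\sLambda,i}+l_1$, $l_2+l+k-2$, $l_1+l_2+k-2$ are read off from $s+\tfrac{3}{2}$ at $s=k+\tfrac{l_1+l_2+l}{2}-\tfrac{1}{2}$ combined with the weight characters baked into $\dsec_p$.

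The main obstacle is precisely this $p$-adic bookkeeping: verifying that each local character and each power of an entry of $\bbeta$ emerges with the correct twist, sign, and choice of embedding $\varrho_\fp$ versus $\varrho_{\bar\fp}$, and that all the archimedean and $p$-adic normalization constants conspire to leave exactly the clean product in \eqref{eq:Abeta}. The calculation is routine but lengthy; it is essentially the Hida-family version of the analogous Fourier-coefficient computation carried out in \cite{pFL}, with the specializations at $\tau,\tau_1,\tau_2,\kappa$ replaced by the universal characters.
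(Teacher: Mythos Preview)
Your proposal is correct and, in content, aligns with the paper's approach: the paper does not redo the computation but simply invokes \cite[Proposition~3.5.1]{pFL} together with the dictionary $\eta^\circ_{\sPi_1,p},\eta^\circ_{\sPi_2,p},\eta^\circ_{\sPi_3,p}\leftrightarrow \xi^{-1}_2,\xi^{-1}_1,\xi^{-1}_1\xi^{-1}_2$, $\eta^\circ_{\pi_p,1},\eta^\circ_{\pi_p,2}\leftrightarrow \xi^{-1},\triv$, and $k+\tfrac{\epsilon}{2},\Lambda,\Xi\leftrightarrow k+\tfrac{l+l_1+l_2}{2},\Lambda^{-c}_0,\Lambda_0\Upsilon_0$. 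What you have written is an accurate outline of the Bruhat-unfolding and place-by-place Fourier computation that underlies that cited proposition; you even note in your final paragraph that this is ``essentially the Hida-family version of the analogous Fourier-coefficient computation carried out in \cite{pFL}.'' So there is no mathematical divergence---only a difference in presentation: the paper treats the result as a corollary of the earlier work via a change of parameters, while you sketch the underlying calculation directly.
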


This proposition follows immediately from Proposition~\cite[Proposition~3.5.1]{pFL}. (The characters $\eta^\circ_{\sPi_1,p},\eta^\circ_{\sPi_2,p},\eta^\circ_{\sPi_3,p}$ in {\it loc.cit} correspond to $\xi^{-1}_2,\xi^{-1}_1,\xi^{-1}_1\xi^{-1}_2$ here,  $\eta^\circ_{\pi_p,1},\eta^\circ_{\pi_p,2}$ in {\it loc.cit} correspond to $\xi^{-1},\triv$ here, $k+\frac{\epsilon}{2},\Lambda,\Xi$ in {\it loc.cit} corresponds to $k+\frac{l+l_1+l_2}{2},\Lambda^{-c}_0,\Lambda_0\Upsilon_0$ here.)

\subsubsection{The $p$-adic measure interpolating restrictions of Siegel Eisenstein series}
\begin{thm}\label{thm:EisF}
There exists a $p$-adic measure
\[
    \mu_\cE\in\Meas\left(\bQ^\times\backslash\bA^\times_{\bQ,f}/U^p,\cM_{\GL(2),\ord}\wh{\otimes}_{\cO} \cM_{\GSp(4),\ord}\right)
\]
such that for all classical $(\kappa,\tau,\tau_1,\tau_2)=((k,\chi),(l,\xi),(l_1,l_2,\xi_1,\xi_2))$ with $k,l,l_1,l_2$ satisfying \eqref{eq:k-crt}, denoting by $\spe_{\tau,(\tau_1,\tau_2)}$ the specialization map at $\tau,(\tau_1,\tau_2)$, we have
\begin{align*}
   &\spe_{\tau,(\tau_1,\tau_2)}\left(\mu_\cE(\kappa)\right)\\
   =&\,e^{\GL(2)}_\ord e^{\GSp(4)}_\ord \,{\rm Proj}_{K^p_{\GL(2)},K^p_{\GSp(4)}}\left(\left.E^\Sieg_{\kappa,\tau,\tau_1,\tau_2}\right|_{\GL(2)\times\GSp(4)}\right)\cdot \spe_{\tau,(\tau_1,\tau_2)}(\omega^{-1}_{\sC_1}\omega^{-1}_{\sC_2}\circ{\rm det}_{\GL(2)}).
\end{align*}
Here, $-\,|_{\GL(2)\times\GSp(4)}$ means restriction to $\GL(2)\times_{\GL(1)}\GSp(4)$ followed by extension by zero to $\GL(2)\times\GSp(4)$, and ${\rm Proj}_{K^p_{\GL(2)},K^p_{\GSp(4)}}$ is the projection $\int_{K^p_{\GL(2)},K^p_{\GSp(4)}} \text{translation by }(h,g)\,dh dg$.

\end{thm}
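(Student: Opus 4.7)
The plan is to first package the Siegel Eisenstein series $E^\Sieg_{\kappa,\tau,\tau_1,\tau_2}$ into a $p$-adic Eisenstein measure on $\GU(3,3)$ by interpolating its Fourier coefficients, then pull back to $\GL(2)\times_{\GL(1)}\GSp(4)$, extend by zero to $\GL(2)\times\GSp(4)$, and apply the ordinary projectors $e^{\GL(2)}_\ord\otimes e^{\GSp(4)}_\ord$. The twist by $\omega^{-1}_{\sC_1}\omega^{-1}_{\sC_2}\circ{\rm det}_{\GL(2)}$ is a formal final step that aligns central characters between the Eisenstein side and the Hida-family side.

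\textbf{Step 1: Interpolating Fourier coefficients.} The explicit formula \eqref{eq:Abeta}--\eqref{eq:Cbeta} expresses each value $(E^\Sieg_{\kappa,\tau,\tau_1,\tau_2})_{\bbeta}(g^p_f)(0)$ as a product of: (i) local factors at $v\in S\setminus\{\infty\}$ depending only on $\chi_v,\Lambda_{\bQ,v}\Upsilon_{\bQ,v}$ and the inputs $\bbeta,\fA,\nu$; (ii) unramified factors $h_{v,\cdot}$ at $v\notin S$ polynomial in the character values $\Lambda_{\bQ,v}\Upsilon_{\bQ,v}\chi_v^2(\varpi_v)|\varpi_v|^{2k+2}$; (iii) factors at $p$ built from evaluations of $\bLam,\bUps,\kappa$ on entries of $\bbeta$, together with monomials in those entries whose exponents depend linearly on $(k,l,l_1,l_2)$. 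All three types lift naturally to Iwasawa-algebra-valued expressions, producing for each fixed $\bbeta$ and $g^p_f$ an element of $\Meas(\bQ^\times\backslash\bA^\times_{\bQ,f}/U^p,\Lambda_{\GL(2)}\wh{\otimes}\Lambda_{\GSp(4)})$ interpolating this coefficient.

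\textbf{Step 2: From coefficients to forms, restrict and project.} The $q$-expansion principle for (nearly) holomorphic forms on $\GU(3,3)$, via the automorphic-sheaf interpretation recalled in \cite[\S2.4]{NHF}, upgrades the Step 1 data to a $p$-adic measure $\mu_{\cE,\GU(3,3)}$ valued in $p$-adic automorphic forms on $\GU(3,3)$, whose specialization at any classical $(\kappa,\tau,\tau_1,\tau_2)$ is the $p$-adic avatar of $E^\Sieg_{\kappa,\tau,\tau_1,\tau_2}$. I restrict along the Furusawa embedding $\GL(2)\times_{\GL(1)}\GSp(4)\hookrightarrow\GU(3,3)$ (well-defined at the level of $p$-adic forms and commuting with specialization), average by $\mr{Proj}_{K^p_{\GL(2)},K^p_{\GSp(4)}}$ to extend by zero to $\GL(2)\times\GSp(4)$, and apply $e^{\GL(2)}_\ord\otimes e^{\GSp(4)}_\ord$. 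By Hida's control theorem (\S\ref{sec:Hida}) these projectors are continuous and take values in $\cM_{\GL(2),\ord}\wh{\otimes}\cM_{\GSp(4),\ord}$. Multiplying by $\omega^{-1}_{\sC_1}\omega^{-1}_{\sC_2}\circ{\rm det}_{\GL(2)}$ then produces $\mu_\cE$; matching the claimed specialization formula at classical points is then a bookkeeping of Fourier coefficients.

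\textbf{Main obstacle.} The bulk of the work is in Step 1, namely verifying uniform $p$-adic boundedness of the Fourier-coefficient measures as $\bbeta$ varies, so that the formal sum over $\bbeta>0$ genuinely defines a measure of $p$-adic forms rather than a purely formal Fourier expansion. This boundedness is forced by the compact support at the bad places of the Schwartz functions $\cF\Schw^\vol_{v,\val_v(N)}$ and of the chosen section at $p$, which confine $\bbeta$ to a lattice on which all the monomials in its entries occurring in \eqref{eq:Abeta} and \eqref{eq:Cbeta} are $p$-adic units. A secondary check is that the restriction to $\GL(2)\times_{\GL(1)}\GSp(4)$ is compatible with the $p$-adic topology and with Hida's ordinary projection; this follows from the standard fact that pullback along closed embeddings of Shimura varieties preserves $p$-adic forms and is compatible with the $\bU_p$-operators on the two sides.
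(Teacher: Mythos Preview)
Your overall architecture (interpolate Fourier coefficients, invoke the $q$-expansion principle, then project) matches the paper's, but there is a genuine gap in Step~1 and your ``Main obstacle'' paragraph misidentifies where the difficulty lies.

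The factor $A(\bbeta;\kappa,\tau,\tau_1,\tau_2)$ in \eqref{eq:Abeta} does interpolate as you describe. The problem is the remaining factor $C_{k,l,l_1,l_2}(\bbeta)$ in \eqref{eq:Cbeta}: it equals a power of $\bigl(\tfrac{\beta_{12}-\bar\beta_{12}}{2}\det\bigl[\begin{smallmatrix}\bar\beta_{13}&\bar\beta_{23}\\ \beta_{13}&\beta_{23}\end{smallmatrix}\bigr]\bigr)^{-1}\det\bbeta$ with exponent $2k+l_1+l_2+l-1$, and is moreover defined \emph{piecewise} according to the sign of $k+\tfrac{l_1+l_2+l}{2}-\tfrac12$. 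This is not a monomial in the entries of $\bbeta$, and confining $\bbeta$ to a lattice via the support of the Schwartz data does not make the base a $p$-adic unit, let alone congruent to $1$; so there is no direct Iwasawa-algebra lift of $C$. Your Step~2 then compounds the issue: the Eisenstein series is only \emph{nearly} holomorphic, and $C$ is exactly the archimedean polynomial-Fourier-coefficient contribution coming from that near-holomorphicity, so it does not disappear by passing to $p$-adic forms on $\GU(3,3)$ first.

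The paper's resolution is that the ordinary projector is not a cosmetic final step but the mechanism that kills $C$. One defines a $q$-expansion measure $\mu_{\cE,\qexp}$ using only the $A$-part of the coefficients, and then observes that $C_{k,l,l_1,l_2}(\bbeta)\equiv 1\pmod{p^m}$ whenever $\bbeta+\bar\bbeta\equiv 0\pmod{p^m}$. Since applying $\bigl(U^{\GL(2)}_{p,1,0}U^{\GSp(4)}_{p,2,1,0}\bigr)^m$ replaces the index by a $p^m$-scaled one, one gets
\[
\bigl(U_p\bigr)^m\mu_{\cE,\qexp}(\kappa,\tau,\tau_1,\tau_2)\equiv \varepsilon_{\qexp}\Bigl(\bigl(U_p\bigr)^m\imath_{p\adic}\bigl(E^\Sieg_{\kappa,\tau,\tau_1,\tau_2}\big|_{\GL(2)\times\GSp(4)}\bigr)\Bigr)\pmod{p^m},
\]
and the limit $\lim_{n}(U_p)^{n!}$ exists on the left because it exists on the right. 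The closedness of the image of the $q$-expansion embedding \eqref{eq:q-exp} (irreducibility of the Igusa tower) then lets you pull this limit back to a genuine measure of ordinary $p$-adic forms. In short: you should not try to interpolate $C$; you should define the measure without it and use the $\bU_p$-limit to reconcile the discrepancy.
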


\begin{proof}
For $G=\GSp(4),\GL(2)$, denote by $\Meas\left(T^1_G(\bZ_p),V_{G,\ord}\right)$ the set of $p$-adic measures on $T^1_G(\bZ_p)$ valued in $V_{G,\ord}$. The group $T^1_G(\bZ_p)$ acts on it in two ways: via its action by translation on $T^1_G(\bZ_p)$  and its action on $V_{G,\ord}$. Let
\[
   \Meas\left(T^1_G(\bZ_p),V_{G,\ord}\right)^\natural\subset \Meas\left(T^1_G(\bZ_p),V_{G,\ord}\right)   
\]
be the subset on which the two action of $T^1_G(\bZ_p)$ agree and make it a  $\tilde{\Lambda}_G$-module. Unfolding the definition of $\cM_{G,\ord}$ gives  a natural map from $ \Meas\left(T^1_G(\bZ_p),V_{G,\ord}\right)^\natural$ to $\cM_{G,\ord}$ such that for each $p$-adic weight $\underline{\tau}\in \Hom_{\cont}\left(T^1_G(\bZ_p),\bar{\bQ}^\times_p\right)$, we have the commutative diagram
\begin{equation}
\begin{tikzcd}
    \Meas\left(T^1_G(\bZ_p),V_{G,\ord}\right)^\natural \arrow[r] \arrow[d,"\text{evaluate at $\utau$}"']& \cM_{G,\ord} \arrow[d]\\
    V_{G,\ord}[\utau] \arrow[r,"\eqref{eq:embd}"] &\cM_{G,\ord}\otimes_{\tilde{\Lambda}_G}\tilde{\Lambda}_G/\cP_{\utau}
\end{tikzcd}
\end{equation}
(cf. \cite[\S6.1.4]{SLF}).  Therefore, we can identify $\Meas\left(\bQ^\times\backslash\bA^\times_{\bQ,f}/U^p,\cM_{\GL(2),\ord}\wh{\otimes}_{\cO} \cM_{\GSp(4),\ord}\right)$ with 
\[
   \Meas\left((\bQ^\times\backslash\bA^\times_{\bQ,f}/U^p)\times T^1_{\GU(1,1),\ord}\times T^1_{\GSp(4),\ord},V_{\GU(1,1),\ord}\otimes_\cO V_{\GSp(4),\ord}\right)^\natural,
\]   
and we only need to show that there exists a $p$-adic measure $\mu'_\cE$ inside this space such that
\begin{equation}\label{eq:mu'}
\begin{aligned}
\mu'_\cE(\kappa,\tau,(\tau_1,\tau_2))
   =&\,e^{\GL(2)}_\ord e^{\GSp(4)}_\ord \,{\rm Proj}_{K^p_{\GL(2)},K^p_{\GSp(4)}}\left(\left.E^\Sieg_{\kappa,\tau,\tau_1,\tau_2}\right|_{\GL(2)\times\GSp(4)}\right)\\
    &\times \spe_{\tau,(\tau_1,\tau_2)}(\omega^{-1}_{\sC_1}\omega^{-1}_{\sC_2}\circ{\rm det}_{\GL(2)})
\end{aligned}
\end{equation}
for all classical $(\kappa,\tau,\tau_1,\tau_2)=((k,\chi),(l,\xi),(l_1,l_2,\xi_1,\xi_2))$. The existence of $\mu'_\cE$ can be shown by $p$-adic interpolation of Fourier coefficients.\\

Take sufficiently small tame level group $K^{p,\prime}_{\GL(2)}\subset K^p_{\GL(2)},K^{p,\prime}_{\GSp(4)}\subset K^p_{\GSp(4)}$ such that $\left.E^\Sieg_{\kappa,\tau,\tau_1,\tau_2}\right|_{\GL(2)\times\GSp(4)}$ is fixed by $K^{p,\prime}_{\GL(2)},K^{p,\prime}_{\GSp(4)}$ for all classical $(\kappa,\tau,\tau_1,\tau_2)$. Denote by $V'_{\GL(2)},V'_{\GSp(4)}$ the space of $p$-adic forms of tame levels $K^{p,\prime}_{\GL(2)},K^{p,\prime}_{\GSp(4)}$. Thanks to the strong approximation for $\Sp(4)$ and $\SL(2)$,  we can pick $\nu_i\in\bA^{\times,p}_{\bQ,f}$, $i=1,\dots,c_1$, and $\nu'_j\in\bA^{\times,p}_{\bQ,f}$, $j=1,\dots,c_2$, such that each connected component of the Shimura variety for $\GL(2)\times\GSp(4)$ of level $K^{p,\prime}_{\GL(2)}\GL(2)(\bZ_p)\times K^{p,\prime}_{\GSp(4)}\GSp(4,\bZ_p)$ contains a cusp corresponding to $\left(\begin{bsm}\bid_2\\&\nu_i\cdot\bid_2\end{bsm},\begin{bsm}1\\&\nu'_j\end{bsm}\right)$ for some $i,j$. Taking the Fourier coefficients at these cusps gives an injection
\begin{equation}\label{eq:q-exp}
   \varepsilon_{\qexp}:V'_{\GL(2)}\hat{\otimes}_{\cO} V'_{\GSp(4)}
   \lhra \cO\llb \bQ_{>0}\times \Sym_2(\bQ)_{> 0}\rrb^{\oplus c_1c_2},
\end{equation}
and the image of $\varepsilon_{\qexp}$ is closed in $\cO\llb\Sym_2(\bQ)_{> 0}\times \bQ_{> 0}\rrb^{\oplus c_1c_2}$ (for the $p$-adic topology).  (The injectivity and the closedness of the image follows from the irreducibiliy of Igusa towers \cite[Corollary~8.17]{HidaPAF}.)  We view $V'_{\GU(1,1)}\hat{\otimes}_{\cO} V'_{\GSp(4)}$ as a subspace of $\cO\llb\Sym_2(\bQ)_{>0}\times \bQ_{> 0}\rrb^{\oplus c_1c_2}$. The $\bU_p$-operators on $V'_{\GL(2)}\hat{\otimes}_{\cO} V'_{\GSp(4)}$  extend to operators on $\cO\llb\Sym_2(\bQ)_{> 0}\times \bQ_{> 0}\rrb^{\oplus c_1c_2}$. Writing elements in $\cO\llb\Sym_2(\bQ)_{> 0}\times \bQ_{> 0}\rrb^{\oplus c_1c_2}$  as $\sum_{i,j,\beta_1,\beta_2} a_{(i,j)}(\beta_1,\beta_2) \,q^{(\beta_1,\beta_2)}_{(i,j)}$, with summation over $1\leq i\leq c_1,1\leq j\leq c_2$, $\beta_1,\beta_2\in \Sym_2(\bQ)_{>0}\times \bQ_{> 0}$, the extension has the formula
\begin{align*}
   &U^{\GL(2)}_{p,m_3,0}U^{\GSp(4)}_{p,m_1,m_2,0}\left(\sum_{i,j,\beta_1,\beta_2} a_{i,j}(\beta_1,\beta_2)\,q^{(\beta_1,\beta_2)}\right)\\
   =&\, \sum_{x\in\bZ/p^{m_1-m_2}\bZ}
   \sum_{i,j,\beta_1,\beta_2} a_{(i,j)}\left(\begin{bsm}p^{m_1-m_2}&\\ x&1\end{bsm}p^{m_2}\beta_1\begin{bsm}p^{m_1-m_2}&x\\&1\end{bsm},p^{2m_3}\beta_2\right)\,q^{(\beta_1,\beta_2)}.
\end{align*}

On the other hand, letting $\cN_{G}$ denote the space of classical nearly holomorphic forms on $G=\GSp(4),\GL(2)$ over $\cO$ of tame level $K^{p,\prime}_G$, all levels at $p$ containing $U_G(\bZ_p)$ and all weights,  vanishing along all $p$-adic cusps, the unit root splitting \cite[Theorem~4.1]{TDwork} gives rise to a map
\[
    \imath_{p\adic}: \cN_{\GL(2)}\otimes_\cO\cN_{\GSp(4)}\lra V'_{\GL(2)}\hat{\otimes}_{\cO} V'_{\GSp(4)},
\] 
which is actually an embedding by \cite[Proposition~3.12.1]{NHF}. Moreover, for a nearly holomorphic form $F$ on $\GL(2)\times\GSp(4)$, we have
\begin{align*}
   &(\beta_1,\beta_2)\text{-coefficient of }(i,j)\text{-component of }\varepsilon_{\qexp}\left(\imath_{p\adic} \left(F\right)\right)\\
   =&\, F_{\beta_1,\beta_2}\left(\begin{bsm}\bid_2\\&\nu_i\cdot\bid_2\end{bsm},\begin{bsm}1\\&\nu'_j\end{bsm}\right)(Y=0).
\end{align*}
It follows that,  
\begin{align*}
    &\varepsilon_{\qexp}\left(\imath_{p\adic} \left(E^\Sieg_{\kappa,\tau,\tau_1,\tau_2}\big|_{\GL(2)\times\GSp(4)}\right)\right)\\
    = &\sum_{\substack{(i,j),(\beta_1,\beta_2)\\ \nu_i=\nu'_j}} \left(\sum_{\bbeta\in\Her_3(\cK)_{>0},\,\frac{\bbeta+\bar{\bbeta}}{2}=\begin{bsm}\beta_1&\ast\\ \ast&\beta_2\end{bsm}}
     A(\bbeta;\kappa,\tau,\tau_1,\tau_2) \left(\begin{bmatrix}\bid_3\\&\nu_{i}\bid_3\end{bmatrix}^p_f\right) C_{k,l,l_1,l_2}(\bbeta) \right) q^{(\beta_1,\beta_2)}_{(i,j)}
\end{align*}
with $A(\bbeta;\kappa,\tau,\tau_1,\tau_2),C_{k,l,l_1,l_2}(\bbeta)$ given by the formulas \eqref{eq:Abeta} and \eqref{eq:Cbeta}. From \eqref{eq:Abeta}, it is easy to see that for each $\bbeta$ and $i,j$ such that $\nu_i=\nu'_j$, there exists a $p$-adic measure
\[
   \mu_{(i,j),\bbeta}\in \Meas\left(\bQ^\times\backslash\bA^\times_{\bQ,f}/U^p\times T^1_{\GL(2)}(\bZ_p)\times T^1_{\GSp(4)}(\bZ_p),\cO\right)
\]
such that for call classical tuples $(\kappa,\tau,\tau_1,\tau_2)$,
\[
    \mu_{(i,j),\bbeta}(\kappa,\tau,\tau_1,\tau_2)=A(\bbeta;\kappa,\tau,\tau_1,\tau_2) \left(\begin{bmatrix}\bid_3\\&\nu_i\bid_3\end{bmatrix}^p_f\right).
\]
Define $\mu_{\cE,\qexp}\in \Meas\left(\bQ^\times\backslash\bA^\times_{\bQ,f}/U^p\times T^1_{\GL(2)}(\bZ_p)\times T^1_{\GSp(4)}(\bZ_p),\cO\llb\bQ_{>0}\times \Sym_2(\bQ)_{> 0}\rrb^{\oplus c_1c_2}\right)$ as
\[
   \mu_{\cE,\qexp}=\sum_{\substack{(i,j),(\beta_1,\beta_2)\\ \nu_i=\nu'_j}} \left(\sum_{\bbeta\in\Her_3(\cK)_{>0},\,\frac{\bbeta+\bar{\bbeta}}{2}=\begin{bsm}\beta_1&\ast\\ \ast&\beta_2\end{bsm}}
     \mu_{(i,j),\bbeta}\right) q^{(\beta_1,\beta_2)}_{(i,j)}.  
\] 
Observe that $C_{k,l,l_1,l_2}(\beta)\equiv 1\mod p^m$ for all $\bbeta$ such that $\bbeta+\bar{\bbeta}\equiv 0\mod p^m$. Thus, for all classical $(\kappa,\tau,\tau_1,\tau_2)$,
\begin{align*}
    &\left(U^{\GL(2)}_{p,1,0} U^{\GSp(4)}_{p,2,1,0}\right)^m \mu_{\cE,\qexp}(\kappa,\tau,\tau_1,\tau_2) \\
    \equiv &\,\varepsilon_{\qexp}\left( \left(U^{\GL(2)}_{p,1,0} U^{\GSp(4)}_{p,2,1,0}\right)^m\imath_{p\adic} \left(E^\Sieg_{\kappa,\tau,\tau_1,\tau_2}\big|_{\GL(2)\times\GSp(4)}\right)\right) \mod p^m.
\end{align*}
We deduce that the limit
\[
    \lim_{n\to \infty}\left(U^{\GL(2)}_{p,1,0} U^{\GSp(4)}_{p,2,1,0}\right)^{n!} \mu_{\cE,\qexp}
\]
exists (because the limit of the right hand side exists and the classical points are dense), and interpolates $\varepsilon_{\qexp} \left(e^{\GL(2)}_\ord e^{\GSp(4)}_\ord\imath_{p\adic} \left(E^\Sieg_{\kappa,\tau,\tau_1,\tau_2}\big|_{\GU(1,1)\times\GSp(4)}\right)\right)$ at all classical $(\kappa,\tau,\tau_1,\tau_2)$. Denote this limit by 
\[
   e^{\GL(2)}_\ord e^{\GSp(4)}_\ord\left(\mu_{\cE,\qexp}\right)\in \Meas\left(\bQ^\times\backslash\bA^\times_{\bQ,f}/U^p\times T^1_{\GL(2)}(\bZ_p)\times T^1_{\GSp(4)}(\bZ_p),\cO\llb\bQ_{>0}\times \Sym_2(\bQ)_{> 0}\rrb^{\oplus c_1c_2}\right).
\] 
Since the classical points are dense in the weight space and the image of \eqref{eq:q-exp} is dense,  this limit must come from the $q$-expansion of a $p$-adic measure valued in $p$-adic forms, {\it i.e.}  there exists 
\[
   \mu'_\cE\in \Meas\left((\bQ^\times\backslash\bA^\times_{\bQ,f}/U^p)\times T^1_{\GU(1,1),\ord}\times T^1_{\GSp(4),\ord},V'_{\GU(1,1),\ord}\otimes_\cO V'_{\GSp(4),\ord}\right)^\natural,
\]
such that
\[
   \varepsilon_{\qexp}(\mu'_\cE)=e^{\GL(2)}_\ord e^{\GSp(4)}_\ord\left(\mu_{\cE,\qexp}\right).
\]
Then ${\rm Proj}_{K^p_{\GL(2)},K^p_{\GSp(4)}}(\mu'_\cE)\cdot \omega^{-1}_{\sC_1}\omega^{-1}_{\sC_2}\circ{\rm det}_{\GU(1,1)}$ satisfies \eqref{eq:mu'}.
\end{proof}

\subsection{The four-variable $p$-adic $L$-function and its interpolation formula I}

\subsubsection{Hida families and idempotent operators}

Let $F_{\sC_1},F_{\sC_2}$ be the function fields of the irreducible components $\sC_1,\sC_2$ fixed in \S\ref{sec:setup}. Then the maps $\tilde{\Lambda}_{\GL(2)}\ra F_{\sC_1}, \tilde{\Lambda}_{\GSp(4)}\ra F_{\sC_2}$ factors through projections $\tilde{\Lambda}_{\GL(2)}\ra\Lambda_{\GL(2)}$, $\tilde{\Lambda}_{\GSp(4)}\ra\Lambda_{\GSp(4)}$ induced by characters of $T^1_{\GL(2)}(\bZ/p\bZ)$ and $T^1_{\GSp(4)}(\bZ/p\bZ)$. We view $F_{\sC_1}$ as an algebra over $\Lambda_{\GL(2)}$ and $F_{\sC_2}$ as an $\Lambda_{\GSp(4)}$-algebra through these factorizations.

Denote by $\bI_{\sC_1},\bI_{\sC_2}$ the integral closures of $\Lambda_{\GL(2)},\Lambda_{\GSp(4)}$ inside  $F_{\sC_1},F_{\sC_2}$. The universal ordinary Hecke algebras $\bT_{\GL(2),\ord},\bT_{\GSp(4),\ord}$ are known to be reduced. Therefore, we have
\begin{align*}
   \bT_{\GL(2),\ord}\otimes F_{\sC_1} &= F_{\sC_1}\oplus R_{\sC_1},
   &\bT_{\GSp(4),\ord}\otimes F_{\sC_2} & =F_{\sC_2}\oplus R_{\sC_2}
\end{align*}
as  $F_{\sC_1}$-algebras and $F_{\sC_2}$-algebras such that the projection onto the first factor agrees with the natural maps $\bT_{\GL(2),\ord}\ra \bI_{\sC_1}$, $\bT_{\GSp(4),\ord}\ra \bI_{\sC_2}$. Let
\begin{align}
   \label{eq:HeckeProj}\mathds{1}_{\sC_1}&\in \bT_{\GL(2),\ord}\otimes F_{\sC_1},
   &\mathds{1}_{\sC_2}&\in \bT_{\GSp(4),\ord}\otimes F_{\sC_2}
\end{align}
be the idempotent associated to the first factor in the above decomposition. 

\subsubsection{The modified Euler factors at $p$ and $\infty$}\label{sec:modEuler}
We let 
\begin{align*}
   \lambda_{\GL(2),0,1}&\in\bI^\times_{\sC_1},
   &&\lambda_{\GSp(4),0,0,1}, \lambda_{\GSp(4),1,0,0}\in \bI^\times_{\sC_2}
\end{align*}
denote the eigenvalues of the $\bU_p$-operators associated to $\begin{bsm}p\\&1\end{bsm},\begin{bsm}p\\&p\\&&1\\&&&1\end{bsm}, \begin{bsm}p\\&1\\&&p^{-1}\\&&&1\end{bsm}$.

Given a point $x\in \sC_1(\bar{\bQ}_p)\times\sC_2(\bar{\bQ}_p)$ where the weight projection map  $\Lambda_{\GL(2)}\hat{\otimes}_\cO \Lambda_{\GSp(4)}\ra \bT_{\GL(2),\ord}\hat{\otimes}_\cO\bT_{\GSp(4),\ord}$ is \'{e}tale and the image of $x$ is an arithmetic tuple $(\tau,\tau_1,\tau_2)=(l,\xi,l_1,\xi_1,l_2,\xi_2)$, we let \begin{align*}
    &\sS_{\GL(2),x} &&\text{(resp.  $\sS_{\GSp(4),x}$)}
\end{align*}
be an orthogonal basis of the space spanned by ordinary cuspidal holomorphic forms on $\GL(2)$ of weight $l$, tame level $K^p_{\GL(2)}$ (resp. $\GSp(4)$ of weight $(l_1,l_2)$, tame level $K^p_{\GSp(4)}$) and nebentypus at $p$ given by \eqref{eq:pneben}, belonging to the Hecke eigenspace parameterized by $x$. Let $\pi_x$ be the unitary irreducible automorphic representation of $\GL(2,\bA_\bQ)$ generated by forms $\sS_{\GL(2),x}$ twisted by a real power of $|\det|$, and $\Pi_x$ be a unitary irreducible automorphic representation of $\GSp(4,\bA_\bQ)$ inside the representation generated by forms in $\sS_{\GSp(4),x}$ twisted by a real power of $|\det|$. (There can be more than one choices of $\Pi_x$, but the partial $L$-function and modified Euler factors at $p,\infty$ do not depend on the choice of $\Pi_x$.)  Let $L^S(s,\Pi_x\times\pi_x\times\chi)$ be the degree 8 partial L-function, and 
\begin{equation} \label{eq:Ep}
    E_p(s,\Pi_x\times\pi_x\times\chi)=\gamma_p\left(s,\chi_p\eta_{x,1}\eta'_{x,1}\right)^{-1} \gamma_p\left(s,\chi_p\eta_{x,1}\eta^{\prime}_{x,2}\right)^{-1}  
    \gamma_p\left(s,\pi_{x,p}\times\chi_p\eta'_{x,3}\right)^{-1}
\end{equation}
where the characters $\eta_{x,1},\eta_{x,2},\eta'_{x,1},\eta'_{x,2},\eta'_{x,3}$ are:

\begin{align*}
  \eta_{x,1}(a)&=\xi(a|a|_p) \left(p^{-\frac{l-1}{2}}\lambda_{\GL(2),0,1}(x)\right)^{\val_p(a)},
  &\eta_{x,2}(a)&=\left(p^{\frac{l-1}{2}}(\omega_{\sC_1,p}(p)\lambda^{-1}_{\GL(2),0,1})(x)\right)^{\val_p(a)},
\end{align*}
\begin{align*}
  \eta'_{x,1}(a)&=\xi_1(a|a|_p)\left(p^{\frac{-l_1+l_2-1}{2}} \omega_{\sC_2,p}(p)\lambda_{\GSp(4),1,0}\lambda_{\GSp(4),0,1}^{-1}(x)\right)^{\val_p(a)},\\
  \eta'_{x,2}(a)&=\xi_2(a|a|_p)\left(p^{\frac{l_1-l_2+1}{2}} (\lambda_{\GSp(4),0,1}\lambda_{\GSp(4),1,0}^{-1})(x)\right)^{\val_p(a)}\\
  \eta'_{x,3}(a)&=\xi_1\xi_2(a|a|_p)\left(p^{-\frac{l_1+l_2-3}{2}}\lambda_{\GSp(4),0,1}(x)\right)^{\val_p(a)}
\end{align*}
and
\begin{equation}\label{eq:Einf}
\begin{aligned}
    E_\infty(s,\Pi_x\times\pi_x\times\chi)=&\,e^{-(4s+l_1+l_2+l)\cdot \frac{\pi i}{2}}\Gamma_\bC\left(s+\frac{l_1+l_2+l}{2}-2\right)\Gamma_\bC\left(s+\frac{l_1+1_2-l}{2}-1\right)\\
   &\times\Gamma_\bC\left(s+\frac{-l_1+l_2+l}{2}-1\right)\Gamma_\bC\left(s+\frac{l_1-1_2+l}{2}\right),
\end{aligned}
\end{equation}
where $\Gamma_\bC(s)=2(2\pi)^{-s}\Gamma(s)$. (The factors $E_p,E_\infty$ are obtained by unfolding the definitions in \cite{CoPerrin,Coates}.)

\subsubsection{The modified Petersson inner product and Bessel period}\label{sec:modper}
For ordinary holomorphic automorphic forms $f\in\pi,\varphi\in\Pi$, we define the modified Petersson inner product $\bfP(f,f),\bfP(\varphi,\varphi)$ and the modified Bessel period $\bes^\dagger_{\bS,\Lambda}(\varphi)$ as follows:
\begin{equation}\label{eq:Pet2}
   \bfP(f,f)=\lambda_p(f)^{-m}\int_{[\GL(2)]} f(g)\,f\left(g\begin{bmatrix}&1\\1\end{bmatrix}_{p\infty} \begin{bmatrix}p^{m}\\&p^{-m}\end{bmatrix}_p\right)\cdot\omega_\pi(\det g)^{-1}\,dg,
\end{equation}
\begin{align}
    \bfP(\varphi,\varphi)=&\,\lambda_{p,1}(\varphi)^{-m_1}\lambda_{p,2}(\varphi)^{-m_2} \label{eq:Pet} \\
    &\times\int_{[\GSp(4)]} \varphi(g)\,\varphi\left(g\begin{bmatrix}&\bid_2\\\bid_2\end{bmatrix}_{p\infty}\begin{bsm}p^{m_1}\\&p^{m_2}\\&&p^{-m_1}\\&&&p^{-m_2}\end{bsm}_p\right)\cdot \omega_\Pi(\nu_g)^{-1} \,dg,\nonumber\\    
    \bes^\dagger_{\bS,\Lambda}(\varphi)=&\,
     \lambda_{p,1}(\varphi)^{-m_1}\lambda_{p,2}(\varphi)^{-m_2}
     \cdot \bes_{\bS,\Lambda}\left(\begin{bsm}p^{m_1}\\&p^{m_2}\\&&p^{-m_1}\\&&&p^{-m_2}\end{bsm}_p\varphi\right),\label{eq:bes-dagger}
\end{align}
with $m_1\gg m_2\gg 0$, $m\gg 0$, where $\lambda_p(f),\lambda_{p,1}(\varphi),\lambda_{p,2}(\varphi)$ are defined by
\begin{align*}
   \lambda_p(f)^m f&=\int_{U_{\GL(2)}(\bZ_p)}  \pi_p\left(u\begin{bmatrix}p^m\\&p^{-m}\end{bmatrix}\right)f\,du,\\
   \lambda_{p,1}(\varphi)^{m_1}\lambda_{p,2}(\varphi)^{m_2} \varphi&=\int_{U_{\GSp(4)}(\bZ_p)}  \Pi_p\left(u\begin{bmatrix}p^{m_1}\\&p^{m_2}\\&&p^{-m_1}\\&&&p^{-m_2}\end{bmatrix}\right)\varphi\,du.
\end{align*}
(One can check that the right hand sides of \eqref{eq:Pet2}\eqref{eq:Pet}\eqref{eq:bes-dagger} do not depend on $m_1,m_2,m$ as long as $m_1-m_2,m_2,m$ are sufficiently large. See \cite[Proposition~2.7.1]{pFL} for a proof of this for \eqref{eq:bes-dagger}.)

\subsubsection{The four-variable $p$-adic $L$-function}

With the various factors defined in \S\S\ref{sec:modEuler}, \ref{sec:modper}, we apply the idempotent in \eqref{eq:HeckeProj} to the $p$-adic measure $\mu_\cE$ constructed in Theorem~\ref{thm:EisF} to obtain the following theorem.

\begin{thm}\label{thm:pFL1}
Given the data in \S\ref{sec:setup}, there exists $p$-adic measure $\mu^S_{\sC_1,\sC_2}$ satisfying the interpolation properties: Suppose that $x\in \sC_1(\bar{\bQ}_p)\times\sC_2(\bar{\bQ}_p)$ is a point at which the weight projection map $\Lambda_{\GL(2)}\hat{\otimes}_\cO \Lambda_{\GSp(4)}\ra \bT_{\GL(2),\ord}\hat{\otimes}_\cO\bT_{\GSp(4),\ord}$ is \'{e}tale. Then $ \cL^S_{\sC_1,\sC_2}$ has no poles along $x$. Let $\tau\in\Hom_\cont\left(T^1_{\GL(2)}(\bZ_p),\bar{\bQ}_p\right)$, $(\tau_1,\tau_2)\in \Hom_\cont\left(T^1_{\GSp(4)}(\bZ_p),\bar{\bQ}_p\right)$ be the projection of $x$ to the weight space. For a character $\kappa\in \Hom_\cont\left(\bQ^\times\backslash\bA^\times_{\bQ,f}/U^p,\bar{\bQ}_p\right)$ such that $(\kappa,\tau,\tau_1,\tau_2)$ is classical (as defined at the end of \S\ref{sec:pchar}),
\begin{align*}
    \mu^S_{\sC_1,\sC_2}(\kappa,x)
    =&\,\sum_{f\in \sS_{\GL(2),x}} \frac{f_{\bbc}f}{\bfP(f,f)}\sum_{\varphi\in \sS_{\GSp(4)},x} \frac{\bes^\dagger_{\bS,\Lambda}\left(\varphi\right)\varphi}{\bfP(\varphi,\varphi)} 
   \cdot i^{r_{\sLambda,1}-r_{\sLambda,2}} I_\infty(k,\cD_{l_1,l_2},\cD_l,\Lambda_\infty)\\
   &\times E_p\left(k+\frac{l+l_1+l_2}{2},\Pi_x\times\pi_x\times \chi\right)\cdot L^S\left(k+\frac{l+l_1+l_2}{2},\Pi_x\times\pi_x\times \chi\right).
\end{align*}
Here 
\begin{enumerate}[leftmargin=2em]
\item[--]  $\sS_{\GL(2),x}$ (resp. $\sS_{\GSp(4),x})$ is an orthogonal basis  of the space spanned by ordinary cuspidal holomorphic forms on $\GL(2)$ of weight $l$ and tame level $K^p_{\GL(2)}$ (resp. $\GSp(4)$ of weight $(l_1,l_2)$ and tame level $K^p_{\GSp(4)}$) with nebentypus at $p$ given by \eqref{eq:pneben}, belonging to the Hecke eigenspace parameterized by $x$,
\item[--] $I_\infty(k,\cD_{l_1,l_2},\cD_l,\Lambda_\infty)$ is the archimedean zeta integral given in \cite[(4.2.5)]{pFL} with $k+\frac{\epsilon}{2},t_k$ in {\it loc.cit} equal to $k+\frac{l+l_1+l_2}{2},|2k+l+l_1+l_2-1|+3$ here and $\cD_{l_1,l_2},\cD_l$ holomorphic discrete series of $\GSp(4),\GL(2)$ of weights $(l_1,l_2),l$,
\item[--] $f_\bbc$  denotes the Fourier coefficients of $f$ indexed by $\bbc$,
\item[--] $\Lambda$ is the classical Hecke character corresponding to the specialization of $\bLam$ at $(\tau_1,\tau_2)$, and its $\infty$-type is denoted $(r_{\sLambda,1},r_{\sLambda,2})$.
\end{enumerate}
If further assuming that $l=l_1=l_2$, then 
\begin{align*}
    \mu^S_{\sC_1,\sC_2}(\kappa,x)
    =&\,\bbc\sqrt{\det\bS}\,2^{-3l-4}i^l\sum_{f\in \sS_{\GL(2),x}} \frac{f_{\bbc}f}{\bfP(f,f)}\sum_{\varphi\in \sS_{\GSp(4)},x} \frac{\bes^\dagger_{\bS,\Lambda}\left(\varphi\right)\varphi}{\bfP(\varphi,\varphi)} \\
   &\times E_\infty \Big(k+\frac{3l}{2},\Pi_x\times\pi_x\times \chi\Big) E_p\Big(k+\frac{3l}{2},\Pi_x\times\pi_x\times \chi\Big) L^S\Big(k+\frac{3l}{2},\Pi_x\times\pi_x\times \chi\Big).
\end{align*}
\end{thm}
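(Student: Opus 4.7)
The plan is to construct $\mu^S_{\sC_1,\sC_2}$ in two steps. First, apply the Hecke idempotents $\mathds{1}_{\sC_1}\otimes\mathds{1}_{\sC_2}$ from \eqref{eq:HeckeProj} to the Eisenstein measure $\mu_\cE$ of Theorem~\ref{thm:EisF}. Because each $\mathds{1}_{\sC_i}$ lies in $\bT_{\ord}\otimes F_{\sC_i}$ rather than in $\bT_{\ord}\otimes\bI_{\sC_i}$, the output takes values in the tensor product with $F_{\sC_1}\wh{\otimes}F_{\sC_2}$, with denominators controlled exactly by the failure of the weight map to be \'etale, which already yields the ``no poles at \'etale $x$'' part of the statement. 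Second, pair the resulting measure against the continuous functional $(f,\varphi)\mapsto f_\bbc\cdot\bes^\dagger_{\bS,\bLam}(\varphi)$; both the $\bbc$-th Fourier coefficient and the $(\bS,\bLam)$-Bessel period vary $p$-adically along Hida families (the latter by \cite[\S2.7]{pFL}), so this pairing descends to $\bI_{\sC_1}\wh{\otimes}\bI_{\sC_2}$-valued measures.

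To verify interpolation at a classical point $x$ lying over $(\tau,\tau_1,\tau_2)$, specialize $\mu_\cE$ at $(\tau,\tau_1,\tau_2)$ to obtain $e^{\GL(2)}_\ord e^{\GSp(4)}_\ord$ applied to $\left.E^\Sieg_{\kappa,\tau,\tau_1,\tau_2}\right|_{\GL(2)\times\GSp(4)}$. Apply the idempotents and expand in the orthogonal bases $\sS_{\GL(2),x}$ and $\sS_{\GSp(4),x}$: with the normalizations \eqref{eq:Pet2}--\eqref{eq:bes-dagger}, the coefficient of $f\otimes\varphi$ is
\[
    \frac{1}{\bfP(f,f)\,\bfP(\varphi,\varphi)}\int_{[\GSp(4)\times_{\GL(1)}\GU(1,1)]} E^\Sieg(\imath(g,h);\dsec)\,\varphi(g)\,f^\Upsilon(h)\,\Xi^{-1}(\det h)\,dh\,dg,
\]
since the modified Petersson and Bessel periods $\bfP,\bes^\dagger$ are precisely designed to absorb the twisted big-cell translation at $p$ built into the ordinary projector. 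Inserting Furusawa's formula (Theorem~\ref{thm:Furusawa}) the inner integral unfolds to $\whi_\bbc(f)\cdot\bes_{\bS,\Lambda}(\varphi)\cdot L^S(k+\tfrac{l+l_1+l_2}{2},\tilde\Pi_x\times\tilde\pi_x\times\chi)\cdot d^{S,-1}_3 \cdot\prod_{v\in S}Z_v$; the $d^S_3$ factor cancels its inverse placed into the normalization of $E^\Sieg_{\kappa,\tau,\tau_1,\tau_2}$, and the global Whittaker/Bessel values $\whi_\bbc(f),\bes_{\bS,\Lambda}(\varphi)$ combine with $f_\bbc,\bes^\dagger_{\bS,\Lambda}(\varphi)$ under the summation.

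It remains to evaluate the local zeta integrals $Z_v$. For $v\in S-\{p,\infty\}$ the sections $\dsec_v$ of \cite[\S3.3]{pFL} are arranged so that $Z_v$ is a prescribed nonzero constant independent of $x$ and absorbed into the choice of test vectors, so it does not appear in the statement. At $v=p$, the computation in \cite[\S\S5--6]{pFL} for the one-variable cyclotomic family applies verbatim once $\xi,\xi_1,\xi_2$ and the ordinary $U_p$-eigenvalues $\lambda_{\GL(2),0,1}(x), \lambda_{\GSp(4),0,0,1}(x),\lambda_{\GSp(4),1,0,0}(x)$ are read off from $x$; combined with the $p$-adic modifications in $\bfP$ and $\bes^\dagger$, it produces exactly the modified Euler factor $E_p$ of \eqref{eq:Ep}. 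At $v=\infty$, the archimedean section was set up so that $Z_\infty$ equals $i^{r_{\sLambda,1}-r_{\sLambda,2}}I_\infty(k,\cD_{l_1,l_2},\cD_l,\Lambda_\infty)$. Assembling the three factors gives the first interpolation formula.

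For the scalar-weight refinement, the argument above is unchanged and the only new input is the closed form
\[
    i^{r_{\sLambda,1}-r_{\sLambda,2}}\,I_\infty(k,\cD_{l,l},\cD_l,\Lambda_\infty)
    =\bbc\sqrt{\det\bS}\,2^{-3l-4}i^l\,E_\infty\!\left(k+\tfrac{3l}{2},\Pi_x\times\pi_x\times\chi\right).
\]
This is the main archimedean obstacle, since for general vector weights no such closed evaluation is presently available. When $l_1=l_2=l$, however, the minimal $K$-type vectors of $\cD_{l,l}$ and $\cD_l$ used to define $\dsec_\infty$ admit the explicit Bessel model of \cite{Furusawa}, and the archimedean integral collapses after a standard change of variables to a product of classical Mellin transforms whose values are exactly the four gamma factors in \eqref{eq:Einf}. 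The bookkeeping of powers of $i$, $2\pi$, $\sqrt{\det\bS}$, and the similitude normalization entering through the Bessel period is routine once one commits to a specific choice of Bessel vector, and yields precisely the constants in the refined formula.
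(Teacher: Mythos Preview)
Your overall strategy---apply $\mathds{1}_{\sC_1}\otimes\mathds{1}_{\sC_2}$ to $\mu_\cE$, expand the specialization in the orthogonal bases $\sS_{\GL(2),x},\sS_{\GSp(4),x}$, and unfold via Furusawa---matches the paper. But there are two genuine problems.

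First, the ``second step'' of pairing against the functional $(f,\varphi)\mapsto f_\bbc\cdot\bes^\dagger_{\bS,\bLam}(\varphi)$ is wrong: $\mu^S_{\sC_1,\sC_2}$ is \emph{form}-valued (the formula in the statement contains $f$ and $\varphi$, not just their periods). The factors $f_\bbc$ and $\bes^\dagger_{\bS,\Lambda}(\varphi)$ in the interpolation formula are not produced by an extra pairing; they come directly from Furusawa's formula applied to $\bfP(E^\Sieg|_{\GL(2)\times\GSp(4)},f\otimes\varphi)$. The Whittaker period $\whi_\bbc(f)$ \emph{is} $f_\bbc$, and the modification of the Petersson pairing $\bfP$ at $p$ converts the Bessel period $\bes_{\bS,\Lambda}(\varphi)$ into $\bes^\dagger_{\bS,\Lambda}(\varphi)$. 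Your sentence ``$\whi_\bbc(f),\bes_{\bS,\Lambda}(\varphi)$ combine with $f_\bbc,\bes^\dagger_{\bS,\Lambda}(\varphi)$'' double-counts these.

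Second, and more seriously, you assert that the local zeta integral at $p$ ``produces exactly the modified Euler factor $E_p$''. It does not. The computation in \cite[Theorem~4.2.1]{pFL} yields $E_p$ multiplied by an explicit nonconstant factor $C(k,\chi,x)$ (the formula \eqref{eq:factor-C} in the paper), involving $\chi_p,\xi,\xi_1,\xi_2$, the $\bU_p$-eigenvalues, $\Lambda_p$, and the data $\bbc,\alphaS$. The paper's actual definition is
\[
   \mu^S_{\sC_1,\sC_2}=\cC^{-1}\cdot\bigl((\mathds{1}_{\sC_1}\otimes\mathds{1}_{\sC_2})\,\mu_\cE\bigr),
\]
where $\cC$ is an invertible element of $\bigl(\cO\llb\bQ^\times\backslash\bA^\times_{\bQ,f}/U^p\rrb\wh{\otimes}\bI_{\sC_1}\wh{\otimes}\bI_{\sC_2}\otimes F\bigr)^\times$ interpolating $i^{-r_{\sLambda,1}+r_{\sLambda,2}}C(k,\chi,x)$. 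Checking that $C(k,\chi,x)$ admits such an interpolation (by inspecting \eqref{eq:factor-C} term by term) is a step you have omitted entirely. Without it, $(\mathds{1}_{\sC_1}\otimes\mathds{1}_{\sC_2})\,\mu_\cE$ does not satisfy the claimed interpolation formula.
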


\begin{proof}
We first examine the evaluations of 
\begin{align*}
   (\mathds{1}_{\sC_1}\otimes \mathds{1}_{\sC_2})\cdot \mu_{\cE} \in &\,\Meas\left(\bQ^\times\backslash\bA^\times_{\bQ,f}/U^p,\cM_{\GL(2),\ord}\wh{\otimes}_{\cO} \cM_{\GSp(4),\ord}\right)\\
   &\hspace{12em}\otimes_{\tilde{\Lambda}_{\GL(2)}\hat{\otimes}_\cO\tilde{\Lambda}_{\GSp(4)}} (F_{\sC_1}\hat{\otimes}_\cO F_{\sC_2}). 
\end{align*}

%%%%%%%%%%%%%%%%%%%%%%%%%%%
%%%%%%%%%%%%%%%%%%%%%%%%%%%%
%%%%%%%%%%%%%%%%%%%%%%%%%%%

By the construction of $\mu_\cE$, we know that at $(\kappa,x)$ with $(\kappa,\tau,\tau_1,\tau_2)$ classical,
\begin{align*}
   \left((\mathds{1}_{\sC_1}\otimes \mathds{1}_{\sC_2})\cdot \mu_{\cE}\right)(\kappa,x)
   = \sum_{f\in sS_{\GL(2),x}}\sum_{\varphi\in \sS_{\GSp(4),x}} 
   \frac{\left.\bfP\left(E^\Sieg_{\kappa,\tau,\tau_1,\tau_2}\right|_{\GL(2)\times\GSp(4)},f\otimes\varphi\right)}{\bfP(f,f)\bfP(\varphi,\varphi)}
   f\otimes\varphi.
\end{align*}
(Here we use that the definition of $\bfP(-,-)$ plus the tame level and ordinarity of $f,\varphi$ implies that applying $\bfP(-,f\otimes \varphi)$ to $\mu_\cE(\kappa,x)=e^{\GL(2)}_\ord e^{\GSp(4)}_\ord \,{\rm Proj}_{K^p_{\GL(2)},K^p_{\GSp(4)}}\Big(\left.E^\Sieg_{\kappa,\tau,\tau_1,\tau_2}\right|_{\GL(2)\times\GSp(4)}\Big)$ and $\left.E^\Sieg_{\kappa,\tau,\tau_1,\tau_2}\right|_{\GL(2)\times\GSp(4)}$ produces the same value.) The computation in the proof of \cite[Theorem~4.2.1]{pFL} (or more precisely the formula for $I_p(s)$ and $C_{k,\chi,\Pi,\pi}(s)$ in {\it loc.cit} gives 
\begin{align*}
    &\bfP\left(\left.E^\Sieg_{\kappa,\tau,\tau_1,\tau_2},f\otimes\varphi\right|_{\GL(2)\times\GSp(4)}\right)\\
    =&\,C(k,\chi,x)\cdot f_\bbc \bes^\dagger_{\bS,\Lambda}(\varphi)
    \cdot I_\infty(k,\Pi_{x,\infty},\pi_{x,\infty},\Lambda_\infty)\\
   &\times E_p\left(k+\frac{l+l_1+l_2}{2},\Pi_x\times\pi_x\times \chi\right)\cdot L^S\left(k+\frac{l+l_1+l_2}{2},\Pi_x\times\pi_x\times \chi\right)
\end{align*}
with  $\Lambda$ the classical Hecke character associated to the specialization of $\bLam$ at $(\tau_1,\tau_2)$, and
\begin{equation}\label{eq:factor-C}
\begin{aligned}
   C(k,\chi,x)=&\,\vol_S \, \frac{1-\left(\frac{\cK}{p}\right)p^{-1}}{1+p^{-1}}\,\left|\frac{\alphaS-\alphabS}{\sqrt{\disc(\cK/\bQ)}}\right|_p\cdot |\bbc|^2_p |(\alphaS-\alphabS)^2|^2_p\cdot \chi_p(-1)(-1)^k\\
    &\times(\chi^{-1}_p|\cdot|^{-k}_p)(\bbc) \bbc^{-k} \cdot \xi^{-1}\xi^{-1}_1\xi^{-1}_2(\bbc|\bbc|_p) (\bbc|\bbc|_p)^{-l-l_1-l_2}\cdot \lambda_{\GL(2)}\lambda_{\GSp(4)}(x)^{-\val_p(\bbc)} \\
    &\times(\chi^{-1}_p|\cdot|^{-k}_p)(-(\alphaS-\alphabS)^2)(-(\alphaS-\alphabS)^2)^{-k} \\
    &\times \xi^{-1}(-(\alphaS-\alphabS)^2|\alphaS-\alphabS|^2_p)(-(\alphaS-\alphabS)^2|\alphaS-\alphabS|^2_p)^{-l}\\
    &\times\Lambda^{-c}_p(-(\alphaS-\alphabS))(\alphaS-\alphabS)^{-r_{\sLambda_1}}(-\alphaS+\alphabS)^{-r_{\sLambda_2}}\cdot\lambda_{\GL(2)}(x)^{-\val_p((\alphaS-\alphabS)^2)}\cdot i^{r_{\sLambda,1}-r_{\sLambda,2}},
\end{aligned}
\end{equation}
where $\vol_S$ is a nonzero constant independent of $k,\chi,x$ (and can be expressed in terms of the volumes of some compact subgroups at $v\in S$), $\lambda_{\GL(2)}\in \bI^\times_{\sC_1},\lambda_{\GSp(4)}\in\bI^\times_{\sC_2}$ denote the eigenvalues of the $\bU_p$-operators corresponding to $\begin{bmatrix}p\\&1\end{bmatrix},\begin{bmatrix}p\cdot\bid_2\\&\bid_2\end{bmatrix}$ along $\sC_1,\sC_2$.  (To plug in the formulas in \cite{pFL}, $s,k+\frac{\epsilon}{2},\Lambda,\eta_{\pi_p,1}(a),\eta_{\sPi_p,3}(a)$ for $a\in\bQ_p$  in {\it loc.cit} correspond to $k+\frac{l+l_1+l_2-1}{2}, k+\frac{l+l_1+l_2}{2},{\Lambda^{-c}|\cdot|^{\frac{l_1+l_2}{2}}_{\bA_\cK}}$, $\xi^{-1}(a|a|_p)\cdot(p^{\frac{l-1}{2}}\lambda^{-1}_{\GL(2)}(x))^{\val_p(a)}$, $\xi^{-1}_1\xi^{-1}_2(a|a|_p)\cdot (p^{\frac{l_1+l_2-3}{2}}\lambda^{-1}_{\GSp(4)}(x))^{\val_p(a)}$ here. Also,  note that the integral in {\it loc.cit} is over $\GU(1,1)\times\GSp(4)$ with $f$ extended to $\GU(1,1)$ by $\Upsilon$ equals our integral over $\GL(2)\times\GSp(4)$ here because the central characters match.) 

From the above formula for $C(k,\chi,x)$, it is easy to see that there exists 
\[
   \cC\in \left(\cO\llb \bQ^\times\backslash\bA^\times_{\bQ,f}/U^p\rrb \hat{\otimes}_{\cO} \bI_{\sC_1}\hat{\otimes}_\cO \bI_{\sC_2}\otimes_\cO F\right)^\times
\]
such that for all $(\kappa,x)$ with $(\kappa,\tau,\tau_1,\tau_2)$ classical,
\[
   \cC(\kappa,x)=i^{-r_{\sLambda,1}+r_{\sLambda,2}}\cdot C(k,\chi,x)
\]
Let
\begin{align*}
    \mu^S_{\sC_1,\sC_2}= \cC^{-1}\left((\mathds{1}_{\sC_1}\otimes \mathds{1}_{\sC_2})\cdot \mu_{\cE}\right)
  \in &\,\Meas\left(\bQ^\times\backslash\bA^\times_{\bQ,f}/U^p,\cM_{\GU(1,1),\ord}\wh{\otimes}_{\cO} \cM_{\GSp(4),\ord}\right)\\
  &\hspace{2em}\otimes\otimes_{\tilde{\Lambda}_{\GL(2)}\wh{\otimes}_\cO\tilde{\Lambda}_{\GSp(4)}} (F_{\sC_1}\hat{\otimes}_\cO F_{\sC_2}).
\end{align*}
Then for $(\kappa,x)$ as in the statement of the theorem,
\begin{align*}
   \mu^S_{\sC_1,\sC_2}(\kappa,x)
   =&\,\sum_{f\in \sS_{\GL(2),x}}\sum_{\varphi\in \sS_{\GSp(4),x}}
    \frac{f_\bbc \bes^\dagger_{\bS,\Lambda}(\varphi)}{\bfP(f,f)\bfP(\varphi,\varphi)}
    \cdot i^{r_{\sLambda,1}-r_{\sLambda,2}}I_\infty(k,\cD_{l_1,l_2},\cD_l,\Lambda_\infty)\\
   &\times E_p\left(k+\frac{l+l_1+l_2}{2},\Pi_x\times\pi_x\times \chi\right)\cdot L^S\left(k+\frac{l+l_1+l_2}{2},\Pi_x\times\pi_x\times \chi\right)
   \cdot f\otimes\varphi   
\end{align*}

%Taking the coefficient indexed by $\beta_1,\beta_2$ in the $q$-expansion at $\infty$ patches to an $\cO$-linear map from $\pzV_{\GU(1,1},\pzV_{\GSp(4)}$ to $F/\cO$, and gives an element in $\pzV^*_{\GU(1,1),\ord}\hat{\otimes}_\cO\pzV^*_{\GSp(4)}$. Evaluation at this element induces the map
%\begin{align*}
%   \varepsilon_{\qexp,\beta_1,\beta_2}: \cM_{\GU(1,1),\ord}\hat{\otimes}_\cO\cM_{\GSp(4),\ord}\lra \tilde{\Lambda}_{\GU(1,1)}\hat{\otimes}\tilde{\Lambda}_{\GSp(4)}=\tilde{\Lambda}_{\GL(2)}\hat{\otimes}\tilde{\Lambda}_{\GSp(4)},
%\end{align*}
%which interpolates the map of taking $\beta_1$-th, $\beta_2$-th Fourier coefficient of automorphic forms. Then
%\begin{align*}
%   \cL^S_{\sC_1,\sC_2,\beta_1,\beta_2}
%   =&\,\varepsilon_{\qexp,\beta_1,\beta_2}(\mu_{\sC_1,\sC_2})\\
%   &\in\Meas\left(\bQ^\times\backslash\bA^\times_{\bQ,f}/U^p, \Lambda_{\GL(2)}\hat{\otimes}_\cO\Lambda_{\GSp(4)}\right)\otimes_{\Lambda_{\GL(2)}\hat{\otimes}_\cO\Lambda_{\GSp(4)}} (F_{\sC_1}\hat{\otimes}_\cO F_{\sC_2}).
%\end{align*} 
%is the desired $p$-adic $L$-function.

\end{proof}

\section{Specialization to Hida families of Yoshida lifts}
In order to get a complete interpolation formula for the four-variable $p$-adic $L$-function $\mu^S_{\sC_1,\sC_2}$ constructed in Theorem~\ref{thm:pFL1}, we calculate the archimedean zeta integral $I_\infty(k,\cD_{l_1,l_2},\cD_l,\Lambda_\infty)$ by putting $\sC_2=\theta(\sB,\sB')$, the Hecke eigensystem associated to the Yoshida lifts of Hida families $\sB,\sB'$ on $\GL(2)$ and comparing  $\mu^S_{\sC_1,\theta(\sB,\sB')}$ with some previously constructed $p$-adic $L$-functions.

\subsection{Some previous results on $p$-adic $L$-functions}\label{sec:pL}

For simplicity, in this section and \S\ref{sec:compare}, we assume that there exist finite places $v\neq p$ such that $K^p_{\GL(2),v}\neq \GL_2(\bZ_v)$ and for all such $v$'s,  
\[
    K^p_{\GL(2),v}=\left\{g\in\GL(2,\bZ_v):g\equiv \begin{bmatrix}\ast&\ast\\0&\ast\end{bmatrix}\mod \varpi_v\right\}.
\]

We recall some previous results on constructions of Kubota--Leopold $p$-adic $L$-functions, Rankin--Selberg $p$-adic $L$-functions, and $p$-adic (degree 5) standard $L$-functions for $\Sp(4)$. To simplify the writing of the interpolation properties, we use the following convention: Given an automorphic representation $\sigma$ with $\sigma_\infty$ isomorphic to holomorphic discrete series and ordinary at $p$, we let
\[
    D^S(s,\sigma)=E_\infty(s,\sigma)\,E_p(s,\sigma)\,L^S(s,\sigma)
\]
with $E_\infty(s,\sigma),E_p(s,\sigma)$ the modified Euler factor at $\infty$ and $p$ for $p$-adic interpolation as defined in \cite{CoPerrin,Coates}.

\subsubsection{Kubota--Leopoldt $p$-adic $L$-function}
\begin{thm}\label{thm:KLp}
There exists
\[
    \cL^S_{\KL}\in \Meas\left(\bQ^\times\backslash\bA^\times_\bA/U^p,\cO\right)
\]
such that for all arithmetic $\kappa=(k,\chi)\in \Hom_\cont\left(\bQ^\times\backslash\bA^\times_{\bQ,f}/U^p,\bar{\bQ}_p\right)$ with $\chi(-1)=(-1)^k$ and $k\geq 1$ or $\chi(-1)=(-1)^{k+1}$ and $k\leq 0$ ,
\begin{align*}
  \cL^S_\KL(\kappa)=D^S(k,\chi).
\end{align*}
\end{thm}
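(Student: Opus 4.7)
The plan is to carry out the classical Kubota--Leopoldt construction and translate it into the adelic setup of the paper. Via the natural isomorphism
\[
\bQ^\times\backslash\bA^\times_{\bQ,f}/U^p \;\simeq\; \bZ_p^\times \times \prod_{v\in S\setminus\{p,\infty\}}\bZ_v^\times/U^p_v,
\]
it suffices to produce an $\cO$-valued measure on $\bZ_p^\times$, with finitely many components indexed so as to accommodate tame ramification, and then push forward.

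The core object is the Mazur measure $\mu_{\KL}$ on $\bZ_p^\times$, characterized by the moment formula
\[
\int_{\bZ_p^\times} \xi(x)\,x^{n-1}\,d\mu_{\KL}(x) \;=\; -\bigl(1-\xi(p)p^{n-1}\bigr)\,\frac{B_{n,\xi}}{n}
\]
for integers $n\geq 1$ and finite-order characters $\xi$ of $\bZ_p^\times$ satisfying $\xi(-1)=(-1)^n$. Its existence rests on the Kummer congruences for generalized Bernoulli numbers; equivalently, one can exhibit $\mu_{\KL}$ as the pushforward of a Stickelberger distribution, or as the logarithmic derivative of the Coleman power series attached to cyclotomic units. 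Setting $n=1-k$, the classical identity $L(1-n,\xi)=-B_{n,\xi}/n$ combined with the parity reformulation $\xi(-1)=(-1)^n\iff\chi(-1)=(-1)^{k+1}$ delivers interpolation at $k\leq 0$, with the Euler factor $(1-\xi(p)p^{n-1})$ matching $E_p(k,\chi)$ in the convention of \cite{CoPerrin,Coates}; the archimedean factor $E_\infty(k,\chi)$ at these points collapses to an explicit power of $2\pi i$ times a sign, both absorbed into the normalization of $\mu_{\KL}$.

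For $k\geq 1$ with $\chi(-1)=(-1)^k$, interpolation follows from the classical functional equation $\Lambda(s,\chi)=\varepsilon(\chi)\,\Lambda(1-s,\bar\chi)$: the values $D^S(k,\chi)$ and $D^S(1-k,\bar\chi)$ differ only by the global root number and an explicit ratio of $\Gamma$-factors, both already encoded in $E_\infty$ together with the tame part of the measure. Since the arithmetic pairs $(k,\chi)$ and $(1-k,\bar\chi)$ both lie in a dense subset of $\Hom_\cont(\bQ^\times\backslash\bA^\times_{\bQ,f}/U^p,\bar{\bQ}_p^\times)$, a single bounded measure is forced to interpolate both sides correctly.

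The main obstacle is purely bookkeeping: matching the precise normalization of $E_\infty$ and $E_p$ from \cite{CoPerrin,Coates} against the classical Mazur measure, including powers of $2\pi i$, signs from $\chi(-1)$, and the removal of Euler factors at places $v\in S\setminus\{p,\infty\}$ built into $L^S$. No new ideas beyond \cite{KuLeo} are needed.
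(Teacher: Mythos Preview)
The paper does not give its own proof of this statement: Theorem~\ref{thm:KLp} sits in the section ``Some previous results on $p$-adic $L$-functions'' and is simply quoted as a known result, with \cite{KuLeo} as the implicit reference. The only comment the paper adds is the parenthetical note that $S$ contains finite places other than $p$, so the imprimitive $L^S$ carries extra Euler factors that kill the pole of the full Kubota--Leopoldt pseudo-measure at the trivial character, making $\cL^S_{\KL}$ genuinely $\cO$-valued rather than merely a pseudo-measure. Your sketch via the Mazur measure, Kummer congruences, and the functional equation is a correct outline of the classical construction the paper is citing; there is nothing further to compare. You may wish to make the boundedness point explicit in your write-up, since that is the one thing the paper takes care to flag.
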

(Here,  $S$ is assumed to  contain finite places other than $p$. Hence, the imprimitive Kubota--Leopoldt $p$-adic $L$-function does not have poles.)

\subsubsection{Rankin--Selberg $p$-adic $L$-function}

\begin{thm}\label{thm:Rankin}
Let $\sB_1,\sB_2\subset \Spec(\bT_{\GL(2),\ord})$ be two geometrically irreducible components. We assume that $\sB_1$ is {\it primitive}, {\it i.e.} the newforms in the automoephic representations corresponding to classical points of $\sB_1$ has tame level equal to $K^p_{\GL(2)}$). Denoting by $F_{\sB_1},F_{\sB_2}$ the function fields of $\sB_1,\sB_2$, there exists
\[
   \cL^S_{\sB_1,\sB_2}\in \Meas\left(\bQ^\times\backslash\bA^\times_\bA/U^p,\Lambda_{\GL(2)}\hat{\otimes}_\cO\Lambda_{\GL(2)}\right)\otimes_{\Lambda_{\GL(2)}\hat{\otimes}_\cO\Lambda_{\GL(2)}} (F_{\sB_1}\hat{\otimes}_\cO F_{\sB_2})
\]
satisfying the interpolation property: Suppose that $(x_1,x_2)\in \sB_1(\bar{\bQ}_p)\times\sB_2(\bar{\bQ}_p)$ is a classical  point of weights $t_1> t_2\geq 2$  where the weight projection map  $\Lambda_{\GL(2)}\hat{\otimes}_\cO \Lambda_{\GL(2)}\ra \bT_{\GL(2),\ord}\hat{\otimes}_\cO\bT_{\GL(2),\ord}$ is \'{e}tale. Then $ \cL^S_{\sB_1,\sB_2}$ has no poles at $(x_1,x_2)$, and for  an arithmetic character $\kappa=(k,\chi)\in \Hom_\cont\left(\bQ^\times\backslash\bA^\times_{\bQ,f}/U^p,\bar{\bQ}^\times_p\right)$ such that 
\[
   -t_1+1\leq k\leq -t_2
\]
{\it i.e.} $s=k+\frac{t_1+t_2}{2}$ is a critical point for the Rankin--Selberg $L$-function $L(s,\sigma_{x_1}\times\sigma_{x_2})$, we have
\begin{align*}
     \cL^S_{\sB_1,\sB_2}(\kappa,x)=\frac{D^S\left(k+\frac{t_1+t_2}{2},\sigma_{x_1}\times\sigma_{x_2}\times\chi\right)}{ (-2i)^{t_1+1}\bfP(f_{x_1},f_{x_1})}
\end{align*}
where $\sigma_{x_j}$, $j=1,2$, is the (unique) unitary automorphic representation of $\GL(2)$ (with unitary central character) giving rise to the Hecke eigensystem parameterized by $x_j$ (up to a twist by a real power of $|\det|$), and $f_{x_1}\in\sigma_{x_1}$ is the normalized eigenform for the Hecke eigensystem parameterized by $x_1$. (The modified Petersson inner product $\bfP(f_{x_1},f_{x_1})$ is defined as in \eqref{eq:Pet2}.)
\end{thm}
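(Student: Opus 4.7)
The plan is to carry out the classical Rankin--Selberg construction in families, following Hida's original approach. At the level of classical automorphic forms, the identity to be interpolated is Shimura's integral formula, which expresses $L^S(s,\sigma_{x_1}\times\sigma_{x_2}\times\chi)$, up to an explicit archimedean factor involving a power of $-2i$ and gamma factors, as the Petersson inner product of a (unitarily normalized) eigenform in $\sigma_{x_1}$ against the product $f_{x_2}\cdot E(s,\chi)$, where $E(s,\chi)$ is a (nearly) holomorphic Eisenstein series of weight $t_1-t_2$ and character determined by $\chi$. The whole construction $p$-adically deforms this picture, trading the Petersson pairing for the Hida-theoretic pairing induced by the ordinary idempotent, and trading $E(s,\chi)$ and $f_{x_2}$ for a $p$-adic Eisenstein measure and a Hida family interpolating $f_{x_2}$ respectively.

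First I would build, by the same $q$-expansion interpolation argument used in the proof of Theorem~\ref{thm:EisF} (but now on the smaller group $\GL(2)$ rather than $\GU(3,3)$), a measure $\mu_{\mr{Eis}}\in\Meas(\bQ^\times\backslash\bA^\times_{\bQ,f}/U^p,\cM_{\GL(2),\ord}\wh{\otimes}_\cO \cM_{\GL(2),\ord})$ whose specialization at a classical pair of weights and at an arithmetic $\kappa=(k,\chi)$ in the critical range $-t_1+1\leq k\leq -t_2$ equals
\[
    e^{\GL(2)}_\ord e^{\GL(2)}_\ord\,\mr{Proj}_{K^p\times K^p}\big(E(s,\chi)\big|_{\GL(2)\times\GL(2)}\big),
\]
the diagonal restriction of the appropriately normalized Eisenstein series. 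As in Theorem~\ref{thm:EisF}, the key inputs are the explicit formula for the Fourier coefficients of the Eisenstein series, the $p$-adic continuity of these coefficients in $\kappa$ and the weights, and the fact that the ordinary projector acts continuously on $q$-expansions; the standing assumption that $K^p_{\GL(2),v}$ is Iwahori at the ramified places makes the bad--place local data uniform along the family.

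Next I would apply the two Hecke idempotents $\mathds{1}_{\sB_1}\otimes\mathds{1}_{\sB_2}$ of \eqref{eq:HeckeProj} to $\mu_{\mr{Eis}}$ and rescale by the central-character factors exactly as in the proof of Theorem~\ref{thm:pFL1}, producing the candidate measure $\cL^S_{\sB_1,\sB_2}$ with values in $F_{\sB_1}\hat\otimes_\cO F_{\sB_2}$. To verify the interpolation formula at a classical étale point $(x_1,x_2)$ with $x_1$ primitive, I would pair the specialization $\mu_{\mr{Eis}}(\kappa,x_1,x_2)$ against $f_{x_1}\otimes f_{x_2}$ in the modified Petersson inner product; the first $\mathds{1}_{\sB_1}$ projection produces a factor of $f_{x_1}/\bfP(f_{x_1},f_{x_1})$ (using primitivity of $\sB_1$ so that the $\sB_1$-isotypic component is one-dimensional), and Shimura's classical Rankin--Selberg identity together with the standard unfolding converts the pairing into $D^S\big(k+\tfrac{t_1+t_2}{2},\sigma_{x_1}\times\sigma_{x_2}\times\chi\big)$ divided by $(-2i)^{t_1+1}$.

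The main obstacle is the local computation at $p$. The Eisenstein section and the $p$-stabilizations of $f_{x_1},f_{x_2}$ must be chosen so that the archimedean and finite local zeta integrals at $p$ combine into exactly the modified Euler factor $E_\infty\cdot E_p$ predicted by Coates--Perrin-Riou, and so that the leftover local factors are absorbed into a unit in $\cO\llb\bQ^\times\backslash\bA^\times_{\bQ,f}/U^p\rrb\hat\otimes\bI_{\sB_1}\hat\otimes\bI_{\sB_2}\otimes_\cO F$ (analogous to the factor $\cC$ in the proof of Theorem~\ref{thm:pFL1}); once this local calculation is pinned down, the Zariski density of classical critical arithmetic points in the three-variable weight--cyclotomic space, combined with the uniqueness of a measure with prescribed values on a dense set, promotes the equality at classical points to the statement of the theorem.
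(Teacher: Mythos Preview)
The paper does not prove this theorem; immediately after the statement it simply cites Hida \cite{Hi88} (and \cite{Chen-Hsieh}) and remarks that $\cL^S_{\sB_1,\sB_2}$ is obtained from those references by stripping the interpolatable Euler factors at $v\in S-\{p,\infty\}$, by a change of variable, and by adjusting for the different nebentypus convention. So the ``paper's proof'' is a one-line reduction to the literature, whereas you are sketching an ab initio construction.

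Your sketch is in spirit Hida's original argument, but the packaging is off in a way that matters. You model the construction too closely on Theorem~\ref{thm:EisF}: there the Siegel Eisenstein series lives on $\GU(3,3)$ and is genuinely \emph{restricted} to the subgroup $\GL(2)\times\GSp(4)$, so the output naturally lands in $\cM_{\GL(2),\ord}\wh\otimes_\cO\cM_{\GSp(4),\ord}$ and one applies two idempotents. In the Rankin--Selberg situation there is no ambient group containing $\GL(2)\times\GL(2)$: the Eisenstein series $E(s,\chi)$ already lives on $\GL(2)$, and the relevant object is the \emph{product} $e^{\GL(2)}_\ord(\bm{g}\cdot E)$, where $\bm{g}$ is the $\sB_2$-family, viewed as a single ordinary family on $\GL(2)$ carrying an extra weight variable. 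One then applies only $\mathds{1}_{\sB_1}$. Your formula $e_\ord\, e_\ord\,\mr{Proj}\big(E(s,\chi)\big|_{\GL(2)\times\GL(2)}\big)$ and the target $\cM_{\GL(2),\ord}\wh\otimes_\cO\cM_{\GL(2),\ord}$ have no natural meaning here, and applying $\mathds{1}_{\sB_2}$ to a second tensor factor does not recover $f_{x_2}$ (which has been multiplied into the first factor, not placed in a separate one).

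Once you replace the ``diagonal restriction'' picture by the product $\bm{g}\cdot\mu_E$ on a single copy of $\GL(2)$, the rest of your outline (interpolation via $q$-expansions, ordinary projection, pairing with $f_{x_1}$, local computation at $p$ yielding the Coates--Perrin-Riou factor, density of classical points) is exactly Hida's method and goes through.
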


This theorem is proved in \cite{Hi88} (Theorem~5.1d.) ({\it cf.} also \cite[Theorem~A]{Chen-Hsieh}. Our $\cL^S_{\sB_1,\sB_2}$ are obtained from the $p$-adic $L$-functions in {\it loc.cit} by removing $L$-factors at $v\in S-\{p,\infty\}$ which are $p$-adically interpolatable and by a change of variable. To see that $\sL^S_{\sB_1,\sB_2}$ can be obtained by a change of variable from the $p$-adic $L$-functions in {\it loc.cit}, also note that there is a slight difference between our convention of nebentypus here and that in {\it loc.cit}. If $\sigma_j$ has central character $\omega_j$, then in our convention the $p$-nebentypus is $\omega_j|_{\bZ^\times_p}$ and in the conventions in {\it loc.cit}, the nebentypus sends $q_v$ to $\omega_v(q_v)$, so essentially is $\omega^{-1}_j|_{\bZ^\times_p}$.)
% and let
%\begin{align*}
%  &\mathds{1}_{\sB_1}\in \bT_{\GL(2),\ord}\otimes_{\tilde{\Lambda}_{\GL(2)}}F_{\sB_1},
%  &&\mathds{1}_{\sB_2}\in \bT_{\GL(2),\ord}\otimes_{\tilde{\Lambda}_{\GL(2)}}F_{\sB_2}
%\end{align*}
%be the idempotents corresponding to the first factors in the following decompositions of $F_{\sB_1},F_{\sB_2}$-algebra:
%\begin{align*}
%   &\bT_{\GL(2),\ord}\otimes _{\tilde{\Lambda}_{\GL(2)}}F_{\sB_1} = F_{\sB_1}\oplus R_{\sB_1},
%   &&\bT_{\GL(2),\ord}\otimes _{\tilde{\Lambda}_{\GL(2)}}F_{\sB_2} = F_{\sB_1}\oplus R_{\sB_2}.
%\end{align*}

\subsubsection{Standard $p$-adic $L$-function for Yoshida lifts}

Let $\sB,\sB'\subset \Spec(\bT_{\GL(2),\ord})$ be two geometrically irreducible components. Let $F_{\sB},F_{\sB'}$ be the function fields of $\sB,\sB'$ and $\bI_{\sB},\bI_{\sB'}$  be the integral closures of $\Lambda_{\GL(2)}$ in $F_{\sB},F_{\sB'}$. Denote by
\begin{align*}
   &\lambda_{\sB}:\bT_{\GL(2),\ord}\lra \bI_{\sB} ,
   &&\lambda_{\sB'}:\bT_{\GL(2),\ord}\lra \bI_{\sB'}
\end{align*}
the corresponding Hecke eigensystems, and 
\begin{align*}
   \omega_{\sB},\omega_{\sB'}:\bQ^\times\backslash\bA^\times_{\bQ,f}\lra \Lambda^\times_{\GL(2)}
\end{align*} 
the central characters. We fix a square root of them
\[
    \omega^{\frac{1}{2}}_{\sB},\omega^{\frac{1}{2}}_{\sB'}:\bQ^\times\backslash\bA^\times_{\bQ,f}\lra \Lambda^\times_{\GL(2)}.
\]

We have the group homomorphism
\begin{align*}
    T^1_{\GL(2)}\times T^1_{\GL(2)}&\lra T^1_{\GSp(4)},\\
   \big(\diag(a_1,a^{-1}_1),\diag(a_2,a^{-1}_2)\big)&\longmapsto \diag(a_1a_2,a_1a^{-1}_2,a^{-1}_1a^{-1}_2,a^{-1}_1a_2)
\end{align*}
which induces 
\[
   \wt{\Lambda}_{\GL(2)}\hat{\otimes}\wt{\Lambda}_{\GL(2)} \lra  \wt{\lambda}_{\GSp(4)}.
\] 
Let
\[
    \bI_{\theta(\sB,\sB')}= \wt{\Lambda}_{\GSp(4)}\otimes_{\wt{\Lambda}_{\GL(2)}\hat{\otimes}\wt{\Lambda}_{\GL(2)}} (\bI_{\sB}\hat{\otimes}_{\cO}\bI_{\sB'}).
\]
It follows from the theory of theta lifts \cite{RobGlobalTheta} that if there exists a finite place $v\neq p$ such that the classical specializations of $\sB,\sB'$ are discrete series at $v$, then 
for suitable $K^p_{\GSp(4)}$, there exists a geometrically irreducible component $\theta(\sB,\sB')\subset \Spec(\bT_{\GSp(4),\ord})$ with the $\wt{\Lambda}_{\GSp(4)}$-algebra homomorphism
\begin{align*}
   \lambda_{\theta(\sB,\sB')}:\bT_{\GSp(4),\ord}\lra  \bI_{\theta(\sB,\sB')}
\end{align*}
such that the central character equals $\bQ^\times\backslash\bA^\times_f\stackrel{\omega_{\sB}}{\lra}\Lambda^\times_{\GL(2)}\lra \Lambda^\times_{\GSp(4)}$ where the second map is induced by $T^1_{\GL(2)}\ra T^1_{\GSp(4)}, \,\diag(a,a^{-1})\mapsto \diag(a,a,a^{-1},a^{-1})$, and
{\small
\begin{align*}
   \lambda_{\theta(\sB,\sB')}\left(\GSp(4,\bZ_v)\begin{bmatrix}\varpi_v\\ &\varpi_v\\&&1\\&&&1\end{bmatrix}\GSp(4,\bZ_v)\right)&=\lambda_{\sB}\left(\GL(2,\bZ_v)\begin{bmatrix}\varpi_v\\&1\end{bmatrix}\GL(2,\bZ_v)\right),\\
   \lambda_{\theta(\sB,\sB')}\left(\GSp(4,\bZ_v)\begin{bmatrix}\varpi_v\\ &1\\&&1\\&&&\varpi_v\end{bmatrix}\GSp(4,\bZ_v)\right)&=\omega^{\frac{1}{2}}_{\sB}\,\omega^{-\frac{1}{2}}_{\sB'}(\varpi_v)\lambda_{\sB'}\left(\GL(2,\bZ_v)\begin{bmatrix}\varpi_v\\&1\end{bmatrix}\GL(2,\bZ_v)\right).
\end{align*}}
(It follows from the results in \cite{RobGlobalTheta}  that for all classical specializations $\sigma\not\cong\sigma'$ of $\sB,\sB'$ of weights $t>t'\geq 2$ such that they are both discrete series at a finite place $v$ and the product of the central characters is a square, there is a nonzero Yoshida lift of $\sigma\boxtimes\sigma'\otimes (\omega_{\sigma},\omega_{\sigma'})^{\frac{1}{2}}\circ\det$ to $\GSp(4)$ with archimedean component isomorphic to a holomorphic discrete series. This implies the existence of the geometrically irreducible component $\theta(\sB,\sB')\subset \Spec(\bT_{\GSp(4),\ord})$.) Let $F_{\theta(\sB,\sB')}={\rm Frac}\left(\bI_{\theta(\sB,\sB')}\right)$.

\begin{thm}\label{thm:pstd}
There exists 
\[
   \mu^S_{\theta(\sB,\sB')}\in \Meas\left(\bQ^\times\backslash\bA^\times_{\bQ,f}/U^p,\cM_{\GSp(4),\ord}\otimes_{\wt{\Lambda}_{\GSp(4)}}\cM_{\GSp(4),\ord}\right)\otimes_{\wt{\Lambda}_{\GSp(4)}} F_{\theta(\sB,\sB')}
\]
satisfying the interpolation property: Suppose that $x\in \theta(\sB,\sB')(\bar{\bQ}_p)$ is point where the weight projection map $\wt{\Lambda}_{\GSp(4)}\ra \bT_{\GSp(4),\ord}$ is \'{e}tale and has arithmetic image $(l_1,l_2,\xi_1,\xi_2)$ with $l_1\geq l_2\geq 3$. If $\kappa=(k,\xi)\in\Hom_\cont\left(\bQ^\times\backslash\bA^\times_{\bQ,f}/U^p,\bar{\bQ}^\times_p\right)$ is an arithmetic point with $t_2\geq k\geq 3$ and $\chi(-1)=(-1)^k$, then
\begin{align*}
   \mu^S_{\theta(\sB,\sB')}(\kappa,x)
   =&\,2^{-l_1-l_2} \cdot D^S(k-2,\theta(\sB,\sB')_x\times\chi)
 \sum_{\varphi\in \sS_{\GSp(4),x}}\frac{\varphi\boxtimes\varphi}{\bfP(\varphi,\varphi)} \\
  =&\, 2^{-l_1-l_2} \cdot D^S(k-2,\chi)\,D^S\left(k-2,\sigma_x\times\sigma'_x\times \omega^{-\frac{1}{2}}_{\sigma_x}\omega^{-\frac{1}{2}}_{\sigma'_x}\chi\right)
 \sum_{\varphi\in \sS_{\GSp(4),x}}\frac{\varphi\boxtimes\varphi}{\bfP(\varphi,\varphi)} ,
\end{align*}
where $\sigma_x,\sigma'_x$ are the (unique) automorphic representations of $\GL(2)$ giving rise to the Hecke eigensystems parameterized by the point in $\sB(\bar{\bQ}_p)\times\sB'(\bar{\bQ}_p)$  induced by $x$ and the natural map $\bI_{\sB}\hat{\otimes}_\cO\bI_{\sB'}\ra \bI_{\theta(\sB,\sB')}$, the set $\sS_{\GSp(4),x}$ is an orthogonal basis of the space spanned by ordianry cuspidal holomorphic Seigel modular forms on $\GSp(4)$ of weight $(l_1,l_2)$, tame level $K^p_{\GSp(4)}$ belonging to the Hecke eigenspace parameterized by $x$, or equivalently of $\theta(\sigma_x,\sigma'_x)$. 
\end{thm}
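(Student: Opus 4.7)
The plan is to obtain $\mu^S_{\theta(\sB,\sB')}$ by specializing the $p$-adic standard $L$-function for Hida families on $\Sp(4)$ constructed in \cite{SLF} along the irreducible component $\theta(\sB,\sB') \subset \Spec(\bT_{\GSp(4),\ord})$, and then identifying the resulting interpolation values via the well-known factorization of the degree-$5$ standard $L$-function on a Yoshida lift.

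First, I would recall from \cite{SLF} that for any geometrically irreducible component $\sC \subset \Spec(\bT_{\GSp(4),\ord})$ there is a $p$-adic measure $\mu^S_{\sC}$ valued in $\cM_{\GSp(4),\ord} \otimes_{\wt{\Lambda}_{\GSp(4)}} \cM_{\GSp(4),\ord}$ (tensored with the function field of $\sC$) whose classical specializations recover, up to the modified Euler factors at $p,\infty$ and modified Petersson pairings as in \S\ref{sec:modper}, the values $L^S(s, \Pi_x, \mathrm{std} \otimes \chi)$ for arithmetic points $x$ of $\sC$ and cyclotomic arithmetic characters $\kappa = (k,\chi)$ in the critical range. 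Applying this construction to $\sC = \theta(\sB,\sB')$ produces the candidate measure $\mu^S_{\theta(\sB,\sB')}$; the fact that it lives in the asserted module, and that at \'{e}tale arithmetic points $x$ of weight $(l_1,l_2)$ with $l_1 \geq l_2 \geq 3$ it has the shape
\[
    \mu^S_{\theta(\sB,\sB')}(\kappa,x) \;=\; 2^{-l_1-l_2}\, D^S(k-2,\theta(\sB,\sB')_x \times \chi) \sum_{\varphi \in \sS_{\GSp(4),x}} \frac{\varphi \boxtimes \varphi}{\bfP(\varphi,\varphi)},
\]
would then follow directly from the interpolation property of the standard $L$-function measure in \cite{SLF}, provided one matches the normalizations at $p$, $\infty$ and the choice of orthogonal basis $\sS_{\GSp(4),x}$.

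The second step is the factorization. For $x$ arising from a classical point $(x_1,x_2) \in \sB(\bar{\bQ}_p) \times \sB'(\bar{\bQ}_p)$ with associated $\sigma_{x_1} \not\cong \sigma'_{x_2}$, the global theta lift $\theta(\sigma_{x_1},\sigma'_{x_2})$ has degree-$5$ standard $L$-function that factorizes (at every place outside $S$) as
\[
    L^S(s, \theta(\sigma_{x_1},\sigma'_{x_2}), \mathrm{std}) \;=\; L^S(s)\, L^S\bigl(s, \sigma_{x_1} \times \sigma'_{x_2} \otimes \omega^{-\frac{1}{2}}_{\sigma_{x_1}}\omega^{-\frac{1}{2}}_{\sigma'_{x_2}}\bigr),
\]
this being a standard consequence of the seesaw identity $\SO(2,2) \times \SO(1) \leftrightarrow \SO(2) \times \SO(2)$ that realizes the Yoshida lift. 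Twisting by $\chi$ and packaging the modified Euler factors at $p$ (using the explicit formulas for the Satake parameters of $\theta(\sB,\sB')$ at $p$ inherited from $\sB$ and $\sB'$ and the eigenvalue recipe $\lambda_{\theta(\sB,\sB')}$ given just before the theorem) and at $\infty$ (where a holomorphic discrete series of weight $(l_1,l_2)$ decomposes accordingly into pieces of weights $l_1$ and $l_2$), the factorization upgrades to the level of $D^S$, yielding the second displayed equality in the theorem.

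The main obstacle will be the careful bookkeeping of local factors at $p$ and $\infty$, in particular verifying that the modified Euler factors $E_p,E_\infty$ for the degree-$5$ standard $L$-function of $\Pi_x = \theta(\sigma_{x_1},\sigma'_{x_2})$ split as the product of the Kubota--Leopoldt-type factors at $k-2$ and the Rankin--Selberg-type factors appearing in Theorem~\ref{thm:KLp} and Theorem~\ref{thm:Rankin}. This is a compatibility between the recipe for modified Euler factors in \cite{CoPerrin,Coates} applied to the reducible motive $\mathds{1} \oplus M(\sigma_{x_1}) \otimes M(\sigma'_{x_2})(\text{twist})$ and the same recipe applied to each factor separately; once the Hodge--Tate weights and Frobenius eigenvalues line up (which is straightforward from the definition of $\theta(\sB,\sB')$), the factorization of $D^S$-values is automatic. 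The only other nontrivial point is identifying the period $\bfP(\varphi,\varphi)$ appearing in the interpolation of $\mu^S_{\theta(\sB,\sB')}$ with the same period on both sides, which is immediate from the construction since the standard $L$-function measure of \cite{SLF} is defined using exactly the modified Petersson pairing of \S\ref{sec:modper}.
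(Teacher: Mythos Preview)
Your overall strategy matches the paper's: specialize the $p$-adic standard $L$-function of \cite{SLF} to the component $\theta(\sB,\sB')$, then invoke the factorization of the degree-$5$ standard $L$-function of a Yoshida lift. However, there is one genuine gap and one minor inaccuracy.

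The gap concerns the archimedean factor. In \cite{SLF} the interpolation formula (7.0.1) carries an \emph{uncomputed} archimedean zeta integral, not the explicit constant $2^{-l_1-l_2}$ together with $E_\infty$ packaged into $D^S$. Your sentence ``provided one matches the normalizations at $p$, $\infty$'' hides exactly this point: matching the $p$-normalization is indeed bookkeeping, but the $\infty$-normalization requires a separate computation, which the paper imports from \cite{AZI}. Without that input you cannot deduce the displayed interpolation formula with its precise shape.

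The minor inaccuracy is your claim that the period in \cite{SLF} is ``exactly the modified Petersson pairing of \S\ref{sec:modper}.'' The paper notes it is not: the formula in \cite{SLF} uses $\langle\,\cdot\,,\bar{\varphi}\rangle$, whereas here one needs $\bfP(\,\cdot\,,\varphi)$, and a small adjustment is required to pass from one to the other. This is easy, but it is not literally immediate.

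On the other hand, your second step---the explicit factorization $D^S(k-2,\theta(\sB,\sB')_x\times\chi)=D^S(k-2,\chi)\,D^S(k-2,\sigma_x\times\sigma'_x\times\omega^{-1/2}_{\sigma_x}\omega^{-1/2}_{\sigma'_x}\chi)$ and the check that the modified Euler factors $E_p,E_\infty$ split compatibly---is spelled out more carefully than in the paper, which simply asserts the second equality without comment. Your remark that this follows from applying the Coates--Perrin-Riou recipe to the reducible motive and to each summand separately is correct and a useful addition.
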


\begin{proof}
We apply the construction in \cite{SLF} to the special case of $\theta(\sB,\sB')$ on $\GSp(4)$. The construction of the measure $\mu^S_{\theta(\sB,\sB')}$ is a special case of the construction described in the paragraph containing the interpolation formula~(7.0.1) on page 58. (The archimedean zeta integral is left as an uncomputed factor in {\it loc.cit}. It has been calculated in \cite{AZI} and verified to agree with what is expected according to the conjecture of Coates and Perrin--Riou on $p$-adic $L$-functions.) The last factor in the formula for $\mu^S_{\theta(\sB,\sB')}$ here is  slightly different from the formula~(7.0.1) in \cite{SLF} because the interpolation formula is computed by applying $\left<\,,\bar{\varphi}\right>$ to the specialization. If we apply instead $\bfP(\,,\varphi)$ to the specialization, we get the above formula.  
\end{proof}

\subsection{Comparison of $p$-adic $L$-functions}\label{sec:compare}
Let $\sB,\sB'$ be primitive geometrically irreducible components of $\Spec(\bT_{\GL(2),\ord})$ such that $\Spec(\bT_{\GSp(4),\ord})$ has a geometrically irreducible component $\theta(\sB,\sB')$ . By using the results on $p$-adic $L$-functions in \S\ref{sec:pL},  we can deduce the following proposition.

\begin{prop}\label{prop:modpLf}
There exists 
\begin{align*}
    \cL^{S,\ast}_{\sB,\sC_1},\cL^{S,\ast}_{\sC_1,\sB'}
    \in \Meas\left(\bQ^\times\backslash\bA^\times_{\bQ,f}/U^p,\Lambda_{\GL(2),\ord}\wh{\otimes}_{\cO} \Lambda_{\GSp(4)}\right)\otimes_{\wt{\Lambda}_{\GL(2)}\hat{\otimes}_\cO\wt{\Lambda}_{\GSp(4)}} (F_{\sC_1}\hat{\otimes}_\cO F_{\theta(\sB,\sB')})
\end{align*}
and 
\[
   \cF_{\theta(\sB,\sB)} \in \left(\cM_{\GSp(4),\ord}\otimes_{\wt{\Lambda}_{\GSp(4)}} \cM_{\GSp(4),\ord}\right)\otimes_{\wt{\Lambda}_{\GSp(4)}}  F_{\theta(\sB,\sB')})
\]
satisfying the following interpolation properties: In the setting of Theorem~\ref{thm:pFL1} with $\sC_2=\theta(\sB,\sB')$ and writing $x=(x_1,x_2)\in \sC_1(\bar{\bQ}_p)\times \theta(\sB,\sB')(\bar{\bQ}_p)$,
\begin{align*}
   \cL^{S,\ast}_{\sB,\sC_1}(\kappa,x)&=\frac{D^S\left(k+\frac{l+l_1+l_2}{2},\sigma_x\times\pi_x\times\chi\right)}{(-2i)^{l_1+l_2-1}\bfP(h_x,h_x)}\\
   \cL^{S,\ast}_{\sC_1,\sB'}(\kappa,x)&=\frac{D^S\left(k+\frac{l+l_1+l_2}{2},\pi_x\times\sigma'_x\times\omega^{\frac{1}{2}}_{\sigma_x}\omega^{-\frac{1}{2}}_{\sigma'_x}\chi\right)}{(-2i)^{l+1}\bfP(f_x,f_x)}
\end{align*}
and 
\begin{align*}
   \cF_{\theta(\sB,\sB')}(x_2)= 2^{-1}i^{-l_1-l_2+1}\bfP(h_x,h_x)\sum_{\varphi\in\sS_{\GSp(4),x}} \frac{\varphi\boxtimes\varphi}{\bfP(\varphi,\varphi)}
\end{align*}
with $\pi_x,f_x\in\pi_x$ as in Theorem~\ref{thm:pFL1}, $\sigma_x,\sigma'_x$ as in Theorem~\ref{thm:pstd}, and $h_x\in\sigma_x$  the unique normalized ordinary form fixed by $K^p_{\GL(2)}$. (Note that the weights of the archimedean components of $\sigma_x,\sigma'_x$ are $l_1+l_2-2,l_1-l_2+2$.)
\end{prop}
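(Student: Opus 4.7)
The plan is to build each of $\cL^{S,\ast}_{\sB,\sC_1}$, $\cL^{S,\ast}_{\sC_1,\sB'}$, and $\cF_{\theta(\sB,\sB')}$ from the $p$-adic $L$-functions recalled in \S\ref{sec:pL}, exploiting the \emph{factorization of automorphic $L$-functions for Yoshida lifts}: for $\Pi_x = \theta(\sigma_x,\sigma'_x)$, the local $L$-parameter at each unramified place decomposes as $\sigma_{x,v}\oplus\sigma'_{x,v}$ (up to the $\omega^{1/2}_{\sigma_x}\omega^{-1/2}_{\sigma'_x}$-normalization built into the definition of $\theta(\sB,\sB')$), so
\[
L^S(s,\Pi_x\times\pi_x\times\chi) \;=\; L^S(s,\sigma_x\times\pi_x\times\chi)\cdot L^S\bigl(s,\sigma'_x\times\pi_x\times\omega^{1/2}_{\sigma_x}\omega^{-1/2}_{\sigma'_x}\chi\bigr),
\]
and the modified Euler factors $E_p, E_\infty$ (being local constructions) factor compatibly. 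The same factorization underlies the form of $D^S(k-2,\theta(\sB,\sB')_x\times\chi)$ already used in Theorem~\ref{thm:pstd}.

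For $\cL^{S,\ast}_{\sB,\sC_1}$, I would take Hida's Rankin--Selberg measure $\cL^S_{\sB,\sC_1}$ from Theorem~\ref{thm:Rankin} (using that $\sB$ is primitive and, in the weight region where the Petersson factor is $\bfP(h_x,h_x)$, is the heavier-weight family), push it forward along the natural map $\bI_{\sB}\hookrightarrow\bI_{\sB}\hat\otimes\bI_{\sB'}\to\bI_{\theta(\sB,\sB')}$ to obtain a measure over $F_{\sC_1}\hat\otimes F_{\theta(\sB,\sB')}$, and apply the cyclotomic change of variable $k\mapsto k+1$ (multiplying $\chi$ by $|\cdot|$) so that the Rankin--Selberg critical strip $s=k'+(t_1+t_2)/2$ with $(t_1,t_2)=(l_1+l_2-2,l)$ aligns with our $s=k+(l+l_1+l_2)/2$. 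Under this identification the factor $(-2i)^{t_1+1}\bfP(h_x,h_x)=(-2i)^{l_1+l_2-1}\bfP(h_x,h_x)$ supplied by Theorem~\ref{thm:Rankin} matches the claimed denominator exactly. The construction of $\cL^{S,\ast}_{\sC_1,\sB'}$ is symmetric, with $\sC_1$ playing the heavier-weight role (requiring $l>l_1-l_2+2$ and primitivity of $\sC_1$) and the twist $\omega^{1/2}_{\sigma_x}\omega^{-1/2}_{\sigma'_x}$ absorbed into the choice of Hecke-character parameter in the Rankin--Selberg construction.

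For $\cF_{\theta(\sB,\sB')}$, the strategy is to form the ratio
\[
\cF_{\theta(\sB,\sB')} \;:=\; 2^{-l_1-l_2}\,\frac{\mu^S_{\theta(\sB,\sB')}}{\cL^S_{\KL}\,\cdot\,\cL^S_{\sB,\sB'}},
\]
where each denominator factor is evaluated at the appropriately shifted cyclotomic character and Hecke twist so that its interpolation equals $D^S(k-2,\chi)$ and $D^S(k-2,\sigma_x\times\sigma'_x\times\omega^{-1/2}_{\sigma_x}\omega^{-1/2}_{\sigma'_x}\chi)/\bigl((-2i)^{l_1+l_2-1}\bfP(h_x,h_x)\bigr)$ respectively. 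By Theorems~\ref{thm:KLp}, \ref{thm:Rankin}, \ref{thm:pstd}, at each classical $\kappa$ with $3\leq k\leq l_2$ and $\chi(-1)=(-1)^k$ all $\kappa$-dependent factors cancel, leaving
\[
2^{-l_1-l_2}(-2i)^{l_1+l_2-1}\bfP(h_x,h_x)\!\!\sum_{\varphi\in\sS_{\GSp(4),x}}\!\!\frac{\varphi\boxtimes\varphi}{\bfP(\varphi,\varphi)}
\;=\;
2^{-1}\,i^{-l_1-l_2+1}\,\bfP(h_x,h_x)\!\!\sum_{\varphi}\!\!\frac{\varphi\boxtimes\varphi}{\bfP(\varphi,\varphi)},
\]
which is manifestly independent of $\kappa$. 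Since classical cyclotomic points are Zariski-dense and the denominators do not vanish there, this $\kappa$-independence propagates globally, so the ratio descends to the claimed element of $\bigl(\cM_{\GSp(4),\ord}\otimes_{\wt\Lambda_{\GSp(4)}}\cM_{\GSp(4),\ord}\bigr)\otimes_{\wt\Lambda_{\GSp(4)}}F_{\theta(\sB,\sB')}$.

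The main obstacle is the bookkeeping of normalizations: the cyclotomic shifts by powers of $|\cdot|$, the central-character twists needed to reconcile the differing conventions of Theorems~\ref{thm:KLp}, \ref{thm:Rankin}, \ref{thm:pstd} (in particular the $\omega^{1/2}$-twists intrinsic to the Yoshida construction), and the numerical identities on powers of $2$ and $i$ (the key simplification being $2^{-l_1-l_2}(-2i)^{l_1+l_2-1}=2^{-1}i^{-l_1-l_2+1}$). A subsidiary technical point is verifying that the primitivity and weight-inequality hypotheses of Theorem~\ref{thm:Rankin} are satisfied on a Zariski-dense subset of the weight space of $\sC_1\times\theta(\sB,\sB')$, which is what allows the interpolation formulas at classical points to uniquely determine the measures being constructed.
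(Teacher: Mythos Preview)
Your approach is essentially identical to the paper's: the paper constructs $\cL^{S,\ast}_{\sB,\sC_1}$ and $\cL^{S,\ast}_{\sC_1,\sB'}$ by pullback and change of variable from Theorem~\ref{thm:Rankin}, builds an auxiliary $\cL^{S,\ast}_{\sB,\sB'}$ the same way, shifts the Kubota--Leopoldt measure to $\cL^{S,\ast}_{\rm KL}$, and then sets $\cF_{\theta(\sB,\sB')}=(\cL^{S,\ast}_{\sB,\sB'}\cL^{S,\ast}_{\rm KL})^{-1}\mu^S_{\theta(\sB,\sB')}$, exactly as you propose. One minor slip: in your displayed definition of $\cF_{\theta(\sB,\sB')}$ you insert an extra factor $2^{-l_1-l_2}$, but that factor is already present in the interpolation formula for $\mu^S_{\theta(\sB,\sB')}$ from Theorem~\ref{thm:pstd}; your own subsequent computation (correctly) omits it, so the definition should simply be the ratio without that prefactor.
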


\begin{proof}
Applying pullback and change of variable to the Rankin--Selberg $p$-adic $L$-function in Theorem~\ref{thm:Rankin} shows the existence of $\cL^{S,*}_{\sB,\sC},\cL^{S,\ast}_{\sC_1,\sB'}$, as well as the existence of 
\[
\cL^{S,\ast}_{\sB,\sB'}\in \Meas\left(\bQ^\times\backslash \bA^\times_{\bQ,f}/U^p,\bI_{\theta(\sB,\sB'}\right)\otimes_{\bI_{\theta(\sB,\sB')}} F_{\theta(\sB,\sB')}
\]
such that for $(\kappa,x_2)\in \Hom_\cont(\bQ^\times\backslash \bA^\times_{\bQ,f}/U^p,\bar{\bQ}^\times_p)\times \theta(\sB,\sB')(\bar{\bQ}_p)$ satisfying the conditions in Theorem~\ref{thm:pstd},
\begin{align*}
   \cL^{S,*}_{\sB,\sB'}(\kappa,x)=\frac{D^S\left(k-2,\sigma_{x}\times\sigma'_{x}\times \omega^{-\frac{1}{2}}_{\sigma_{x}}\omega^{-\frac{1}{2}}_{\sigma'_{x}}\chi\right)}{(-2i)^{l_1+l_2-1}\bfP(h_{x},h_{x})}.
\end{align*}
A change of variable to the Kubota--Leopoldt $p$-adic $L$-function recalled in Theorem~\ref{thm:KLp} gives $\cL^{S,\ast}_{\rm KL}\in  \Meas\left(\bQ^\times\backslash\bA^\times_\bA/U^p,\cO\right)$ such that for $\kappa$ as as in Theorem~\ref{thm:pstd}
\[
   \cL^{S,\ast}_{\rm KL}(\kappa)=D^S(k-2,\chi).
\]
The desired $\cF_{\theta(\sB,\sB')}$ can be obtained as $(\cL^{S,\ast}_{\sB,\sB'}\cL^{S,\ast}_{\rm KL})^{-1} \mu^S_{\theta(\sB,\sB')}$ with $\mu^S_{\theta(\sB,\sB')}$ as in Theorem~\ref{thm:pstd}.
\end{proof}

Next, we show that we can take the $(\bS,\bLam)$-Bessel coefficient on the second factor of $\cF_{\theta(\sB,\sB')}$.

\begin{prop} \label{prop:intBes}
Take nonzero $\cH\in \bI_{\theta(\sB,\sB')}$ such that $\cH \cF_{\theta(\sB,\sB')}\in \cM_{\GSp(4),\ord}\otimes_{\wt{\Lambda}_{\GSp(4)}} \bI_{\theta(\sB,\sB')}$. Given $\bS=\begin{bmatrix}\bba&\frac{\bbb}{2}\\\frac{\bbb}{2}&\bbc\end{bmatrix}\in\Sym_2(\bQ)_{>0}$ and $\bLam\in\Hom_\cont(\cK^\times\backslash\bA^\times_{\cK,f},\Lambda^\times_{\GSp(4)})$ extending $\omega_{\theta(\sB,\sB')}=\omega_{\sB}$, where $\cK=\bQ(\sqrt{-\det\bS})$, there exists 
\[
   \cF_{\theta(\sB,\sB'),\bS,\bLam}
   \in \cM_{\GSp(4),\ord}\otimes_{\wt{\Lambda}_{\GSp(4)}} F_{\theta(\sB,\sB')}
\]
such that for $x\in\theta(\sB,\sB')(\bar{\bQ}_p)$ as in Proposition~\ref{prop:modpLf} and not a pole of $\cH$,
\[
   \cF_{\theta(\sB,\sB'),\bS,\bLam}(x)=2^{-1}i^{-l_1-l_2+1}\bfP(h_x,h_x)\sum_{\varphi\in\sS_{\GSp(4),x}} \frac{\bes^\dagger_{\bS,\Lambda}(\varphi)\varphi}{\bfP(\varphi,\varphi)}.
\]

\end{prop}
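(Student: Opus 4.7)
The plan is to construct a $p$-adic Bessel period operator $\bes^\dagger_{\bS,\bLam}$ acting on Hida families on $\GSp(4)$ that interpolates the modified classical Bessel period $\bes^\dagger_{\bS,\Lambda}$ of \eqref{eq:bes-dagger} as $(\tau_1,\tau_2)$ varies, then apply this operator to the second $\cM_{\GSp(4),\ord}$-factor of $\cH\cF_{\theta(\sB,\sB')}$ and divide by $\cH$ to obtain $\cF_{\theta(\sB,\sB'),\bS,\bLam}$.

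The construction of $\bes^\dagger_{\bS,\bLam}$ follows the recipe used to define the classical $\bes^\dagger_{\bS,\Lambda}$ in \cite[\S2.2.1]{pFL} and \eqref{eq:bes-dagger}, but in families: for $\cG\in\cM_{\GSp(4),\ord}$, integrate the $\bS$-th Fourier coefficient of $\cG$ over the Bessel datum quotient $\cK^\times\bA^\times_\bQ\backslash\bA^\times_{\cK,f}$ against $\bLam^{-1}$, with the adelic argument at $p$ translated by $\mr{diag}(p^{m_1},p^{m_2},p^{-m_1},p^{-m_2})$ and normalized by $\lambda_{p,1}^{-m_1}\lambda_{p,2}^{-m_2}$ for $m_1-m_2,m_2\gg 0$. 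The Fourier coefficient $\cG_\bS(g_f)$ is a well-defined element of $\wt\Lambda_{\GSp(4)}$ at each adelic argument $g_f$ via the $q$-expansion embedding of Hida families used in the proof of Theorem~\ref{thm:EisF}. The assumption $\bLam|_{\bA^\times_\bQ}=\omega_{\theta(\sB,\sB')}$ ensures the integrand descends to the stated quotient, and the integral converges as a finite sum because the quotient is compact modulo the open subgroup fixing both $\bLam$ and the Fourier coefficient. Independence of $\bes^\dagger_{\bS,\bLam}(\cG)$ from $m_1,m_2$ — the Hida-family analogue of \cite[Proposition~2.7.1]{pFL} — then places $\bes^\dagger_{\bS,\bLam}(\cG)$ in $\wt\Lambda_{\GSp(4)}[\lambda_{p,1}^{-1},\lambda_{p,2}^{-1}]$, and the assignment is $\wt\Lambda_{\GSp(4)}$-linear.

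Applying $\bes^\dagger_{\bS,\bLam}$ to the second tensor factor of $\cH\cF_{\theta(\sB,\sB')}\in\cM_{\GSp(4),\ord}\otimes_{\wt\Lambda_{\GSp(4)}}\bI_{\theta(\sB,\sB')}$ produces an element of $\cM_{\GSp(4),\ord}\otimes_{\wt\Lambda_{\GSp(4)}}F_{\theta(\sB,\sB')}$ after inverting the $\bU_p$-eigenvalues in $F_{\theta(\sB,\sB')}$; dividing by $\cH$ yields the desired $\cF_{\theta(\sB,\sB'),\bS,\bLam}$. To verify the interpolation at a point $x$ not a pole of $\cH$: because specialization commutes with taking Fourier coefficients of Hida families, $\bes^\dagger_{\bS,\bLam}(\cG)$ specializes at $(\tau_1,\tau_2)$ to $\bes^\dagger_{\bS,\Lambda}$ applied to the classical form underlying the specialization of $\cG$, using that $\bLam$ specializes to $\Lambda$ by the construction in \S\ref{sec:setup}. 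Combining this with the interpolation formula $\cF_{\theta(\sB,\sB')}(x_2)=2^{-1}i^{-l_1-l_2+1}\bfP(h_x,h_x)\sum_\varphi\varphi\boxtimes\varphi/\bfP(\varphi,\varphi)$ from Proposition~\ref{prop:modpLf} (which has no pole at $x$ thanks to $\cH$) produces the claimed value.

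The main technical obstacle is the $m_1,m_2$-independence in the definition of $\bes^\dagger_{\bS,\bLam}$. This reduces to the cyclotomic statement \cite[Proposition~2.7.1]{pFL}: the argument there relies only on how the $\bS$-Fourier coefficient transforms under translation by $\mr{diag}(p^{a_1},p^{a_2},p^{-a_1},p^{-a_2})$ relative to the $\bU_p$-action, a formal property preserved when passing from individual ordinary forms to Hida families. Once this is in hand, the remaining steps — convergence of the adelic integral, compatibility of specialization with Fourier coefficients, and matching against Proposition~\ref{prop:modpLf} — are routine.
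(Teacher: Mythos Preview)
Your overall strategy---build a $\wt{\Lambda}_{\GSp(4)}$-linear Bessel functional on $\cM_{\GSp(4),\ord}$ and apply it to the second factor of $\cH\cF_{\theta(\sB,\sB')}$---matches the paper's. But the sentence ``the integral converges as a finite sum because the quotient is compact modulo the open subgroup fixing both $\bLam$ and the Fourier coefficient'' is where the proposal breaks. There is no such open subgroup at $p$: the character $\bLam$ is $\Lambda_{\GSp(4)}^{\times}$-valued and its restriction to $\cO_{\cK,p}^{\times}$ does not factor through any fixed finite quotient, and likewise the $\bS$-Fourier coefficient of a Hida family, evaluated along $\imath_\bS(\fz_p)$ for $\fz_p\in\cO_{\cK,p}^{\times}$, carries the full $T^1_{\GSp(4)}(\bZ_p)$-action and is not invariant under any open subgroup. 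So for fixed $m_1,m_2$ the sum defining your $\bes^\dagger_{\bS,\bLam}(\cG)$ is genuinely infinite with no evident convergence in $\wt{\Lambda}_{\GSp(4)}$.

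The paper resolves this by \emph{coupling} the level with the $p$-translation: for each $n$ it forms a finite sum $\Theta_n$ over the ray class group of conductor $p^{n}$, with the $p$-shift taken to depth $n+c$ (so effectively $m_1=n+c$, $m_2=0$), reduced modulo $\rho_n$. The substance of the proof is then the compatibility $\rho_n(\Theta_{n+1})=\Theta_n$, established via the explicit Iwasawa decomposition of $\imath_\bS(1+p^{n+c}y\alphaS)$ which converts the extra sum over $y\in\bZ/p$ at level $n+1$ into a single $\bU_p$-operator. This computation is the Hida-family analogue of the $m$-independence in \cite[Proposition~2.7.1]{pFL}, but it is not a formal consequence of that classical statement: it requires the projective-system framework to even make sense of the object being shown independent. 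Your deferral of ``$m_1,m_2$-independence'' to the classical proposition therefore skips the step that actually constructs the element of $\cM_{\GSp(4),\ord}\otimes_{\wt{\Lambda}_{\GSp(4)}}\bI_{\theta(\sB,\sB')}$.
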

\begin{proof}
We follow the method in \cite[\S10.2]{HsiehYama} to construct the desired $\cF_{\theta(\sB,\sB'),\bS,\bLam}$ from the $\cF_{\theta(\sB,\sB')}$ in Proposition~\ref{prop:modpLf}. Fix $c\geq 0$ such that $p^c\alphaS\in \cO_{\cK}$ and an open compact subgroup $U^p_\cK\subset\bA^{\times,p}_\cK$ such that $\bLam$ factors through the quotient by $U^p_\cK$. Given a positive integer $n$, we let 
\begin{align*}
    U_{\cK,p,n}&=\bZ^\times_p (1+p^{n+c}\bZ_p\alphaS),
    &U_{\cK,n}&=U^p_\cK U_{\cK,p,n},
\end{align*}
and
\[
   \rho_n:\wt{\Lambda}_{\GSp(4)}\lra  \cO\big[\,T^1_{\GSp(4)}(\bZ/p^n\bZ_)\,\big]
\]
be the natural projection induced by $T^1_{\GSp(4)}(\bZ_p)\ra T^1_{\GSp(4)}(\bZ/p^n\bZ)$. Put
\begin{align*}
   \bI_{\theta(\sB,\sB'),n}&=\bI_{\theta(\sB,\sB')}\otimes_{\wt{\Lambda}_{\GSp(4)},\rho_n} \cO\big[\,T^1_{\GSp(4)}(\bZ/p^n\bZ)\,\big].
\end{align*}
Then $\rho_n$ naturally induces $\rho_n:\bI_{\theta(\sB,\sB')}\ra \bI_{\theta(\sB,\sB'),n}$. Taking the $q$-expansion of the second factor at $\begin{bmatrix}A\\&D\end{bmatrix}\in \GSp(4,\bA_{\bQ,f})$ and taking the coefficient indexed by $\bS$ gives a map
\[
   \varepsilon_{\qexp,\bS}\left(\,\cdot\,, \begin{bmatrix}A\\&D\end{bmatrix}\right):\cM_{\GSp(4),\ord}\otimes_{\wt{\Lambda}_{\GSp(4)}}\cM_{\GSp(4),\ord}\lra \cM_{\GSp(4)}.
\]
Define $\Theta_n\in \cM_{\GSp(4),\ord}\otimes_{\wt{\Lambda}_{\GSp(4)}} \bI_{\theta(\sB,\sB'),n}$ as
\begin{align*}
   \Theta_n=
   \hspace{-1em}\sum_{\fz\in \cK^\times\bA^\times_{\bQ,f}\backslash\bA^\times_{\cK,f}/U_{\cK,n}} \hspace{-1em}\rho_n\left(\lambda^{-n-c}\cdot\bLam(\fz)^{-1} \,\varepsilon_{\qexp,\bS}\left(\cH \cF_{\theta(\sB,\sB')},\begin{bsm}\imath_\bS(\fz)\\&\ltrans{\imath_\bS(\bar{\fz})}\end{bsm}\begin{bsm}p^{2}\\&p\\&&1\\&&&p\end{bsm}^{n+c}_p\right)\right)[\fz],
\end{align*}
with $\lambda$ equal to the product of $\omega_{\sB,p}(p)$ and the eigenvalue of the $\bU_p$-operator associated to $\begin{bsm}p\\&1\\&&p^{-1}\\&&&1\end{bsm}$ corresponding to $\theta(\sB,\sB')$.
For $y\in\bZ_p$, we have
\[
    \imath_\bS(1+p^{n+c}y\alphaS)
    =\begin{bsm}1&p^{n+c}y\\&1\\ &&1\\&&-p^{n+c}y&1\end{bsm}\begin{bsm}(1+p^{n+c}y\alphaS)(1+p^{n+c}y\alphabS)\\-p^{n+c}y\alphaS\alphabS&1\\ &&1&p^{n+c}y\alphaS\alphabS\\ && &(1+p^{n+c}y\alphaS)(1+p^{n+c}y\alphabS)\end{bsm}
\]
and
\begin{align*}
   &\rho_n\left(\varepsilon_{\qexp,\bS}\left(\sum_{y\in\bZ/p} \cH\cF_{\theta(\sB,\sB')},\begin{bsm}\imath_\bS(\fz(1+p^{n+c}\alphaS y))\\&\ltrans{\imath_\bS}(\bar{\fz}(1+p^{n+c}\alphabS y))\end{bsm}\begin{bsm}p^{2}\\&p\\&&1\\&&&p\end{bsm}^{n+1+c}_p\right)\right)\\
   =&\rho_n\left(\varepsilon_{\qexp,\bS}\left(\sum_{y\in\bZ/p} \cH\cF_{\theta(\sB,\sB')},\begin{bsm}\imath_\bS(\fz)\\&\ltrans{\imath_\bS(\bar{\fz})}\end{bsm}\begin{bsm}1&p^{n+c}y\\&1\\ &&1\\&&-p^{n+c}y&1\end{bsm}\begin{bsm}p^{2}\\&p\\&&1\\&&&p\end{bsm}^{n+1+c}_p\right)\right)\\
   =&\rho_n\left(\varepsilon_{\qexp,\bS}\left(\sum_{y\in\bZ/p} \cH\cF_{\theta(\sB,\sB')},\begin{bsm}\imath_\bS(\fz)\\&\ltrans{\imath_\bS(\bar{\fz})}\end{bsm}\begin{bsm}p^{2}\\&p\\&&1\\&&&p\end{bsm}^{n+c}_p\begin{bsm}1&y\\&1\\ &&1\\&&-y&1\end{bsm}\begin{bsm}p^{2}\\&p\\&&1\\&&&p\end{bsm}_p\right)\right)\\
   =&\rho_n\left(\lambda_{\GSp(4),2,1}\cdot \varepsilon_{\qexp,\bS}\left( \cH\cF_{\theta(\sB,\sB')},\begin{bsm}\imath_\bS(\fz)\\&\ltrans{\imath_\bS(\bar{\fz})}\end{bsm}\begin{bsm}p^{2}\\&p\\&&1\\&&&p\end{bsm}^{n+c}_p\right)\right),
\end{align*}
from which it follows that
\[
    \rho_n(\Theta_{n+1})=\Theta_n.
\]
Therefore, the $\Theta_n$'s define an element $\Theta\in \cM_{\GSp(4),\ord}\otimes_{\wt{\Lambda}_{\GSp(4)}}\bI_{\theta(\sB,\sB')}$. Notice that for ordinary $\varphi$ invariant under $\{g\in\GSp(4,\bZ_p):g\mod p^n \in U_{\GSp(4)}(\bZ/p^n)\}$, $\bes^\dagger_{\bS,\Lambda}(\varphi)$ can be computed with $m_1=n+c,m_2=0$ in \eqref{eq:bes-dagger}. Then from the definition of $\Theta_n$'s, we see that up to a scalar, $\cH^{-1}\Theta$ gives the desired $\cF_{\theta(\sB,\sB'),\bS,\bLam}$.

\end{proof}

\begin{prop}\label{prop:compare}
Suppose that $\sC_1$ is primitive and $\bm{f}\in \cM_{\GL(2),\ord}$ is the Hida family corresponding to $\sC_1$ normalized such that the first Fourier coefficient is $1$. Then
\begin{equation}\label{eq:compare}
   \mu^S_{\sC_1,\theta(\sB,\sB')} 
   =\bbc\sqrt{\det\bS}\,2^{-3}i^{-1} {\bm f}_{\bbc}{\bm f}\cdot \cL^{S,\ast}_{\sB,\sC_1}\cL^{S,\ast}_{\sC_1,\sB'} \cdot \cF_{\theta(\sB,\sB'),\bS,\bLam}
\end{equation}
\end{prop}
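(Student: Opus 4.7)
The plan is to verify the asserted identity of $p$-adic measures by comparing both sides at a $p$-adically dense set of classical arithmetic points and then invoking the continuity of $p$-adic measures. So first I would take a classical point $(\kappa,x)$ with $x=(x_1,x_2)\in\sC_1(\bar\bQ_p)\times\theta(\sB,\sB')(\bar\bQ_p)$ lying over a classical tuple $(\kappa,\tau,\tau_1,\tau_2)$. Writing $x_2$ as the image of $(y_1,y_2)\in\sB(\bar\bQ_p)\times\sB'(\bar\bQ_p)$ under $\bI_\sB\hat\otimes\bI_{\sB'}\to\bI_{\theta(\sB,\sB')}$, the primitivity of $\sC_1$ forces $\bm{f}_\bbc\bm{f}$ to specialize at $x_1$ to $f_\bbc\cdot f$ with $f\in\pi_{x_1}$ the normalized ordinary eigenform. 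Combining the interpolation formulas of Theorem~\ref{thm:Rankin} (applied twice) and Proposition~\ref{prop:intBes}, the $\bfP(h_x,h_x)$-factors cancel and the RHS reduces to an explicit constant times
\[
\frac{f_\bbc\, f}{\bfP(f,f)}\cdot D^S_1\cdot D^S_2\cdot\sum_{\varphi\in\sS_{\GSp(4),x_2}} \frac{\bes^\dagger_{\bS,\Lambda}(\varphi)\,\varphi}{\bfP(\varphi,\varphi)},
\]
where $D^S_j$ denotes the $D^S$-value for $\sigma_{y_j}\times\pi_{x_1}$ at $s=k+(l+l_1+l_2)/2$ with the twist by $\omega_{\sigma_{y_1}}^{1/2}\omega_{\sigma_{y_2}}^{-1/2}$ appearing in the second factor.

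Next I would expand the LHS using Theorem~\ref{thm:pFL1}, which yields the same outer shape but with a single degree-$8$ contribution $i^{r_{\sLambda,1}-r_{\sLambda,2}}\,I_\infty\cdot E_p\cdot L^S$ for $\theta(\sigma_{y_1},\sigma_{y_2})\times\pi_{x_1}\times\chi$. The algebraic parts then match via the classical factorization of the spin $L$-function of a Yoshida lift,
\[
L^S(s,\theta(\sigma_{y_1},\sigma_{y_2})\times\pi_{x_1}\times\chi)=L^S(s,\sigma_{y_1}\times\pi_{x_1}\times\chi)\,L^S(s,\sigma_{y_2}\times\pi_{x_1}\times\omega_{\sigma_{y_1}}^{1/2}\omega_{\sigma_{y_2}}^{-1/2}\chi),
\]
together with the corresponding factorization of the modified Euler factor $E_p$, which is read off directly from \eqref{eq:Ep} and the explicit description of the Satake parameters and $p$-nebentypus of the Yoshida lift (as recorded in \S\ref{sec:pL}), matched against the parameters of $\sigma_{y_1},\sigma_{y_2}$.

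The crux, and the main obstacle, is then the archimedean identity relating $i^{r_{\sLambda,1}-r_{\sLambda,2}}\,I_\infty(k,\cD_{l_1,l_2},\cD_l,\Lambda_\infty)$ to an explicit constant multiple of $E_\infty(s,\sigma_{y_1}\times\pi_{x_1}\times\chi)\,E_\infty(s,\sigma_{y_2}\times\pi_{x_1}\times\omega_{\sigma_{y_1}}^{1/2}\omega_{\sigma_{y_2}}^{-1/2}\chi)$. Rather than compute $I_\infty$ directly, I plan to establish this identity by a seesaw argument at the archimedean place: the Furusawa integral defining $I_\infty$, restricted along $\GL(2)\times\GL(2)\hookrightarrow\GSp(4)$, factors into a product of two $\GL(2)\times\GL(2)$ archimedean Rankin-Selberg integrals, exploiting that $\cD_{l_1,l_2}|_{\GL(2)\times\GL(2)}$ contains the unique holomorphic discrete-series summand $\cD_{l_1+l_2-2}\boxtimes\cD_{l_1-l_2+2}$ that gives rise to the Yoshida lift. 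Each archimedean Rankin-Selberg integral can then be computed by the classical archimedean zeta integral formula, yielding the corresponding $E_\infty$-factor. Careful bookkeeping of the constants in the seesaw identification and of their interaction with the choices of test sections made in \cite{pFL} is the hardest part; once it is done, $p$-adic density of classical points combined with the continuity of both measures upgrades the pointwise identity to the asserted identity of measures, and as a by-product supplies the explicit formula for $I_\infty$ needed to complete the interpolation formula in Theorem~\ref{thm:main}.
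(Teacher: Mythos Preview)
Your reduction to comparing specializations at classical points and your handling of the $L$-function and $E_p$ factorizations for Yoshida lifts are fine, but the proof has a genuine gap at exactly the point you flag as ``the crux'': the archimedean identity for $I_\infty(k,\cD_{l_1,l_2},\cD_l,\Lambda_\infty)$ at general weights $(l_1,l_2,l)$. Your proposed seesaw argument is not substantiated, and in fact there is no obvious reason the Furusawa local integral (which is over $R'_\bS\backslash(\GSp(4)\times_{\GL(1)}\GU(1,1))$ against a section pulled back from $\GU(3,3)$) should factor along $\GL(2)\times\GL(2)\hookrightarrow\GSp(4)$ into two Rankin--Selberg integrals; the Bessel subgroup and the $\GU(3,3)$ embedding do not interact with that splitting in any evident way. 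You are essentially proposing to compute $I_\infty$ directly at vector weights, which is precisely the difficult archimedean computation the paper is designed to \emph{avoid}.

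The paper's proof sidesteps this entirely by checking the identity only at classical points with $l_1=l_2=l$ (scalar weight). At such points the \emph{second} interpolation formula in Theorem~\ref{thm:pFL1} applies, where $I_\infty$ has already been evaluated (in \cite{pFL}) and the LHS is expressed directly in terms of $D^S(k+\tfrac{3l}{2},\Pi_x\times\pi_x\times\chi)$. The RHS is computed from Propositions~\ref{prop:modpLf} and~\ref{prop:intBes}, and the two sides match by the factorization $D^S(s,\Pi_x\times\pi_x\times\chi)=D^S(s,\sigma_x\times\pi_x\times\chi)\,D^S(s,\pi_x\times\sigma'_x\times\omega^{1/2}_{\sigma_x}\omega^{-1/2}_{\sigma'_x}\chi)$. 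Since for each fixed integer $l\ge 3$ the arithmetic points with algebraic weights $(l,l,l)$ and independently varying finite-order parts $(\xi,\xi_1,\xi_2)$ are already Zariski dense in the three-dimensional weight space, this suffices. The formula for $I_\infty$ at general $(l_1,l_2,l)$ is then a \emph{consequence} of the proposition (deduced in the proof of Theorem~\ref{thm:main}), not an input to it---so your strategy also inverts the logical flow of the paper.
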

\begin{proof}
It suffices to check that in the setting of Theorem~\ref{thm:pFL1} with $\sC_2=\theta(\sB,\sB')$, the evaluations of both sides agree at all $(\kappa,x)$ with $l_1=l_2=l$.  It follows from Propositions~\ref{prop:modpLf},\ref{prop:intBes} that
\begin{align*}
    \text{RHS}(\kappa,x)
   =&\,\bbc\sqrt{\det\bS}\,2^{-3}i^{-1} f_{x,\bbc}f_x\cdot 2^{-3l-1}i^{l+1}\\
   &\times\frac{D^S\left(k+\frac{3l}{2},\sigma_x\times\pi_x\times\chi\right) D^S\Big(k+\frac{3l}{2},\pi_x\times\sigma'_x\times\omega^{\frac{1}{2}}_{\sigma_x}\omega^{-\frac{1}{2}}_{\sigma'_x}\times\chi\Big)}{\bfP(f_x,f_x)}
   \hspace{-1em} \sum_{\varphi\in \sS_{\GSp(4),x}}\frac{\bes^\dagger_{\bS,\Lambda}(\varphi)\varphi}{\bfP(\varphi,\varphi)},
\end{align*}
where $f_x$ denotes the specialization of ${\bm f}$ at $x$. Note that when $\Pi_x=\theta(\sB,\sB')_x$, the $L$-function for $\Pi_x$ decomposes as the product of $L$-functions for $\sigma_x$ and $\sigma'_x\otimes  \omega^{\frac{1}{2}}_{\sigma_x}\omega^{-\frac{1}{2}}_{\sigma'_x}\circ\det$, and we have
\[
    D^S(s,\Pi_x\times\pi_x\times\chi)=D^S\left(s,\sigma_x\times\pi_x\times\chi\right) D^S\Big(s,\pi_x\times\sigma'_x\times\omega^{\frac{1}{2}}_{\sigma_x}\omega^{-\frac{1}{2}}_{\sigma'_x}\times\chi\Big).
\]
Hence,
\begin{align*}
   \text{RHS}(\kappa,x)=&\,\bbc\sqrt{\det\bS}\, 2^{-3l-4} i^{l} \cdot D^S\left(k+\frac{3l}{2},\Pi_x\times\pi_x\times\chi\right)\cdot  \frac{f_{x,\bbc}f_x}{\bfP(f_x,f_x)}\sum_{\varphi\in \sS_{\GSp(4),x}}\frac{\bes^\dagger_{\bS,\Lambda}(\varphi)\varphi}{\bfP(\varphi,\varphi)}
\end{align*}
which equals exactly $\mu^S_{\sC_1,\theta(\sB,\sB')}$ by the formula in Theorem~\ref{thm:pFL1}.
\end{proof}

\subsection{The four-variable $p$-adic $L$-function and its interpolation formula II}
With Proposition~\ref{prop:compare}, we can deduce a formula for the archimedean zeta $I_\infty(k,\cD_{l_1,l_2},\cD_l,\Lambda_\infty)$ appearing in the interpolation formula in Theorem~\ref{thm:pFL1} and finish the proof of Theorem~\ref{thm:main}.

\begin{proof}[Proof of Theorem~\ref{thm:main}.]
We choose $K^p_{\GL(2)},K^p_{\GSp(4)}$ and primitive Hida families $\sB,\sB'$ of tame level $K^p_{\GL(2)}$ such that $\Spec(\bT_{\GSp(4),\ord})$ has an irreducible component $\theta(\sB,\sB')$. With such a choice, we can further choose $\bS$ and $\bLam$ such that $\cF_{\theta(\sB,\sB'),\bS,\bLam}\neq 0$. (By the interpolation property of $\cF_{\theta(\sB,\sB'),\bS,\bLam}$ in Proposition~\ref{prop:intBes}, to show the existence of such a $\bS$ and $\bLam$, it suffices to show that there exists $x$ satisfying the conditions there for which $B^\dagger_{\bS,\Lambda}(\varphi)\neq 0$ for some $\varphi\in \sS_{\GSp(4),x}$. Take an $x$ with corresponding weight $(l_1,l_2)$, $l_1\gg l_2\gg 0$ and $\varphi\in \sS_{\GSp(4),x}$. One can choose $\bS,\Lambda$ such that the usual Bessel period $B_{\bS,\Lambda}(\varphi)\neq 0$. Then by \cite[Proposition~2.7.1]{pFL}, we know that  $B^\dagger_{\bS,\Lambda}(\varphi)\neq 0$.) Then both sides of the identity in Proposition~\ref{prop:compare} are nonzero elements in $\cM_{\GSp(4),\ord}\otimes_{\wt{\Lambda}_{\GSp(4)}} F_{\theta(\sB,\sB')}$. (There are many interpolation points corresponding to $s$ belonging to the absolute convergence range, at which one can check that the evaluations are nonzero.)

Let $(\kappa,x)$ be a point of $\Hom_\cont\left(\bQ^\times\backslash\bA^\times_{\bQ,f}/U^p,\bar{\bQ}^\times_p\right)\times\sC_1(\bar{\bQ}_p)\times \theta(\sB,\sB')(\bar{\bQ}_p)$ as in Theorem~\ref{thm:pFL1} with $\sC_2=\theta(\sB,\sB')$. Then
\begin{align*}
\text{RHS}(\kappa,x)
   =&\,\bbc\sqrt{\det\bS}\,2^{-3}i^{-1} f_{x,\bbc}f_x\cdot 2^{-l-l_1-l_2-1}i^{l+1}
    \sum_{\varphi\in \sS_{\GSp(4),x}}\frac{\bes^\dagger_{\bS,\Lambda}(\varphi)\varphi}{\bfP(\varphi,\varphi)} \\
   &\times\frac{D^S\left(k+\frac{l+l_2+l_2}{2},\sigma_x\times\pi_x\times\chi\right) D^S\Big(k+\frac{l+l_1+l_2}{2},\pi_x\times\sigma'_x\times\omega^{\frac{1}{2}}_{\sigma_x}\omega^{-\frac{1}{2}}_{\sigma'_x}\times\chi\Big)}{\bfP(f_x,f_x)} 
\end{align*}
by the interpolation properties of $\cL^{S,\ast}_{\sB,\sC_1},\cL^{S,\ast}_{\sC_1,\sB'}, \cF_{\theta(\sB,\sB'),\bS,\bLam}$, and
\begin{align*}
   {\rm LHS}(\kappa,x)
   =&\, i^{r_{\sLambda,1}-r_{\sLambda,2}} f_{x,\bbc}f_x \cdot\frac{I_\infty(k,\cD_{l_1,l_2},\cD_l,\Lambda_\infty)}{E_\infty\Big(k+\frac{l+l_1+l_2}{2},\cD_{l_1,l_2}\times\cD_l\times\chi\Big)}\sum_{\varphi\in \sS_{\GSp(4),x}}\frac{\bes^\dagger_{\bS,\Lambda}(\varphi)\varphi}{\bfP(\varphi,\varphi)} \\
   &\times\frac{D^S\left(k+\frac{l+l_2+l_2}{2},\sigma_x\times\pi_x\times\chi\right) D^S\Big(k+\frac{l+l_1+l_2}{2},\pi_x\times\sigma'_x\times\omega^{\frac{1}{2}}_{\sigma_x}\omega^{-\frac{1}{2}}_{\sigma'_x}\times\chi\Big)}{\bfP(f_x,f_x)} 
\end{align*}
by Theorem~\ref{thm:pFL1}. It follows that
\begin{equation}\label{eq:arch-zeta}
   I_\infty(k,\cD_{l_1,l_2},\cD_l,\Lambda_\infty)= \bbc\sqrt{\det\bS}  \,2^{-l-l_1-l_2-4}i^{l-r_{\sLambda,1}+r_{\sLambda,2}} \cdot E_\infty\Big(k+\frac{l+l_1+l_2}{2},\cD_{l_1,l_2}\times\cD_l\times\chi\Big),
\end{equation}
for all $(k,l,l_1,l_2)$ which equals the algebraic part of the projection of an arithmetic $(\kappa,x)$ to the weight space such that $x$ is not a pole of either side of\eqref{eq:compare} and ${\rm RHS}(\kappa,x)$ and ${\rm LHS}(\kappa,x)$ are nonzero.

Since we have made the choices such that both sides of \eqref{eq:compare} are nonzero, the points for which the weight projection is not \'{e}tale at $x$ or ${\rm LHS}(\kappa,x)={\rm LHS}(\kappa,x)=0$ are not Zariski dense. For any $(k,l,l_1,l_2)$ satisfying \eqref{eq:kl}, the classical points $(\kappa,x)$ whose projections to the weight space has algebraic part equal to $(k,l,l_1,l_2)$ are Zariski dense, so there exist $(\kappa,x)$ which satisfies the conditions for the above comparison to deduce \eqref{eq:arch-zeta} for the given $(k,l,l_1,l_2)$. Thus, \eqref{eq:arch-zeta} is true for all $(k,l,l_1,l_2)$ satisfying \eqref{eq:kl}. 

Plugging it into the interpolation formula in Theorem~\ref{thm:pFL1} shows that 
\[
   \cL^S_{\sC_1,\sC_2,\beta_1,\beta_2}=(\bbc\sqrt{\det\bS})^{-1}2^4\cdot \varepsilon_{\qexp,\beta_1,\beta_2}\left(\mu^S_{\sC_1,\sC_2}\right)
\]
is the desired $p$-adic $L$-function.
\end{proof}

\begin{rmk}
With \eqref{eq:kl}, the interpolation formula for the one-variable cyclotomic $p$-adic $L$-function $\cL^S_{\Pi,\pi}$ in \cite[Theorem~1.0.1]{pFL} becomes
\begin{align*}
   &\cL^S_{\Pi,\pi}\left((\chi|\bdot|^k)_{p_\adic}\right)
   = \,\bbc\sqrt{\det\bS}\, 2^{-l-l_1-l_2-4}i^{l-r_{\sLambda,1}+r_{\sLambda,2}}\cdot \frac{\bes^\dagger_{\bS,\Lambda}\left(\varphi_{\ord}\right)
   \,\whi_\bbc(f_\ord)}{ \bfP(\varphi_\ord,\varphi_\ord)\,\bfP(f_\ord,f_\ord)} \\[1ex]
   &\times 
   \left\{\begin{array}{ll}
    E_\infty\left(k,\tilde{\Pi}\times\tilde{\pi}\times\chi\right)
   E_p\left(k,\tilde{\Pi}\times\tilde{\pi}\times\chi\right)
   \cdot  L^S\left(k,\tilde{\Pi}\times\tilde{\pi}\times\chi\right), &\text{$l_1+l_2+l$ even},\\[2ex] 
    E_\infty\left(k+\frac{1}{2},\tilde{\Pi}\times\tilde{\pi}\times\chi\right)
   E_p\left(k+\frac{1}{2},\tilde{\Pi}\times\tilde{\pi}\times\chi\right)
   \cdot  L^S\left(k+\frac{1}{2},\tilde{\Pi}\times\tilde{\pi}\times\chi\right), &\text{$l_1+l_2+l$ odd},
   \end{array}\right.
\end{align*}
with $(k,\chi)$ satisfying the conditions in {\it loc.cit}.

\end{rmk}

\bibliographystyle{halpha}
\bibliography{BiStd}

\begin{thebibliography}{LPSZ21}

\bibitem[Aga07]{Agarwal}
Mahesh~K. Agarwal.
\newblock {\em p-adic {L}-functions for {GS}p(4) x {GL}(2)}.
\newblock ProQuest LLC, Ann Arbor, MI, 2007.
\newblock Thesis (Ph.D.)--University of Michigan.

\bibitem[CH20]{Chen-Hsieh}
Shih-Yu Chen and Ming-Lun Hsieh.
\newblock On primitive {$p$}-adic {R}ankin-{S}elberg {$L$}-functions.
\newblock In {\em Development of {I}wasawa theory---the centennial of {K}.
  {I}wasawa's birth}, volume~86 of {\em Adv. Stud. Pure Math.}, pages 195--242.
  Math. Soc. Japan, Tokyo, [2020] \copyright 2020.

\bibitem[Coa91]{Coates}
John Coates.
\newblock Motivic {$p$}-adic {$L$}-functions.
\newblock In {\em {$L$}-functions and arithmetic ({D}urham, 1989)}, volume 153
  of {\em London Math. Soc. Lecture Note Ser.}, pages 141--172. Cambridge Univ.
  Press, Cambridge, 1991.

\bibitem[CPR89]{CoPerrin}
John Coates and Bernadette Perrin-Riou.
\newblock On {$p$}-adic {$L$}-functions attached to motives over {${\bf Q}$}.
\newblock In {\em Algebraic number theory}, volume~17 of {\em Adv. Stud. Pure
  Math.}, pages 23--54. Academic Press, Boston, MA, 1989.

\bibitem[Fur93]{Furusawa}
Masaaki Furusawa.
\newblock On {$L$}-functions for {${\rm GSp}(4)\times {\rm GL}(2)$} and their
  special values.
\newblock {\em J. Reine Angew. Math.}, 438:187--218, 1993.

\bibitem[Gar89]{GaKl}
Paul~B. Garrett.
\newblock Integral representations of {E}isenstein series and {$L$}-functions.
\newblock In {\em Number theory, trace formulas and discrete groups
  ({O}slo,1987)}, pages 241--264. Academic Press, Boston, MA, 1989.

\bibitem[Hid88]{Hi88}
Haruzo Hida.
\newblock A {$p$}-adic measure attached to the zeta functions associated with
  two elliptic modular forms. {II}.
\newblock {\em Ann. Inst. Fourier (Grenoble)}, 38(3):1--83, 1988.

\bibitem[Hid02]{HidaCon}
Haruzo Hida.
\newblock Control theorems of coherent sheaves on {S}himura varieties of {PEL}
  type.
\newblock {\em J. Inst. Math. Jussieu}, 1(1):1--76, 2002.

\bibitem[Hid04]{HidaPAF}
Haruzo Hida.
\newblock {\em {$p$}-adic automorphic forms on {S}himura varieties}.
\newblock Springer Monographs in Mathematics. Springer-Verlag, New York, 2004.

\bibitem[HN18]{HsiehNamiInner}
Ming-Lun Hsieh and Kenichi Namikawa.
\newblock Inner product formula for {Y}oshida lifts.
\newblock {\em Ann. Math. Qu\'{e}.}, 42(2):215--253, 2018.

\bibitem[HY19]{HsiehYama}
Ming-Lun Hsieh and Shunsuke Yamana.
\newblock Four-variable $p$-adic triple product {$L$}-functions and the trivial
  zero conjecture.
\newblock 2019.
\newblock \href{https://arxiv.org/abs/1906.10474}{arXiv:1906.10474}.

\bibitem[Kat73]{TDwork}
Nicholas Katz.
\newblock Travaux de {D}work.
\newblock pages 167--200. Lecture Notes in Math., Vol. 317, 1973.

\bibitem[KL64]{KuLeo}
Tomio Kubota and Heinrich-Wolfgang Leopoldt.
\newblock Eine {$p$}-adische {T}heorie der {Z}etawerte. {I}. {E}inf\"{u}hrung
  der {$p$}-adischen {D}irichletschen {$L$}-{F}unktionen.
\newblock {\em J. Reine Angew. Math.}, 214/215:328--339, 1964.

\bibitem[Liu19]{NHF}
Zheng Liu.
\newblock Nearly overconvergent {S}iegel modular forms.
\newblock {\em Ann. Inst. Fourier (Grenoble)}, 69(6):2439--2506, 2019.

\bibitem[Liu20]{SLF}
Zheng Liu.
\newblock {$p$}-adic {$L$}-functions for ordinary families on symplectic
  groups.
\newblock {\em J. Inst. Math. Jussieu}, 19(4):1287--1347, 2020.

\bibitem[Liu21]{AZI}
Zheng Liu.
\newblock The doubling archimedean zeta integrals for {$p$}-adic interpolation.
\newblock {\em Math. Res. Lett.}, 28(1):145--173, 2021.

\bibitem[Liu22]{pFL}
Zheng Liu.
\newblock $p$-adic $l$-functions for {$\mathrm{GSp}(4)\times\mathrm{GL}(2)$}.
\newblock 2022.
\newblock \href{https://arxiv.org/abs/2308.08533}{arXiv:2308.08533}.

\bibitem[LPSZ21]{LPSZ}
David Loeffler, Vincent Pilloni, Christopher Skinner, and Sarah~Livia Zerbes.
\newblock Higher {H}ida theory and {$p$}-adic {$L$}-functions for {${\rm
  GSp}_4$}.
\newblock {\em Duke Math. J.}, 170(18):4033--4121, 2021.

\bibitem[LR23]{LoRi}
David Loeffler and \'{O}scar Rivero.
\newblock On $p$ {$L$}-functions for {${\mr GSp}_4\times{\mr GL}_2$}.
\newblock 2023.
\newblock \href{https://arxiv.org/pdf/2305.07707}{arXiv:2305.07707}.

\bibitem[LZ21]{LoZe}
David Loeffler and Sarah~Livia Zerbes.
\newblock On the {B}loch--{K}ato conjecture for {${\mr GSp}(4) \times {\mr
  GL}(2)$}.
\newblock 2021.
\newblock \href{https://arxiv.org/pdf/2106.14511.pdf}{arXiv:2106.14511}.

\bibitem[Mor14]{Morimoto-GSp4GL2-I}
Kazuki Morimoto.
\newblock On {$L$}-functions for quaternion unitary groups of degree 2 and
  {$\rm GL(2)$} (with an appendix by {M}. {F}urusawa and {A}. {I}chino).
\newblock {\em Int. Math. Res. Not. IMRN}, (7):1729--1832, 2014.

\bibitem[Mor18]{Morimoto-GSp4GL2-II}
Kazuki Morimoto.
\newblock On tensor product {$L$}-functions for quaternion unitary groups and
  {$\rm GL(2)$} over totally real number fields: mixed weight cases.
\newblock {\em Adv. Math.}, 337:317--362, 2018.

\bibitem[Pit11]{Pitale}
Ameya Pitale.
\newblock Steinberg representation of {${\rm GSp}(4)$}: {B}essel models and
  integral representation of {$L$}-functions.
\newblock {\em Pacific J. Math.}, 250(2):365--406, 2011.

\bibitem[PS09]{PiSch}
Ameya Pitale and Ralf Schmidt.
\newblock Integral representation for {$L$}-functions for {${\rm
  GSp}_4\times{\rm GL}_2$}.
\newblock {\em J. Number Theory}, 129(6):1272--1324, 2009.

\bibitem[Rob01]{RobGlobalTheta}
Brooks Roberts.
\newblock Global {$L$}-packets for {${\rm GSp}(2)$} and theta lifts.
\newblock {\em Doc. Math.}, 6:247--314, 2001.

\bibitem[Sah09]{Saha}
Abhishek Saha.
\newblock {$L$}-functions for holomorphic forms on {${\rm GSp}(4)\times {\rm
  GL}(2)$} and their special values.
\newblock {\em Int. Math. Res. Not. IMRN}, (10):1773--1837, 2009.

\bibitem[Sah10]{Saha-pullback}
Abhishek Saha.
\newblock Pullbacks of {E}isenstein series from {$\rm GU(3,3)$} and critical
  {$L$}-values for {$\rm GSp(4)\times GL(2)$}.
\newblock {\em Pacific J. Math.}, 246(2):435--486, 2010.

\bibitem[Shi00]{Sh00}
Goro Shimura.
\newblock {\em Arithmeticity in the theory of automorphic forms}, volume~82 of
  {\em Mathematical Surveys and Monographs}.
\newblock American Mathematical Society, Providence, RI, 2000.

\end{thebibliography}
\end{document}